\newtheorem{theorem}{Theorem}[section]
\newtheorem{proposition}[theorem]{Proposition}
\newtheorem{lemma}[theorem]{Lemma}
\newtheorem{corollary}[theorem]{Corollary}
\theoremstyle{remark}
\newtheorem{remark}[theorem]{Remark}
\newtheorem{definition}{Definition}
\numberwithin{equation}{section}
\newcommand{\vep}{\varepsilon}
\newcommand{\R}{{\mathbb{R}}}
\newcommand{\C}{{\mathbb{C}}}
\newcommand{\Z}{{\mathbb{Z}}}
\newcommand{\T}{{\mathbb{T}}}
\newcommand{\into}{\operatorname{int}}
\newcommand{\supp}{\operatorname{supp}}
\newcommand{\rp}{\operatorname{Re}}
\newcommand{\ip}{\operatorname{Im}}
\begin{document}

\title[Recurrence for smooth curves]
{Recurrence for smooth curves in the moduli space and application to the billiard flow on nibbled ellipses}
\author[K.\ Fr\k{a}czek]{Krzysztof Fr\k{a}czek}
\address{Faculty of Mathematics and Computer Science, Nicolaus
Copernicus University, ul. Chopina 12/18, 87-100 Toru\'n, Poland}
\email{fraczek@mat.umk.pl}




\subjclass[2000]{37A10, 37D40, 37E35}
\keywords{Billiard flows, unique ergodicity, the moduli space of translation surfaces, recurrent points of the Teichm\"uller flow}
\thanks{Research partially supported by the Narodowe Centrum Nauki Grant
2017/27/B/ST1/00078}
\maketitle
\begin{abstract}
In view of classical results of Masur and Veech almost every element in the moduli space of compact translation surfaces
is recurrent, { i.e.\ its Teichm\"uller positive semiorbit returns to a compact subset infinitely many times.}
In this paper we focus on the problem of recurrence for elements of smooth curves in the moduli space.
We give an effective criterion for the recurrence of almost every element of a smooth curve. The criterion relies on
results developed by Minsky-Weiss in \cite{Mi-We}. Next we apply the criterion to the billiard flow on planar tables
confined by arcs of confocal conics. The phase space of such billiard flow splits into invariant subsets determined by caustics.
We prove that for almost every caustic the billiard flow restricted to the corresponding invariant set is uniquely ergodic.
This answers affirmatively to a question raised by Zorich.
\end{abstract}

\section{Billiards on elliptical-hyperbolic nibbled tables}
We consider a class of pseudo-integrable  billiards with piecewise elliptic and hyperbolic  boundary
introduced by Dragovi\'c and Radnovi\'c in \cite{Dra-Ra}.
Let $0<b<a$ and  denote by $\{\mathcal{C}_{\lambda}: \lambda\leq a\}$ the
family of confocal conics
\[\frac{x^2}{a-\lambda}+\frac{y^2}{b-\lambda}=1.\]
If $ \lambda< b$  then $\mathcal{C}_{\lambda}$ is an ellipse and
if $ b<\lambda< a$  then $\mathcal{C}_{\lambda}$ is a hyperbola.
Moreover, $\mathcal{C}_{b}$ is the horizontal and $\mathcal{C}_{a}$ is the vertical line though the origin.

Denote by $\Theta$ the set of sequences $(\overline{\alpha},\overline{\beta})=((\alpha_i)_{i=1}^{k},(\beta_i)_{i=1}^k)$ such that
\[a=\alpha_{0}> \alpha_{1}>\alpha_{2}>\ldots>\alpha_{k-1}>\alpha_k=b>\beta_k>\beta_{k-1}>\ldots>\beta_2>\beta_1\geq \beta_0=0. \]
Let $k(\overline{\alpha},\overline{\beta}):=k$. For every $(\overline{\alpha},\overline{\beta})\in\Theta$ let $\mathcal{D}_{\overline{\alpha},\overline{\beta}}$ be the billiard
table in the ellipse $\mathcal{C}_{0}$ so that the boundary of $\mathcal{D}_{\overline{\alpha},\overline{\beta}}$ contained in the positive quadrant is piecewise smooth and consists of a chain of arcs of ellipses $\mathcal{C}_{\beta_1},\ldots, \mathcal{C}_{\beta_k}$, hyperbolae  $\mathcal{C}_{\alpha_1},\ldots, \mathcal{C}_{\alpha_{k-1}}$ and lines $\mathcal{C}_{a},\mathcal{C}_{b}$. More precisely, the consecutive corners are intersections of the following pairs of conics:
\[(\mathcal{C}_{a},\mathcal{C}_{\beta_1}),(\mathcal{C}_{\alpha_1},\mathcal{C}_{\beta_1}),(\mathcal{C}_{\alpha_1},\mathcal{C}_{\beta_2}),
(\mathcal{C}_{\alpha_2},\mathcal{C}_{\beta_2}),\ldots
, (\mathcal{C}_{\alpha_{k-1}},\mathcal{C}_{\beta_{k-1}}),(\mathcal{C}_{\alpha_{k-1}},\mathcal{C}_{\beta_{k}}), (\mathcal{C}_{b},\mathcal{C}_{\beta_k}).\]
The positive quadrant of $\mathcal{D}_{\overline{\alpha},\overline{\beta}}$ looks like stairs whose steps are elliptical-hyperbolic, see Figure~\ref{fig:Dalphabeta}.
\begin{figure}[h]
\includegraphics[width=0.4\textwidth]{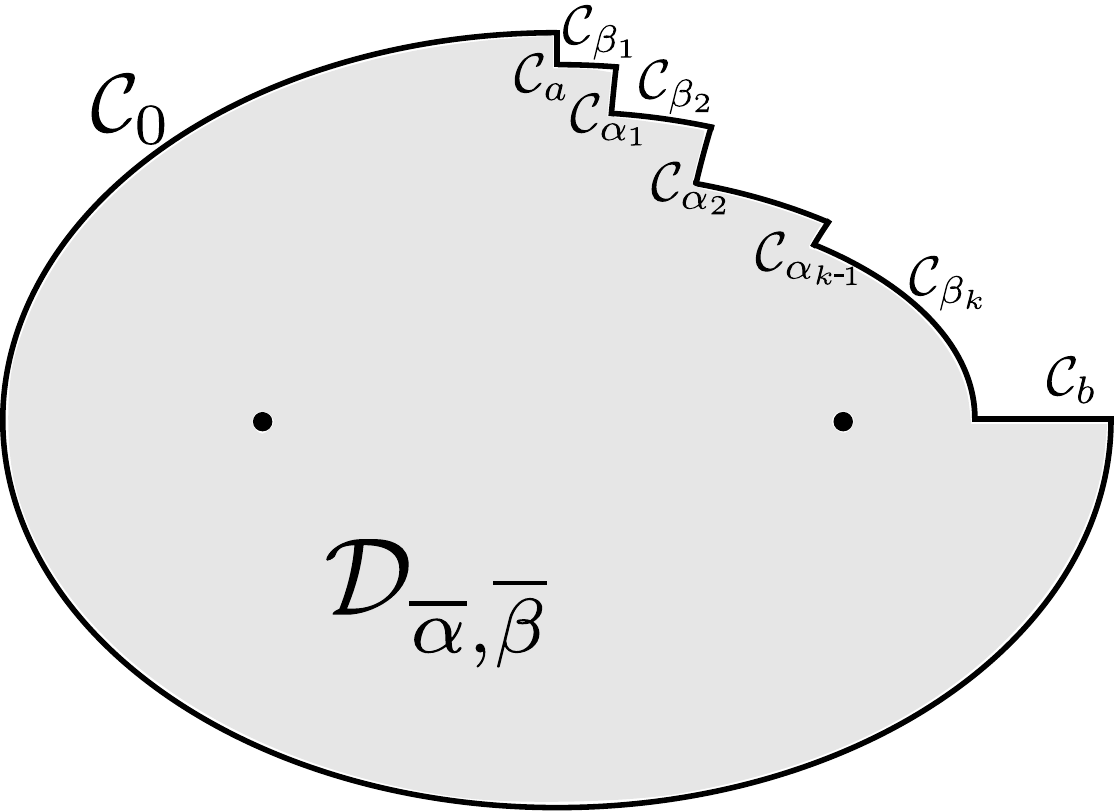}
\caption{The shape of the table $\mathcal{D}_{\overline{\alpha},\overline{\beta}}$.}\label{fig:Dalphabeta}
\end{figure}

Let $(\overline{\alpha}^{++},\overline{\beta}^{++})$, $(\overline{\alpha}^{+-},\overline{\beta}^{+-})$, $(\overline{\alpha}^{-+},\overline{\beta}^{-+})$, $(\overline{\alpha}^{--},\overline{\beta}^{--})$ be sequences in
$\Theta$ such that
\[\beta^{++}_1=\beta^{-+}_1,\ \beta^{+-}_1=\beta^{--}_1,\ \beta^{++}_{k(\overline{\alpha}^{++},
\overline{\beta}^{++})}=\beta^{+-}_{k(\overline{\alpha}^{+-},\overline{\beta}^{+-})},\
\beta^{-+}_{k(\overline{\alpha}^{-+},
\overline{\beta}^{-+})}=\beta^{--}_{k(\overline{\alpha}^{--},\overline{\beta}^{--})}.\]
Let
\[\beta^t:=\beta^{++}_1=\beta^{-+}_1,\quad\beta^b:=\beta^{+-}_1=\beta^{--}_1,\]
\[\beta^r:=\beta^{++}_{k(\overline{\alpha}^{++},
\overline{\beta}^{++})}=\beta^{+-}_{k(\overline{\alpha}^{+-},\overline{\beta}^{+-})},\quad
\beta^l:=\beta^{-+}_{k(\overline{\alpha}^{-+},
\overline{\beta}^{-+})}=\beta^{--}_{k(\overline{\alpha}^{--},\overline{\beta}^{--})}.\]
Denote by $\gamma_v, \gamma_h:\R^2\to\R^2$ the reflections across the vertical and the horizontal coordinate axis respectively.
For the quadruple $(\overline{\alpha}^{++},\overline{\beta}^{++})$, $(\overline{\alpha}^{+-},\overline{\beta}^{+-})$, $(\overline{\alpha}^{-+},\overline{\beta}^{-+})$, $(\overline{\alpha}^{--},\overline{\beta}^{--})$ let
\begin{align}\label{tableD}
\begin{aligned}
\mathcal{D}&=\mathcal{D}^{(\overline{\alpha}^{++},\overline{\beta}^{++})(\overline{\alpha}^{+-},\overline{\beta}^{+-})}_{ (\overline{\alpha}^{-+},\overline{\beta}^{-+})(\overline{\alpha}^{--},\overline{\beta}^{--})}\\:&=
\mathcal{D}_{\overline{\alpha}^{++},\overline{\beta}^{++}}\cap
\gamma_h\mathcal{D}_{\overline{\alpha}^{+-},\overline{\beta}^{+-}}\cap
\gamma_v\mathcal{D}_{\overline{\alpha}^{-+},\overline{\beta}^{-+}}
\cap\gamma_h\circ\gamma_v\mathcal{D}_{\overline{\alpha}^{--},\overline{\beta}^{--}}.
\end{aligned}
\end{align}
Then every quadrant of $\mathcal{D}$ looks like stairs whose steps are elliptical-hyperbolic, see Figure~\ref{fig:Dgen}.
We call the table $\mathcal D$ a \emph{nibbled ellipse}.
\begin{figure}[h]
\includegraphics[width=0.4\textwidth]{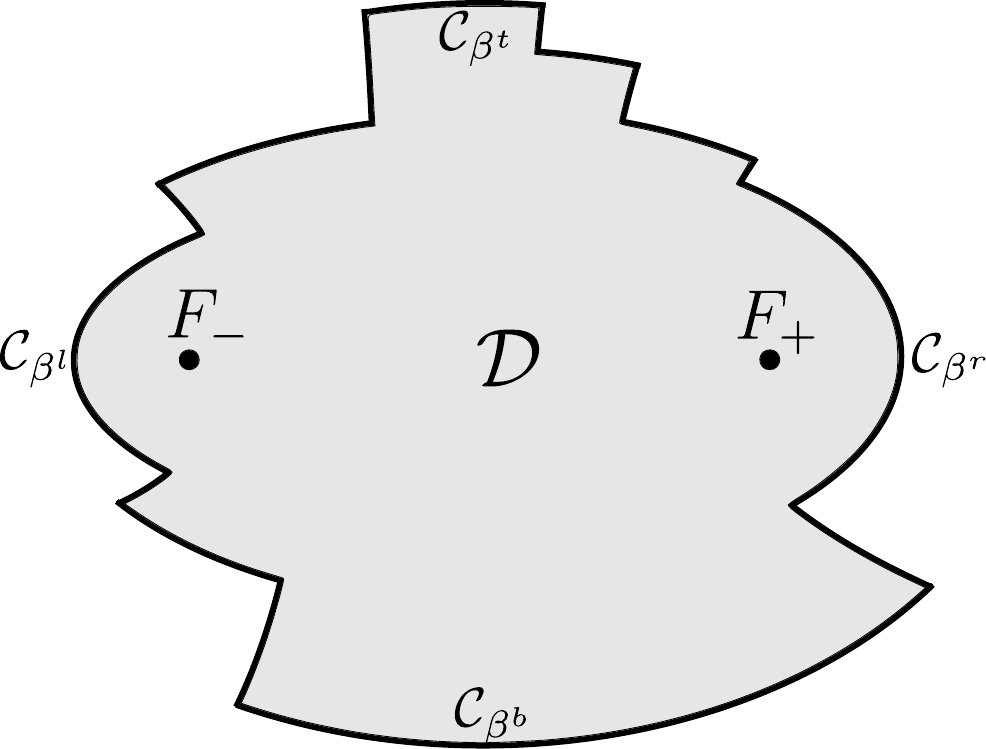}
\caption{The shape of the table $\mathcal{D}=\mathcal{D}^{(\overline{\alpha}^{++},\overline{\beta}^{++})(\overline{\alpha}^{+-},\overline{\beta}^{+-})}_{ (\overline{\alpha}^{-+},\overline{\beta}^{-+})(\overline{\alpha}^{--},\overline{\beta}^{--})}$.}\label{fig:Dgen}
\end{figure}

{ Let us consider the billiard flow $(b_t)_{t\in\R}$ on the billiard table $\mathcal D$ which acts on unit tangent vectors
$(x,\theta)\in S^1\mathcal D\subset\mathcal D\times S^1$. The flow $(b_t)_{t\in\R}$
moves $(x,\theta)$ at unit speed along the straight line through the foot point
$x\in \mathcal D$ in  direction $\theta\in S^1$ with elastic collisions at the
boundary of the table (according with the law that the angle of
incidence equals the angle of reflection with respect to the tangent at the collision point).
After reaching any of the corners, the billiard flow dies.}

Dragovi\'c and Radnovi\'c  observed in \cite{Dra-Ra} that the
phase space $S^1\mathcal D$ of the billiard flow on $\mathcal{D}$
splits into invariant subsets $\mathcal{S}_{s}$,
$s\in(\min\{\beta^t,\beta^b\},a]$ so that the ellipse $\mathcal{C}_{s}$ for
$\min\{\beta^t,\beta^b\}<s< b$  or the hyperbola $\mathcal{C}_{s}$ for
$b<s<a$  is a caustic\footnote{ Caustic is a curve for which tangent billiard trajectories remains tangent after successive reflections.} of all billiard trajectories in
$\mathcal{S}_{s}$ (see Figure~\ref{fig:Splitting}).

For every $s\in(0,b)$ denote by $\mathcal E_s$ the set of external points of the ellipse $\mathcal{C}_s$ and for every $s\in (b,a)$
denote by $\mathcal H_s$ the area between two branches of the hyperbola $\mathcal{C}_s$. Then { every billiard orbit in $\mathcal{S}_{s}$ is trapped in the set $\mathcal{D}\cap \mathcal E_s$ for  $\min\{\beta^t,\beta^b\}<s< b$ or $\mathcal{D}\cap \mathcal H_s$ for  $b<s< a$. Therefore,  the set of foot points (denoted by $S_s$) of vectors in $\mathcal S_s$} can be identified
with $\mathcal{D}\cap \mathcal E_s$ for  $\min\{\beta^t,\beta^b\}<s< b$ and  with $\mathcal{D}\cap \mathcal H_s$ for  $b<s< a$.
If $\max\{\beta^t,\beta^b\}<s<\min\{\beta^l,\beta^r\}$
then the set $\mathcal{S}_{s}$ slits into two connected  sets: the upper one $\mathcal{S}^+_{s}$ and the lower one $\mathcal{S}^-_{s}$, see Figure~\ref{fig:Splitting}.
\begin{figure}[h]
\includegraphics[width=1.0 \textwidth]{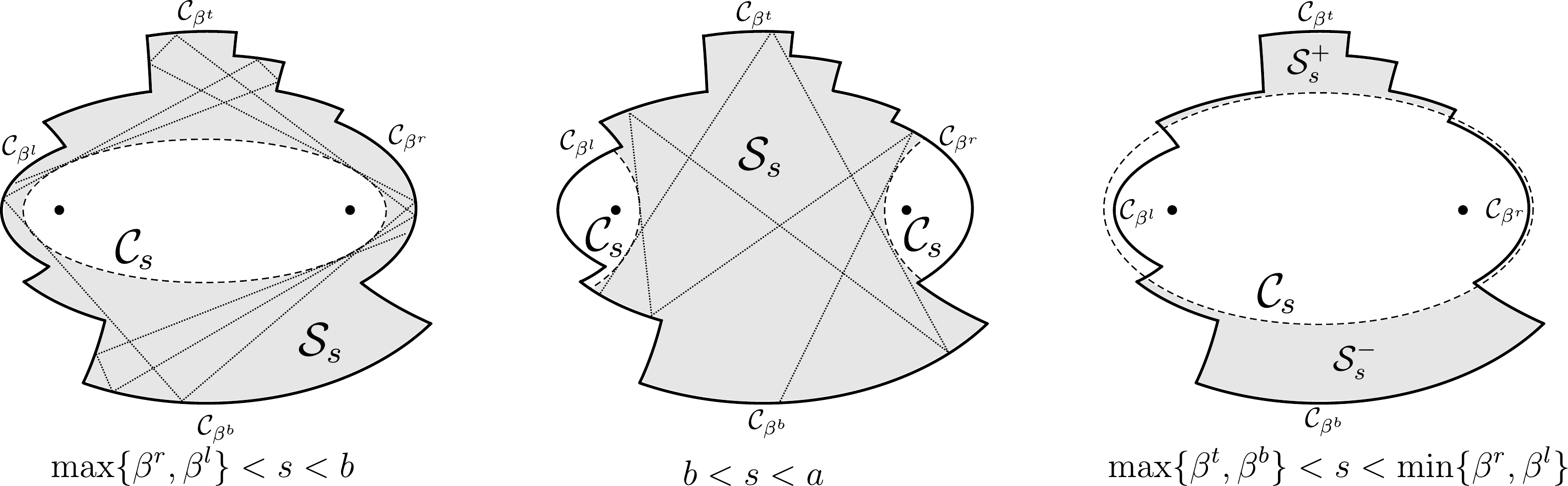}
\caption{Invariant subsets of the phase space.}\label{fig:Splitting}
\end{figure}

\smallskip

The aim of the paper is to answer affirmatively to the conjecture, raised by Zorich,
that  for almost all parameters $s$
all billiard orbits
in  $\mathcal{S}_{s}$ (or in $\mathcal{S}^\pm_{s}$) are equidistributed in $\mathcal{S}_{s}$ { (or in $\mathcal{S}^\pm_{s}$ resp.)}.

{ Recall that an abstract Borel flow $(T_t)_{t\in\R}$ on a metric space $X$ is \emph{uniquely ergodic} (or all its orbits are \emph{equidistributed} in $X$)
if there exists a probability Borel measure $\mu$ on $X$ such that for every compactly supported continuous map $f:X\to\C$ and every $x\in X$ we have
\[\frac{1}{T}\int_{0}^Tf(T_tx)\to \int_Xf\,d\mu.\]
Then $\mu$ is the unique probability invariant measure of the flow $(T_t)_{t\in\R}$.}
\begin{theorem}\label{thm:main}
For every nibbled ellipse $\mathcal D$ of the form \eqref{tableD} and for almost all  $s\in(\min\{\beta^t,\beta^b\},a)$
 the billiard flow $(b_t)_{t\in\R}$ on $\mathcal D$
restricted to any connected component of $\mathcal{S}_{s}$ is uniquely ergodic.
\end{theorem}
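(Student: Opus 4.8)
The plan is to realize the billiard flow on each invariant set $\mathcal{S}_s$ as the vertical translation flow on a translation surface $(M_s,\omega_s)$, to exhibit $s\mapsto M_s$ as a smooth curve in the moduli space, and then to combine the effective recurrence criterion of this paper with Masur's criterion. The scheme is the one familiar from rational billiards: recurrence of the Teichm\"uller orbit forces unique ergodicity of the contracted direction, and the whole difficulty is concentrated in producing recurrence for almost every parameter on a measure-zero curve.

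First I would unfold the billiard. Since every trajectory in $\mathcal{S}_s$ is tangent to the confocal conic $\mathcal{C}_s$, the dynamics on $\mathcal{S}_s$ linearizes in the elliptic coordinates adapted to the family $\{\mathcal{C}_\lambda\}$. Reflecting $\mathcal D$ across its boundary arcs $\mathcal{C}_{\alpha_i},\mathcal{C}_{\beta_j},\mathcal{C}_a,\mathcal{C}_b$ and gluing finitely many copies, one obtains a compact translation surface $(M_s,\omega_s)$ on which the billiard flow restricted to $\mathcal{S}_s$ (and, when it splits, to each of $\mathcal{S}_s^\pm$) becomes the vertical translation flow; the corners of $\mathcal D$ and the foci of the confocal family become cone points, consistently with the convention that the flow dies at the corners. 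The straightening is effected by the elliptic-integral substitution associated with the curve $w^2=\lambda(a-\lambda)(b-\lambda)(s-\lambda)$, whose four branch points are the caustic parameter and the loci $0,b,a$; the regimes $\min\{\beta^t,\beta^b\}<s<b$ (elliptic caustic) and $b<s<a$ (hyperbolic caustic) yield surfaces in the same stratum but with different combinatorics of gluings.

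With this identification, unique ergodicity of $(b_t)_{t\in\R}$ on a connected component of $\mathcal{S}_s$ is equivalent to unique ergodicity of the vertical flow on the corresponding component of $M_s$. By Masur's criterion it therefore suffices to prove that for almost every $s$ the forward Teichm\"uller orbit of $M_s$ is recurrent, i.e.\ returns to a fixed compact subset of the moduli space along an unbounded sequence of times. This is precisely the conclusion of the effective recurrence criterion applied to the curve $s\mapsto M_s$, once its hypotheses are verified.

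The main obstacle is to check the non-degeneracy hypothesis of the criterion for this explicit curve. In period coordinates $M_s$ is described by the relative periods of $\omega_s$, which are elliptic integrals
\[\int_\gamma \frac{d\lambda}{\sqrt{|\lambda(a-\lambda)(b-\lambda)(s-\lambda)|}}\]
over cycles $\gamma$ determined by the boundary data $(\overline{\alpha},\overline{\beta})$. I would show that $s\mapsto M_s$ is a real-analytic immersed curve whose velocity satisfies the twisting (transversality) condition demanded by the criterion; concretely this reduces to proving that the $s$-derivatives of these period integrals neither vanish simultaneously nor are proportional to the periods themselves, which one extracts from monotonicity and non-degeneracy properties of the relevant elliptic integrals. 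This step is delicate because it requires genuine control of how the flat geometry of $M_s$ deforms with the caustic parameter, and it is here that the quantitative non-divergence estimates of Minsky--Weiss \cite{Mi-We}, packaged inside the criterion, do the decisive work. The remaining points---measurability in $s$, the passage between $\mathcal{S}_s$ and its components $\mathcal{S}_s^\pm$, and the exclusion of the measure-zero set of degenerate parameters $s\in\{b,a\}$---are then routine, and undoing the unfolding converts recurrence for almost every $s$ into the asserted unique ergodicity.
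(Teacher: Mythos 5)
Your overall strategy coincides with the paper's: straighten the billiard on $\mathcal S_s$ with the Dragovi\'c--Radnovi\'c elliptic-integral coordinates $\sigma_s$ into a staircase polygon (Proposition~\ref{prop:bilflow}), unfold to a translation surface carrying the directional flow, reduce unique ergodicity to Teichm\"uller recurrence via Masur's criterion (Proposition~\ref{prop:mas}, after the rotation $r_{\pi/4}$ of Remark~\ref{rem:rot}), and obtain recurrence for almost every $s$ from the Minsky--Weiss-based criterion for smooth curves (Theorem~\ref{thm:gencrit}, specialised to polygonal families in Theorem~\ref{thm:mainsurf}). Up to that point your proposal and the paper agree.

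The gap lies in what you propose to verify. You claim the criterion's hypothesis ``concretely reduces to proving that the $s$-derivatives of these period integrals neither vanish simultaneously nor are proportional to the periods themselves.'' These are first-order conditions, and neither is what the criterion demands; carrying out your plan literally would prove the wrong statement. What is actually required is: (a) for almost every $s$, \emph{every} non-zero integer combination of the side-length functions vanishes nowhere along a set of full measure --- this is what excludes vertical saddle connections, and the paper obtains it from positivity of the Wronskian of the \emph{entire} family $\mathscr X_{\mathbf P}\cup\mathscr Y_{\mathbf P}\cup\{\ell\}$ (condition $(i)$ of Theorem~\ref{thm:mainsurf}), a condition on derivatives up to order $\#(\mathscr X_{\mathbf P}\cup\mathscr Y_{\mathbf P})$, fed through Lemma~\ref{lem:wron} and the determinant Lemma~\ref{lem:nonzerodet}; and (b) the positivity hypothesis that makes Theorem~\ref{thm:Mi-Weiss} applicable: all brackets $[\mathbf x,\ell]$ must carry one (weak) sign and all brackets $[\mathbf y,\ell]$ the strict opposite sign throughout $J$ (conditions $(ii_{+-})/(ii_{-+})$ of Theorem~\ref{thm:mainsurf}), since only such coherent signs give $\int_{I_s} L_s\,d\mu>0$ for \emph{every} invariant measure of the section IET. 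Non-proportionality to $\ell$ merely gives $[\,\cdot\,,\ell]\neq 0$ somewhere; the needed signed, family-wide monotonicity is extracted in the paper from the explicit double-integral formula for $[\xi_{D_1},\xi_{D_2}]$ in Lemma~\ref{lem:bracket}, whose sign depends on the position of the integration intervals relative to $J$ --- this is exactly where the opposite behaviour of the horizontal data $\mathscr X_{\mathbf P}$ and vertical data $\mathscr Y_{\mathbf P}$ in Theorem~\ref{thm:wronbrack} comes from. A secondary inaccuracy: $s\mapsto M_s$ is not one smooth curve in a fixed stratum; the combinatorics, number of singularities and genus change as $s$ crosses the parameters $\alpha^{\pm\pm}_i,\beta^{\pm\pm}_j$, so the whole argument must be run separately on each interval of the finite partition $\mathscr J$ (Proposition~\ref{prop:desc}, Corollary~\ref{cor:XYl}), rather than on $(\min\{\beta^t,\beta^b\},a)$ at once as your sketch suggests.
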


Recall that the same result was proved in \cite{Fr-Shi-Ul} for a special degenerate family of nibbled ellipses, i.e.\ for ellipses with a linear obstacle. The first step of the proof (in \cite{Fr-Shi-Ul} and in the present paper) is to consider  a special change of variables $\sigma_s$ (introduced in \cite{Dra-Ra}) leading to a polygonal billiard table $\sigma_s(S_s)$ with vertical and horizontal sides. After the change of variables the billiard flow $(b_t)_{t\in\R}$ on $\mathcal S_s$ becomes the directional billiard flow in directions $\pm\pi/4$, $\pm3\pi/4$ on $\sigma_s(S_s)$. Since $\sigma_s(S_s)$ is a rational
polygon, the map $s\mapsto \sigma_s(S_s)$ provides (after an unfolding procedure) a curve in the moduli space $\mathcal M$ of translation surfaces.

In  \cite{Fr-Shi-Ul} the unique ergodicity of the directional flows followed from the fact that almost every element of the corresponding  curve is Birkhoff ergodic for the Teichm\"uller flow $(g_t)_{t\in\R}$ restricted to an appropriate $SL_2(\R)$-invariant subsets of $\mathcal M$. In the present paper we apply a different approach developed in \cite{Mi-We}.

\subsection{Change of variables $\sigma_s$.}\label{sec:change}
First notice that  each point of the non-negative quadrant $\R^2_{\geq 0}$ except the focus $F_+$ is the intersection point of two conics $\mathcal{C}_{\lambda_1}$, $\lambda_1\in[b,a]$ and
$\mathcal{C}_{\lambda_2}$, $\lambda_2\in(-\infty,b]$.  This gives a { coordinate system $(\lambda_1,\lambda_2)\in [b,a]\times[-\infty,b]\setminus \{(b,b)\}$ in the set $\R^2_{\geq 0}\setminus\{F_+\}$}. In { this coordinate system} the elliptic and hyperbolic arcs forming
the boundary of the table are horizontal or vertical linear segments.

Let $e(\lambda,s):=\tfrac{1}{\sqrt{(a-\lambda)(b-\lambda)(s-\lambda)}}$.
For any $s\in(-\infty,b)$ let us consider
{ a new coordinate system} in $\mathcal E_s\cap \R^2_{\geq 0}$
determined by
\[\sigma_s(\lambda_1,\lambda_2)=\Big(\int_{\lambda_1}^ae(\lambda,s)\,d\lambda,\int_{\lambda_2}^se(\lambda,s)\,d\lambda\Big).\]
The domain of the new { coordinate system} is $[0,\ell(s)]\times[0,\ell(s))$, where
\[\ell(s):=\int_b^ae(\lambda,s)\,d\lambda=\int_{-\infty}^se(\lambda,s)\,d\lambda.\]
The { new coordinate system} extends by symmetry to the whole annulus $\mathcal E_s$,  its domain is the cylinder $\R/4\ell(s)\Z\times[0,\ell(s))$. More precisely, the extended coordinate chart $\sigma_s:\mathcal E_s\to \R/4\ell(s)\Z\times[0,\ell(s))$ is determined by
\begin{align*}
&\sigma_s|_{\mathcal E_s\cap \R_{\leq 0}\times\R_{\geq 0}}=Tr_{-(\ell(s),0)}\circ\sigma_s\circ \gamma_v, \quad
\sigma_s|_{\mathcal E_s\cap \R_{\leq 0}\times\R_{\leq 0}}=Tr_{-(2\ell(s),0)}\circ\sigma_s\circ \gamma_h\circ\gamma_v, \\
&\sigma_s|_{\mathcal E_s\cap \R_{\geq 0}\times\R_{\leq 0}}=Tr_{-(3\ell(s),0)}\circ\sigma_s\circ \gamma_h,
\end{align*}
where $Tr_v$ is the translation by the vector $v$.

One can carry out similar construction of { a coordinate system in} the sets $\mathcal H_s$, $s\in(b,a)$ starting from { the coordinate system in}
$\mathcal H_s\cap \R^2_{\geq 0}$ given by
 \[\sigma_s(\lambda_1,\lambda_2)=\Big(\int_{\lambda_1}^ae(\lambda,s)\,d\lambda,\int_{\lambda_2}^be(\lambda,s)\,d\lambda\Big).\]
Then the domain of the coordinate system in $\mathcal H_s\cap \R^2_{\geq 0}$ is $[0,\ell(s)]\times [0,\ell(s))$, where
\[\ell(s):=\int_s^ae(\lambda,s)\,d\lambda =\int_{-\infty}^b e(\lambda,s)\,d\lambda.\]
The domain of the extended { coordinate system (in} $\mathcal H_s$) is  $[-\ell(s),\ell(s)]\times (-\ell(s),\ell(s))$ and
the coordinate chart $\sigma_s:\mathcal H_s\to [-\ell(s),\ell(s)]\times (-\ell(s),\ell(s))$ is determined by
\begin{align*}
&\sigma_s|_{\mathcal H_s\cap \R_{\leq 0}\times\R_{\geq 0}}=\gamma_v\circ\sigma_s\circ \gamma_v, \quad
\sigma_s|_{\mathcal H_s\cap \R_{\leq 0}\times\R_{\leq 0}}=\gamma_h\circ\gamma_v\circ\sigma_s\circ \gamma_h\circ\gamma_v, \\
&\sigma_s|_{\mathcal H_s\cap \R_{\geq 0}\times\R_{\leq 0}}=\gamma_h\circ\sigma_s\circ \gamma_h.
\end{align*}

Recall that $ S_s=\mathcal D\cap\mathcal E_s$ for $\min\{\beta^t,\beta^b\}<s<b$ and
$S_s=\mathcal D\cap\mathcal H_s$ for $b<s<a$. Each set $S_s$ is
regarded in separate coordinates given by the coordinate chart $\sigma_s$.
Then $\sigma_s(S_s)$ is a polygon with vertical and horizontal
sides in $\R^2$ or in the cylinder $\R/4\ell(s)\Z\times\R$. Moreover, by Proposition~5.2 in  \cite{Dra-Ra}, we have the following result.

\begin{proposition}\label{prop:bilflow}
For every nibbled ellipse $\mathcal D$
the billiard flow $(b_t)_{t\in\T}$ on $S^1\mathcal D$ restricted to $\mathcal S_s$, $s\in (\min\{\beta^t,\beta^b\},b)\cup(b,a)$ is
{ conjugate (by $\sigma_s$) to} the directional billiard flow on $\sigma_s(S_s)$ in directions $\pm\pi/4$, $\pm 3\pi/4$.
\end{proposition}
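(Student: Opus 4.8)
The plan is to recognize $\sigma_s$ as the passage to \emph{elliptic coordinates} $(\lambda_1,\lambda_2)$ of the confocal pencil followed by the Abel‑type substitution built from $e(\lambda,s)$, and then to verify three things: that straight segments tangent to the caustic $\mathcal C_s$ become segments of slope $\pm1$, that the reflection law at the boundary arcs becomes the specular law off horizontal and vertical sides, and that the four symmetry‑related charts glue consistently. First I would record the shape of the coordinates. Each boundary arc lies on a confocal conic, hence on a level set $\{\lambda_1=\mathrm{const}\}$ (a hyperbola arc $\mathcal C_{\alpha_i}$, or a line $\mathcal C_a,\mathcal C_b$) or $\{\lambda_2=\mathrm{const}\}$ (an ellipse arc $\mathcal C_{\beta_i}$). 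Since $u=\int_{\lambda_1}^a e(\lambda,s)\,d\lambda$ and $v=\int_{\lambda_2}^{s}e(\lambda,s)\,d\lambda$ depend only on $\lambda_1$, resp.\ $\lambda_2$, one gets $du=-e(\lambda_1,s)\,d\lambda_1$ and $dv=-e(\lambda_2,s)\,d\lambda_2$; thus $\{\lambda_1=\mathrm{const}\}$ and $\{\lambda_2=\mathrm{const}\}$ map to vertical and horizontal lines, so $\sigma_s(S_s)$ is indeed bounded by horizontal and vertical segments.

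The analytic heart is the \emph{caustic relation}: along any straight segment tangent to $\mathcal C_s$ one has $e(\lambda_1,s)^2\,d\lambda_1^2=e(\lambda_2,s)^2\,d\lambda_2^2$, i.e.
\[
\frac{d\lambda_1^2}{(a-\lambda_1)(b-\lambda_1)(s-\lambda_1)}=\frac{d\lambda_2^2}{(a-\lambda_2)(b-\lambda_2)(s-\lambda_2)}.
\]
I would derive this from the integrability of the confocal pencil (Jacobi--Chasles): the parameter $s$ of the conic tangent to a line is a first integral of the free motion, and writing the tangency condition in elliptic coordinates produces exactly this separated relation. Feeding it into $\sigma_s$ gives $du=\pm dv$, i.e.\ slope $\pm1$, which are the directions $\pm\pi/4,\pm3\pi/4$. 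Using moreover the explicit Euclidean metric in elliptic coordinates,
\[
ds^2=\tfrac{\lambda_1-\lambda_2}{4}\Big(\tfrac{d\lambda_1^2}{(a-\lambda_1)(\lambda_1-b)}+\tfrac{d\lambda_2^2}{(a-\lambda_2)(b-\lambda_2)}\Big),
\]
I would substitute the caustic relation (and $du^2=dv^2$) to obtain $ds=\tfrac{\lambda_1-\lambda_2}{2}\,|du|$. Hence $\sigma_s$ carries the unit‑speed billiard flow to the unit‑speed directional flow up to the positive continuous time change $\tfrac{\lambda_1-\lambda_2}{2}$; since a time change by a positive continuous function preserves unique ergodicity, this reparametrization is harmless for Theorem~\ref{thm:main}.

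For the reflections and gluing I would invoke the classical confocal billiard property: reflection off any confocal conic preserves the caustic $\mathcal C_s$, so after each collision the trajectory stays in the slope‑$\pm1$ family. At an ellipse arc $\{\lambda_2=\beta_i\}$ the coordinate $\lambda_2$ attains an extreme value and reverses while $\lambda_1$ stays monotone, so $dv$ changes sign and $du$ does not; the image slope flips, which is precisely specular reflection off the horizontal side $\{v=\mathrm{const}\}$. Symmetrically, reflection off a hyperbola arc $\{\lambda_1=\alpha_i\}$ flips $du$ and is specular off a vertical side, and the caustic $\mathcal C_s$ itself ($\lambda_2=s$, i.e.\ $v=0$) is touched tangentially and acts as a further wall. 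Finally I would check that the symmetry prescriptions for $\sigma_s$ on the remaining quadrants (the compositions with $\gamma_v,\gamma_h$ and the translations by multiples of $\ell(s)$) are chosen exactly so that crossing a coordinate axis $\mathcal C_a$ or $\mathcal C_b$, where the trajectory passes straight through and $\lambda_1$ turns at $a$ or $b$, glues the four charts into one flat cylinder (resp.\ rectangle), on which the broken slope‑$\pm1$ lines become a single directional billiard flow; the hyperbola‑caustic case $b<s<a$ is entirely analogous with $v=\int_{\lambda_2}^b e(\lambda,s)\,d\lambda$.

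The step I expect to be the main obstacle is pinning down the caustic relation together with the correct signs — equivalently the separation of variables for the confocal pencil — and the bookkeeping of the reflection step: verifying that the extreme/monotone behaviour of $\lambda_1,\lambda_2$ at each type of arc matches the specular law in the right orientation, and that the quadrant gluing by $\gamma_v,\gamma_h$ and the translations is globally consistent. Since these verifications are exactly the content of Proposition~5.2 in \cite{Dra-Ra}, in the paper I would reconstruct only the key identities above and quote that reference for the remaining combinatorial bookkeeping.
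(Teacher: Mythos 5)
Your proposal is correct, and in substance it takes the same route as the paper --- in fact it does more: the paper offers no argument of its own for this proposition, quoting it directly from Proposition~5.2 of \cite{Dra-Ra}, whereas you reconstruct the content of that reference. Your key identities check out: the separation relation along lines tangent to $\mathcal C_s$ is the classical Jacobi--Chasles integral for the confocal pencil, and substituting it (together with $du^2=dv^2$) into the flat metric in elliptic coordinates does give $ds=\tfrac{\lambda_1-\lambda_2}{2}\,|du|$, since the coefficient of $d\lambda_1^2$ reduces to $(\lambda_1-s)\,du^2$, that of $d\lambda_2^2$ to $(s-\lambda_2)\,dv^2$, and the two factors sum to $\lambda_1-\lambda_2$. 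Two remarks on the comparison. First, you are right to flag the time change: $\sigma_s$ is not an isometry, so it conjugates the unit-speed billiard flow to the directional flow only up to the positive continuous reparametrization $\tfrac{\lambda_1-\lambda_2}{2}$; the paper's wording ``conjugate (by $\sigma_s$)'' silently absorbs this, and your observation that unique ergodicity survives such time changes is exactly what keeps the proposition usable in the proof of Theorem~\ref{thm:main}. Second, your bookkeeping of the walls (reversal of $\lambda_2$ at ellipse arcs giving specular reflection off horizontal sides, reversal of $\lambda_1$ at hyperbola arcs off vertical sides, tangential touching of the caustic at $v=0$, and straight passage through $\mathcal C_a$, $\mathcal C_b$ matched by the quadrant gluing with $\gamma_v$, $\gamma_h$ and the translations by multiples of $\ell(s)$) is the correct skeleton of the verification; deferring its full combinatorial check to \cite{Dra-Ra} is precisely what the paper does wholesale, so your proof is a strictly more self-contained version of the paper's.
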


A precise description of the polygons $\sigma_s(S_s)$ for $s\in (\min\{\beta^t,\beta^b\},b)\cup(b,a)$ is presented in Section~\ref{sec:uniqerg}. In Section~\ref{sec:poly} we supply an appropriate language for this description.

\smallskip
{
Formally the directional billiard flow on $\sigma_s(S_s)$ in directions $\pm\pi/4$, $\pm 3\pi/4$ acts on the union of four copies of the polygon, denoted by  $\sigma_s(S_s)_{\pi/4}$,
$\sigma_s(S_s)_{-\pi/4}$, $\sigma_s(S_s)_{3\pi/4}$, $\sigma_s(S_s)_{-3\pi/4}$. Each copy $\sigma_s(S_s)_{\theta}$ for $\theta\in \{\pm\pi/4$, $\pm 3\pi/4\}$ represents
all unit vectors pointing in the same direction $\theta$. After applying the horizontal or vertical reflection (or both) to each copy separately, we can arrange all unit vectors to point to the same direction $\pi/4$.
More precisely, after such transformations, all unit vectors in $\sigma_s(S_s)_{\pi/4}$, $\gamma_h\sigma_s(S_s)_{-\pi/4}$, $\gamma_v\sigma_s(S_s)_{3\pi/4}$ and $\gamma_h\circ\gamma_v\sigma_s(S_s)_{-3\pi/4}$
point to the same direction $\pi/4$. By gluing corresponding sides of these four polygons, we get a compact connected orientable surface $M_s$ with a translation structure $\omega(s)$ inherited from the Euclidian plan.
Moreover, the directional billiard flow on $\sigma_s(S_s)$ in directions $\pm\pi/4$, $\pm 3\pi/4$ is conjugate to the translation flow in direction $\pi/4$ (denoted by $(\varphi^{\pi/4}_t)_{t\in\R}$) on the translation surface $(M_s,\omega(s))$. This is an example of using the previously mentioned unfolding procedure coming from \cite{Fox-Ker} and \cite{Ka-Ze}.

A precise description of the polygons $\sigma_s(S_s)$ for $s\in (\min\{\beta^t,\beta^b\},b)\cup(b,a)$ presented in Section~\ref{sec:uniqerg}, shows that the interval $(\min\{\beta^t,\beta^b\},a)$
splits into finitely many subintervals $\{J:J\in\mathcal{J}\}$ so that for all $s$'s form the interior of $J\in\mathcal J$ the surfaces $M_s$ have the same genus $g_J$ and the map $J\ni s\mapsto \omega(s)$ is smooth.
In view of Proposition~\ref{prop:bilflow}, it follows that we need to prove that for every $J\in\mathcal{J}$ and for a.e.\ $s\in J$ the directional flow $(\varphi^{\pi/4}_t)_{t\in\R}$ on the translation surface $(M_{g_J}, \omega(s))$ is uniquely ergodic, where $M_{g_J}$ is a compact connected orientable surface of genus $g_J$. This observation allows us to translate the original problem into the language of translational surfaces and smooth curves in the moduli space of translational surfaces.}

\section{Translation surfaces}
\begin{definition}\label{def:transsurf}
A \emph{translation surface} is a compact connected orientable topological surface $M$, together with a finite set
of points $\Sigma$ (singularities) and an atlas of charts $\omega=\{\zeta_\alpha:U_\alpha\to \C:\alpha\in\mathcal{A}\}$ on $M\setminus \Sigma$ such that every transition map
$\zeta_\beta\circ\zeta^{-1}_\alpha:\zeta_\alpha(U_\alpha\cap U_\beta)\to \zeta_\beta(U_\alpha\cap U_\beta)$ is a translation, i.e.\ for every connected component $C$ of $U_\alpha\cap U_\beta$ there exists
$v_{\alpha,\beta}^C\in \C$ such that  $\zeta_\beta\circ\zeta^{-1}_\alpha(z)=z+v^C_{\alpha,\beta}$ for $z\in \zeta_\alpha^{-1}(C)$.
\end{definition}

{
For every $\theta\in\R/2\pi\Z$ (we will identify $\R/2\pi\Z$ with $S^1$) let $X_\theta$ be a tangent vector field on $M\setminus\Sigma$ which is the pullback of the unit constant vector field $e^{i\theta}$ on $\C$ through the charts of the atlas.
Since the derivative of any transition map is the identity, the vector field  $X_\theta$ is well defined on $M\setminus\Sigma$.
Denote by $(\varphi^\theta_t)_{t\in\R}$ the corresponding flow, called the translation flow on $(M,\omega)$ in direction $\theta$. The flow preserves the measure $\lambda_{\omega}$ which is the pullback of the Lebesgue measure on $\C$. We will denote by $(\varphi^v_t)_{t\in\R}$ and $(\varphi^h_t)_{t\in\R}$ the vertical and horizontal flow respectively.

A \emph{saddle connection} in direction $\theta$ is an orbit segment of $(\varphi^\theta_t)_{t\in\R}$ that goes
from a singularity to a singularity (possibly, the same one) and has no interior singularities. A semiorbit of $(\varphi^\theta_t)_{t\in\R}$ that goes from or to a singularity is  called  an outgoing or incoming \emph{separatrix}. Recall that if $(M,\omega)$ has no saddle connection in direction $\theta$, then the flow $(\varphi^\theta_t)_{t\in\R}$ is \emph{minimal}, i.e.\
every its orbit is dense in $M$.

\smallskip

Given a topological compact connected orientable surface $M$ and its finite subset $\Sigma\subset M$,
denote by $\operatorname{Diff}^+(M,\Sigma)$ the group of orientation-preserving homeomorphisms of $M$ which fix all elements of $\Sigma$.
Denote by $\mathcal{M}(M,\Sigma)$ the \emph{moduli space} of translation surfaces with singularities at $\Sigma$, i.e.\
the space of orbits of the natural action of
$\operatorname{Diff}^+(M,\Sigma)$ on the space of translation structures on $M$ with singularities at $\Sigma$.
The moduli space has a natural structure of complex orbifold (locally the quotient of a
complex manifold by a finite group) described in detail in \cite{Yoc}}

On the moduli space the Teichm\"uller flow $(g_t)_{t\in\R}$ acts deforming the translation structure $\omega$ in local coordinates by linear maps $\{\begin{bsmallmatrix}
 e^t & 0 \\
 0 & e^{-t}
 \end{bsmallmatrix}:t\in\R\}$
and the rotations $(r_{\theta})_{\theta\in\R/\Z}$ act by linear maps
$\{\begin{bsmallmatrix}
 \cos \theta & -\sin\theta \\
 \sin\theta & \cos\theta
 \end{bsmallmatrix}:t\in\R\}$.

{
\begin{remark}\label{rem:rot}
Notice that for every $(M,\omega)\in \mathcal{M}(M,\Sigma)$ and any $\theta\in S^1$ the directional flow $(\varphi^{\theta}_t)$ on $(M,\omega)$
coincides with the vertical flow $(\varphi^{v}_t)$ on $(M,r_{\pi/2-\theta}\omega)$.
\end{remark}

\begin{definition}
A translation surface $(M,\omega)\in \mathcal{M}(M,\Sigma)$ is called \emph{recurrent} if there exists a sequence $(t_n)_{n\geq 1}$ increasing to $+\infty$  and a compact subset $\mathcal K\subset \mathcal{M}(M,\Sigma)$
such that $g_{t_n}(M,\omega)\in\mathcal K$ for all $n\geq 1$.
\end{definition}
\begin{proposition}[Masur \cite{Ma}]\label{prop:mas}
If a translation surface $(M,\omega)$ is recurrent, then
the vertical flow on $(M,\omega)$ is uniquely ergodic.
\end{proposition}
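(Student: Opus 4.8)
The plan is to argue by contraposition, showing that if the vertical flow on $(M,\omega)$ fails to be uniquely ergodic then the forward Teichm\"uller orbit $(g_t(M,\omega))_{t\geq 0}$ must eventually leave every compact subset of $\mathcal{M}(M,\Sigma)$, which is incompatible with recurrence. The bridge between ``leaving every compact set'' and the concrete geometry of $g_t\omega$ is the compactness criterion for moduli spaces of translation surfaces: a subset of $\mathcal{M}(M,\Sigma)$ is precompact if and only if the lengths of all saddle connections are bounded below by a common positive constant (equivalently, no arbitrarily short saddle connection and no arbitrarily thin cylinder appears). Thus it suffices to produce, along some sequence $t_n\to+\infty$, saddle connections on $g_{t_n}\omega$ whose length tends to $0$; this would contradict $g_{t_n}(M,\omega)\in\mathcal K$ for the compact set $\mathcal K$ from the definition of recurrence.

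First I would encode non-unique ergodicity cohomologically. One may assume the vertical foliation is minimal (if it carries a vertical saddle connection, the vertical flow is already non-minimal and the degeneration can be read off directly from that connection, whose horizontal holonomy is forced to shrink under $g_t$). For a minimal foliation the invariant probability measures of $(\varphi^v_t)_{t\in\R}$ coincide with the invariant transverse measures of the vertical foliation, and by the Katok--Veech bound this space is a finite-dimensional simplex. Each transverse measure $\mu$ determines a class $[\mu]\in H^1(M,\Sigma;\R)$ obtained by integrating horizontal displacement against $\mu$, and the Lebesgue measure $\lambda_\omega$ corresponds to the horizontal class $[\rp\,\omega]$. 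Failure of unique ergodicity produces a second invariant probability measure, hence a nonzero class $\eta:=[\mu_1]-[\mu_2]$ lying in the ``horizontal'' part of cohomology and linearly independent from $[\rp\,\omega]$.

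The quantitative core is to convert $\eta\neq 0$ into shrinking saddle connections on $g_t\omega$. Here one exploits that $g_t=\mathrm{diag}(e^t,e^{-t})$ expands horizontal holonomy by $e^t$ and contracts vertical holonomy by $e^{-t}$ while preserving area: a short saddle connection on $g_t\omega$ corresponds to a relative homology class $\gamma$ for which $e^t\,|\!\int_\gamma \rp\,\omega|$ and $e^{-t}\,|\!\int_\gamma \ip\,\omega|$ are both small, i.e.\ a nearly vertical connection whose horizontal holonomy is exponentially small. The independent transverse measure $\mu_2$ lets one approximate $\mu_2$ by weighted systems of long, almost-closed vertical segments that are almost invariant yet not equidistributed; cutting the surface along the interface between the $\mu_1$-heavy and $\mu_2$-heavy regions yields curves whose horizontal holonomy must tend to $0$ under renormalization by $g_t$, since otherwise the stretched horizontal mass would be incompatible with the normalizations $\mu_i(M)=1$ and the area-one constraint. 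Carrying this out along a subsequence $t_n\to+\infty$ produces saddle connections (or cylinder core curves) on $g_{t_n}\omega$ of length $\to 0$.

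I expect this last step to be the main obstacle: it is essentially the content of Masur's theorem, and the difficulty lies precisely in passing from the abstract statement ``there exist two distinct ergodic transverse measures'' to a genuine \emph{geometric} degeneration of the renormalized surfaces. The cleanest way to control it is through the intersection pairing between the horizontal class $\eta$ and the cycles carried by the vertical foliation, combined with the finite-dimensionality of the simplex of invariant measures, which forces the renormalized data to accumulate on the boundary of the relevant stratum. Once escape to infinity is established, it contradicts recurrence, and the proposition follows by contraposition; see \cite{Ma}.
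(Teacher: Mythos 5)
The paper does not actually prove this proposition: it is quoted as Masur's theorem with the citation \cite{Ma}, so the only meaningful question is whether your sketch would stand as a self-contained proof. It would not, for two reasons. The first is a gap in the logical frame itself. Recurrence asserts only that $g_{t_n}(M,\omega)$ lies in some compact set $\mathcal K$ along \emph{some} sequence $t_n\to+\infty$. To contradict it you must show that the orbit leaves \emph{every} compact set \emph{permanently}, i.e.\ that the systole of $g_t\omega$ tends to $0$ as $t\to+\infty$ (genuine divergence). Your plan produces short saddle connections only along a subsequence that \emph{you} choose; that is entirely compatible with the orbit re-entering $\mathcal K$ along a different sequence of times, so no contradiction with recurrence is obtained. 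Masur's theorem is precisely the stronger, full-limit divergence statement, and this strengthening is not cosmetic. (A smaller slip of the same kind: a vertical saddle connection has \emph{zero} horizontal holonomy; what shrinks under $g_t$ is its vertical holonomy, i.e.\ its length.)

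The second and deeper problem is that the quantitative core --- converting the existence of two distinct invariant transverse measures into a geometric degeneration of $g_t\omega$ --- is only gestured at. The proposed ``interface between the $\mu_1$-heavy and $\mu_2$-heavy regions'' is not a well-defined geometric object: for a minimal vertical foliation the generic sets of distinct ergodic measures are both dense and interleave at every scale, so there is no curve separating them to cut along; making this work requires the flat-geometry and extremal-length estimates that constitute the actual proof in \cite{Ma}. You concede this (``it is essentially the content of Masur's theorem''), so what you have written is in effect a reduction of the proposition to the main theorem of \cite{Ma} --- which is exactly what the paper does by citing it --- dressed as a proof attempt whose two key steps (full divergence rather than subsequential degeneration, and the construction of the degenerating classes) are missing.
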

}

One of the main aims of the paper is to formulate and prove an effective criterion for the recurrence of almost every element of a smooth curve in the moduli space $\mathcal{M}(M,\Sigma)$.
More precisely,  we deal with a $C^\infty$-map $J\ni s\mapsto \omega(s)\in \mathcal{M}(M,\Sigma)$ ($J\subset\R$ is a finite interval) and we want to show
that  $(M,\omega(s))$ is recurrent for a.e.\ $s\in J$.
{ In fact, we want to use this type of result for the $r_{\pi/4}$-rotation of curves mentioned at the end of Section~\ref{sec:change}.
Indeed, in view of Remark~\ref{rem:rot} and Proposition~\ref{prop:mas}, if a.e.\ element of the curve $s\mapsto r_{\pi/4}\omega(s)$ is recurrent, then
for a.e.\ $s$ the flow $(\varphi^{\pi/4}_t)_{t\in\R}$ on $(M,\omega(s))$ is uniquely ergodic.

An archetypical example of the criterion for recurrence is a classical theorem by  Kerckhoff, Masur and Smillie
\cite{Ke-Ma-Sm} saying that for every compact translation surface $(M,\omega)$ the rotated translation surface $(M,r_s\omega)$ is recurrent for a.e.\ $s\in[0,2\pi]$, i.e.\ here we deal with specific curves of the form $[0,2\pi]\ni s\mapsto r_s\omega\in \mathcal{M}(M,\Sigma)$.
However, this result does not apply to the $r_{\pi/4}$-rotation of curves mentioned at the end of Section~\ref{sec:change}.}

Another important step toward understanding the problem of recurrence was made by Minsky and Weiss in \cite{Mi-We02} where recurrence is shown for a.e.\ element of any horocyclic arc in $\mathcal{M}(M,\Sigma)$.
The ideas developed in \cite{Mi-We02} were further extended  in \cite{Mi-We} to curves well approximated by horocylic arcs
{ and then used to prove a criterion for a.e.\ recurrence for curves of interval exchange transformations (see Theorem~\ref{thm:Mi-Weiss}).
The main aim of this section is to reformulate and prove Minsky-Weiss criterion in terms of translation surfaces and their relative homologies (see Theorem~\ref{thm:gencrit})}.


\smallskip

{ The transition from translation surfaces to interval exchange transformations is obvious and consists in choosing a transversal section to the vertical flow and considering the map of the first return.}
Suppose that  a horizontal interval $I\subset M$ is a global transversal for the vertical flow $(\varphi^v_t)_{t\in\R}$ on $(M,\omega)$, i.e.\
its every infinite semiorbit meets $I$ infinitely many times. Recall that this condition holds for any horizontal interval whenever $(M,\omega)$ has no vertical saddle connection.
The interval $I$ we identify with the real interval $[0,|I|)$. Denote by $T_{\omega,I}:I\to I$ the first return map of the flow  $(\varphi^v_t)_{t\in\R}$ to $I$.
Then $T_{\omega,I}$ is an interval exchange transformation whose discontinuities belong to incoming separatices.

\smallskip

Every $d$-interval exchange transformation (IET) is determined by two parameters: a permutation $\pi\in S_d$ ($S_{d}$ is the group of permutations of the set $\{1,\ldots,d\}$) and $\lambda\in\R^d_{>0}$ as follows.
For every $\Lambda=(\pi,\lambda)\in S_{d}\times\R^d_{>0}$ let
\[b_j(\Lambda)=\sum_{i\leq j}\lambda_i,\quad t_j(\Lambda)=\sum_{\pi(i)\leq j}\lambda_i\quad\text{for}\quad 0\leq j\leq d.\]
Then
\[b_{0}(\Lambda)=t_{0}(\Lambda)=0,\quad b_{d}(\Lambda)=t_{d}(\Lambda)=|\lambda|=\sum_{i=1}^d\lambda_i\]
and
\[b_{j}(\Lambda)-b_{j-1}(\Lambda)=\lambda_j,\,t_{j}(\Lambda)-t_{j-1}(\Lambda)=\lambda_{\pi^{-1}(j)}\text{ for }1\leq j\leq d.\]
Denote by $T_\Lambda=T_{\pi,\lambda}:[0,|\lambda|)\to[0,|\lambda|)$ the IET so that each interval $[b_{j-1}(\Lambda),b_{j}(\Lambda))$ is translated by $T_\Lambda$ into  $[t_{\pi (j)-1}(\Lambda),t_{\pi (j)}(\Lambda))$, i.e.
\begin{equation}\label{eq:defT}
T_\Lambda x=x+t_{\pi (j)}(\Lambda)-b_{j}(\Lambda)\quad\text{for all}\quad x\in [b_{j-1}(\Lambda),b_{j}(\Lambda)).
\end{equation}

We say that the  IET $T_{\Lambda}$ has a \emph{connection} if there exist $1\leq i,j< d$ and $n>0$ such that $T^n_{\Lambda}b_i(\Lambda)=b_j(\Lambda)$. The IET $T_{\Lambda}$ is \emph{uniquely ergodic} if
it has no connection and the Lebesgue measure on $[0,|\lambda|)$ is the only $T_\Lambda$-invariant measure.
For every $n>0$ denote by $\vep_n(\Lambda)$ the minimal distance between the points $T^k_\Lambda b_i(\Lambda)$ for $0\leq k\leq n$ and $1\leq i<d$.
We say that the IET $T_\Lambda$ is of \emph{recurrence type} if it has no connection and $\liminf n\vep_n(\Lambda)>0$. Recall that every IET of recurrence type is uniquely ergodic.

\begin{remark}\label{rem:unique}
Suppose that a horizontal interval $I\subset M$ is a global transversal for the vertical flow $(\varphi^v_t)_{t\in\R}$ on $(M,\omega)$.
If the flow $(\varphi^v_t)_{t\in\R}$ has no saddle connection then $T_{\omega,I}$ has no connection.
Moreover, in view of \cite[Sect.\ 3.3]{Vor} (see \cite[Proposition 7.2]{Mi-We} for a qualitative version), the translation surface $(M,\omega)$ is recurrent if and only if
the IET $T_{\omega,I}$ is of recurrent type.
\end{remark}

\subsection{Minsky-Weiss approach and its application}

{ Let $J\ni s\mapsto (M,\omega(s))\in \mathcal{M}(M,\Sigma)$ be a $C^{\infty}$ map.}
Suppose that for every $s\in J$ there exists a horizontal interval $I_s$ in $(M,\omega(s))$ so that $I_s$ is a global transversal for  the vertical flow on $(M,\omega(s))$
and $s\mapsto I_s$ is of class $C^\infty$. Assume that all IETs $T_s:=T_{\omega(s),I_s}$ for $s\in J$ exchange $d\geq 2$ intervals according to the same
permutation $\pi\in S_d$. Then there exists a $C^\infty$-map $J\ni s\mapsto \Lambda(s)=(\pi,\lambda(s))\in S_{d}\times\R^d_{>0}$ such that $T_s=T_{\Lambda(s)}$ for every $s\in J$.  For every $s\in J$ we define a piecewise constant function $L_s:I_s\to\R$ by
\begin{equation}\label{eq:defL}
L_s(x)=\tfrac{d}{ds}(b_j(\Lambda(s))-t_{\pi(j)}(\Lambda(s)))\quad\text{if}\quad x\in [b_{j-1}(\Lambda(s)),b_{j}(\Lambda(s))).
\end{equation}
In view of \eqref{eq:defT}, $t_{\pi(j)}(\Lambda(s))-b_j(\Lambda(s))$  measures the displacement between $x$ and $T_sx$ if  $x\in [b_{j-1}(\Lambda(s)),b_{j}(\Lambda(s)))$.
\begin{theorem}[see Theorem~6.2 in \cite{Mi-We}]\label{thm:Mi-Weiss}
{ Let $J\ni s\mapsto \Lambda(s)=(\pi,\lambda(s))\in S_{d}\times\R^d_{>0}$ be a $C^2$-map. For every $s\in J$ denote
by $T_s:I_s\to I_s$ the IET given by $\Lambda(s)$ and let $L_s:I_s\to\R$ be defined by \eqref{eq:defL}.}
Suppose that  for a.e.\ $s\in J$ the IET $T_s:I_s\to I_s$ has no connection and for every $T_{s}$-invariant
measure $\mu$ on $I_s$ we have $\int_{I_s}L_s(x)\,d\mu(x)>0$.  Then  $T_s$ is of recurrence type for a.e.\ $s\in J$.
\end{theorem}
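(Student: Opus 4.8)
The plan is to establish the quantitative Diophantine estimate $\liminf_n n\,\vep_n(\Lambda(s))>0$ for a.e.\ $s$, since together with the assumed absence of connections this is exactly the definition of recurrence type. As the hypotheses are local and stable under restriction, it suffices to argue on a small subinterval $J_0\subset J$ on which the $C^2$-bounds on $\lambda(s),\lambda'(s),\lambda''(s)$ are uniform. The decisive input is the \emph{uniform} consequence of the standing assumption: since $\int_{I_s}L_s\,d\mu>0$ for \emph{every} $T_s$-invariant measure $\mu$ and the set of invariant probability measures is weak-$*$ compact, the classical characterization $\min_\mu\int L_s\,d\mu=\lim_{k}\min_x\frac{1}{k}\sum_{m=0}^{k-1}L_s(T^m_sx)$ yields constants $c>0$ and $N$ with $\sum_{m=0}^{k-1}L_s(T^m_sx)\geq ck$ for all $k\geq N$, uniformly in $x\in I_s$ and $s\in J_0$ (shrinking $J_0$ and using that invariant measures of minimal IETs are non-atomic, so that the discontinuities of $L_s$ are negligible). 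It is precisely here that positivity against \emph{all} invariant measures, rather than merely against Lebesgue, is indispensable, since $T_s$ need not be uniquely ergodic a priori.

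Next I would record the key derivative identity. Writing $x_m(s)=T^m_s b_i(\Lambda(s))$ and using that $T_s$ is locally a translation (so $\partial T_s/\partial x\equiv 1$) together with $\partial T_s/\partial s=-L_s$, a telescoping computation gives
\[\frac{d}{ds}\,T^k_s b_i(\Lambda(s))=\frac{d}{ds}b_i(\Lambda(s))-\sum_{m=0}^{k-1}L_s\big(T^m_s b_i(\Lambda(s))\big).\]
Combined with the uniform lower bound on Birkhoff sums, this shows that each separatrix orbit point $s\mapsto T^k_s b_i$ is, for $k\geq N$, a monotone function of $s$ with $\bigl|\tfrac{d}{ds}T^k_s b_i\bigr|\geq ck/2$. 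Consequently the map $s\mapsto T^k_sb_i$ sweeps across $I_s$ at speed proportional to $k$, so the set of parameters for which $T^k_s b_i$ lands in any fixed interval of length $\eta$ has measure $\lesssim \eta/(ck)$.

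With this in hand the conclusion follows from a Borel--Cantelli argument over dyadic scales $n=2^r$. A value $\vep_n(\Lambda(s))<\delta/n$ forces two of the points $T^k_sb_i$, $0\leq k\leq n$, to be $\delta/n$-close; after reducing such a coincidence to a near-return $T^w_s y(s)\approx y(s)$ with $w\leq n$, the same telescoping shows that $s\mapsto T^w_s y(s)-y(s)$ is monotone with $\bigl|\tfrac{d}{ds}(T^w_s y-y)\bigr|\geq cw$, so the offending parameter set has measure $\lesssim \delta/(cw)$. Summing these bounds over the admissible ranges of $w$ and over dyadic $n$, and then letting $\delta\to 0$, shows that $\{s\in J_0:\liminf_n n\,\vep_n(\Lambda(s))=0\}$ is Lebesgue-null; covering $J$ by finitely many such $J_0$ finishes the proof.

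The main obstacle is the reduction underlying this last step. The quantity $\vep_n$ compares points $T^k_sb_i$ and $T^{k'}_sb_{i'}$ with possibly \emph{distinct} separatrices and \emph{comparable} iterate counts $k\approx k'$, and for such pairs the relative velocity $\frac{d}{ds}(T^k_sb_i-T^{k'}_sb_{i'})$ need not be large, so they cannot be charged directly to a fast-moving monotone quantity. Organizing every small gap so that it is accounted for by a genuine near-return (to which the monotonicity applies), while simultaneously controlling the differentiation across the $s$-dependent discontinuities of the orbit maps, is exactly the delicate combinatorial heart of the argument. This is where I would lean on the Minsky--Weiss machinery of \cite{Mi-We}, realizing the curve $s\mapsto\Lambda(s)$ (via suspension) as a curve in the moduli space well approximated by horocyclic arcs and invoking their quantitative nondivergence estimate (the descendant of \cite{Mi-We02}), with $L_s$ playing the role of the transversal velocity and Remark~\ref{rem:unique} translating recurrence of the surface back into recurrence type of the IET.
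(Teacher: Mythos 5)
First, a structural remark: the paper never proves Theorem~\ref{thm:Mi-Weiss} at all --- it is imported verbatim (up to notation) from \cite[Theorem~6.2]{Mi-We} and used as a black box, so the only meaningful comparison is with the Minsky--Weiss proof itself. Your two opening steps are correct and are genuine ingredients of that proof: the uniform Birkhoff bound $\sum_{m=0}^{k-1}L_s(T_s^mx)\ge ck$ for all $x$ and all $k\ge N$ (weak-$*$ compactness, plus non-atomicity of invariant measures of a minimal IET to handle the discontinuities of $L_s$), and the telescoping identity $\frac{d}{ds}T^k_sb_i(\Lambda(s))=\frac{d}{ds}b_i(\Lambda(s))-\sum_{m=0}^{k-1}L_s(T^m_sb_i(\Lambda(s)))$, which follows from $\partial T_s/\partial x\equiv 1$ and $\partial T_s/\partial s=-L_s$.

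The Borel--Cantelli core of your argument, however, has a fatal gap beyond the pairing issue you flag. The maps $s\mapsto T^k_sb_i(\Lambda(s))$ and $s\mapsto T^w_sy(s)-y(s)$ are \emph{not} monotone on $J_0$: they are only piecewise $C^1$, with a jump at every parameter where the length-$w$ orbit segment crosses a discontinuity of $T_s$, and the derivative bound $\ge cw/2$ holds only on each smooth branch. Hence the measure of $\{s:|T^w_sy(s)-y(s)|<\eta\}$ is bounded by $(\text{number of branches})\cdot O(\eta/(cw))$, not by $O(\eta/(cw))$. The number of branches necessarily grows at least linearly in $w$: on each branch the function moves at speed $\ge cw/2$ while staying in a range of length $O(|I_s|)$, so each branch has length $O(|I_s|/(cw))$ and there are at least of order $cw|J_0|/|I_s|$ of them. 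Nothing in the naive argument prevents most branches from passing near zero, so the only justified union bound is, for $\eta=\delta/n$, of order $\delta|J_0|/(n|I_s|)$ \emph{per} $w$; summing over $w\le n$ gives a bound of order $\delta|J_0|/|I_s|$ at each dyadic scale, and the sum over scales diverges, so Borel--Cantelli yields nothing. A sanity check that your stated bound $\lesssim\delta/(cw)$ cannot be correct: it would apply to every $\delta>0$ and would force $\liminf_n n\vep_n(\Lambda(s))=+\infty$ for a.e.\ $s$, contradicting the pigeonhole bound $n\vep_n(\Lambda(s))\le|I_s|$ (applied to the $n+1$ points $T^k_sb_1$, $0\le k\le n$), valid for every $s$ and $n$. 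Defeating exactly this multiplicity-of-branches problem --- replacing the union bound by an inductive covering argument --- is the content of the quantitative nondivergence technology of \cite{Mi-We02} and \cite{Mi-We}. Consequently your closing paragraph, which hands this step to ``the Minsky--Weiss machinery'' (suspension, approximation by horocyclic arcs, nondivergence, and the dictionary of Remark~\ref{rem:unique}), is an accurate outline of how \cite[Theorem~6.2]{Mi-We} is actually proved, but as a proof of Theorem~\ref{thm:Mi-Weiss} it is circular in effect: it amounts to the same citation the paper itself makes, with the hard step still unproven.
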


\begin{remark}
In  \cite[Theorem~6.2]{Mi-We} the authors deal with any decaying Federer measure $m$ on the interval $J$ instead of the Lebesgue measure on $J$.
A Borel measure $m$ on $J$ is  decaying and Federer if there are positive constants  $\alpha$, $C$ and $D$ such that for every $x\in \supp(m)$, $0<r,\vep<1$ we have
\[m((x-\vep r,x+\vep r))\leq C\vep^\alpha m((x- r,x+ r)), \ m((x-3 r,x+3r))\leq D m((x-r,x+r)).\]
Since many singular measures are decaying and Federer, the full version of Theorem~6.2 in \cite{Mi-We} gives much more subtle information about the set of all $s\in J$
for which the conclusion of the theorem holds. We should emphasise that all forthcoming results are also true when  ``for a.e.\ $s\in J$'' is replaced by ``for $m$-a.e.\ $s\in J$''.
\end{remark}

\begin{corollary}\label{cor:uniq}
Suppose that for a.e.\ $s\in J$ the translation surface  $(M,\omega(s))$ has no vertical saddle connection and
\begin{align}\label{cond:b-t}
\begin{aligned}
&\tfrac{d}{ds}(b_j(\Lambda(s))-t_{\pi(j)}(\Lambda(s)))\geq 0\quad\text{for every}\quad 1\leq j\leq d\quad\text{with}\\
&\sum_{j=1}^d\tfrac{d}{ds}(b_j(\Lambda(s))-t_{\pi(j)}(\Lambda(s)))> 0.
\end{aligned}
\end{align}
Then  $(M,\omega(s))$ is recurrent for a.e.\ $s\in J$.
\end{corollary}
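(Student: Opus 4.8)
The plan is to deduce the corollary from the Minsky--Weiss criterion (Theorem~\ref{thm:Mi-Weiss}) through the dictionary of Remark~\ref{rem:unique} between recurrence of $(M,\omega(s))$ and the recurrence type of the return IET $T_s$. First I would restrict to the full-measure set of $s\in J$ for which $(M,\omega(s))$ has no vertical saddle connection and \eqref{cond:b-t} holds. For such $s$, Remark~\ref{rem:unique} guarantees that $T_s$ has no connection and that $(M,\omega(s))$ is recurrent precisely when $T_s$ is of recurrence type. Hence it suffices to verify the two hypotheses of Theorem~\ref{thm:Mi-Weiss} (whose required $C^2$ regularity of $s\mapsto\Lambda(s)$ is supplied by the standing $C^\infty$ assumptions): that $T_s$ has no connection, already in hand, and that $\int_{I_s}L_s\,d\mu>0$ for every $T_s$-invariant probability measure $\mu$. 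Granting these, Theorem~\ref{thm:Mi-Weiss} makes $T_s$ of recurrence type for a.e.\ $s$, and a final appeal to Remark~\ref{rem:unique} upgrades this to recurrence of $(M,\omega(s))$ for a.e.\ $s$.

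The heart of the matter is the positivity of that integral, and here is how I would compute it. Setting $c_j(s):=\tfrac{d}{ds}(b_j(\Lambda(s))-t_{\pi(j)}(\Lambda(s)))$, the function $L_s$ from \eqref{eq:defL} is the constant $c_j(s)$ on $[b_{j-1}(\Lambda(s)),b_j(\Lambda(s)))$, so for any $T_s$-invariant probability measure $\mu$,
\[\int_{I_s}L_s\,d\mu=\sum_{j=1}^d c_j(s)\,\mu\big([b_{j-1}(\Lambda(s)),b_j(\Lambda(s)))\big).\]
Condition \eqref{cond:b-t} is exactly the statement that $c_j(s)\geq 0$ for all $j$ and $\sum_{j=1}^d c_j(s)>0$. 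Since $\mu\geq 0$, every summand is nonnegative and the total is $\geq 0$; moreover some index $j_0$ has $c_{j_0}(s)>0$. It then remains only to show that $\mu$ assigns positive mass to the non-degenerate interval indexed by $j_0$.

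The decisive ingredient is therefore that every invariant measure of $T_s$ has full support. To obtain this I would use that the absence of a vertical saddle connection makes the vertical flow on $(M,\omega(s))$ minimal, as recalled above, so that its first return map $T_s$ is a minimal IET. For a minimal IET any invariant probability $\mu$ is non-atomic, since an atom would propagate under $T_s$ to either infinitely many atoms of equal mass or to a periodic orbit, both excluded; and $\supp(\mu)=I_s$, for if $\mu(U)=0$ on some nonempty open $U\subset I_s$, then by minimality every infinite forward orbit meets $U$, placing the co-countable set of points with infinite orbit inside $\bigcup_{n\geq 0}T_s^{-n}U$, a set of $\mu$-measure zero by invariance, which contradicts $\mu(I_s)=1$. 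Consequently $\mu\big([b_{j_0-1}(\Lambda(s)),b_{j_0}(\Lambda(s)))\big)>0$, the $j_0$-th summand is strictly positive, and $\int_{I_s}L_s\,d\mu>0$.

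I expect the full-support lemma to be the only genuinely delicate step. Condition \eqref{cond:b-t} by itself yields merely nonnegativity of $L_s$ together with positivity of its total mass, whereas the Minsky--Weiss criterion must be tested against \emph{every} invariant measure, and such a measure could a priori concentrate away from the intervals on which $L_s$ is strictly positive. Minimality, forced by the hypothesis of no vertical saddle connection, is precisely what excludes this degeneracy. Everything else, namely the two passages through Remark~\ref{rem:unique} and the bookkeeping of null sets behind the phrase ``for a.e.\ $s$'', is routine.
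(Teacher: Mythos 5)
Your proof is correct and follows essentially the same route as the paper: reduce via Theorem~\ref{thm:Mi-Weiss} and Remark~\ref{rem:unique} to the positivity of $\int_{I_s}L_s\,d\mu$ for every $T_s$-invariant measure $\mu$, then use minimality of $T_s$ (forced by the absence of vertical saddle connections) to conclude that every such $\mu$ has full support, so that the nonnegative function $L_s$, being strictly positive on some nonempty open interval by \eqref{cond:b-t}, has strictly positive integral. The only difference is that you spell out the standard full-support (and non-atomicity) argument for minimal IETs, which the paper simply invokes in one line.
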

\begin{proof}
In view of Theorem~\ref{thm:Mi-Weiss} and Remark~\ref{rem:unique},  we only need to show that for a.e.\ $s\in J$ we have $\int_{I_s}L_s(x)\,d\mu(x)>0$
for every $T_{s}$-invariant
measure $\mu$ on $I_s$.

Let $s\in J$ be such that $T_s$ has no saddle connection and  \eqref{cond:b-t} holds. By assumption, a.e.\ $s\in J$ satisfies both conditions.
Since $T_s$ is minimal, the topological support of any $T_s$-invariant measure $\mu$ is $I_s$, i.e.\ the $\mu$-measure of every non-empty open set is positive.
By \eqref{cond:b-t}, $L_s:I_s\to\R$ is a non-negative function which is positive on an open interval. Therefore, $\int_{I_s}L_s(x)\,d\mu(x)>0$.

The final recurrence of Teichm\"uller orbits follows from Remark~\ref{rem:unique}.
\end{proof}

\begin{remark}\label{cond:b-t/l}
Suppose that $\ell:J\to\R_{>0}$ is a $C^\infty$-map. Notice that in Corollary~\ref{cor:uniq} the condition \eqref{cond:b-t} can be replaced by
\begin{align*}
&\frac{d}{ds}\frac{b_j(\Lambda(s))-t_{\pi(j)}(\Lambda(s))}{\ell(s)}\geq 0\quad\text{for every}\quad 1\leq j\leq d \quad\text{ with}\\
&\sum_{j=1}^d\frac{d}{ds}\frac{b_j(\Lambda(s))-t_{\pi(j)}(\Lambda(s))}{\ell(s)}> 0.
\end{align*}
Indeed,  we can rescale all  IETs $T_s:I_s\to I_s$ by dividing the length of each interval $I_s$ by $\ell(s)>0$. Then the rescaled IETs
are affine conjugate to $T_s$ and satisfy the condition \eqref{cond:b-t}.
\end{remark}
For every translation surface $(M,\omega)$ there exists a holomorphic differential on $M$, also denoted by $\omega$, so that $\omega=dz$ in all
local coordinates on $M\setminus\Sigma$ and { $\omega$ vanishes on $\Sigma$}. We also treat $\omega$ as a cohomology element in $H^1(M\setminus\Sigma,\C)$ or $H^1(M,\Sigma,\C)$. We also deal with real cohomology elements
$\rp\omega,\ip\omega\in H^1(M\setminus\Sigma,\R)$ $(H^1(M,\Sigma,\R))$.

We denote by $\langle \,\cdot\,,\,\cdot\, \rangle $ the Kronecker pairing, { i.e.\ $\langle \eta,\gamma \rangle=\int_\gamma\eta$ for $\eta\in H^1$ and $\gamma\in H_1$}.
Suppose that $\gamma$ is a vertical saddle connection of length $s>0$ { on $(M,\omega)$}. Then $\gamma$ can be treated as a relative homology
element in $H_1(M,\Sigma,\Z)$ and  we have
\begin{equation}\label{eq:etarel}
\langle \omega,\gamma\rangle =is,\quad\text{in particular}\quad \langle \rp\omega,\gamma\rangle =0.
\end{equation}
If, for example, $\langle \rp\omega,\gamma\rangle \neq 0$ for every $\gamma\in H_1(M,\Sigma,\Z)$, then
this ensures the absence of vertical saddle connections { on $(M,\omega)$}.

\begin{definition}\label{def:xi}
Suppose that a horizontal interval $I$ in $(M,\omega)$ is a global transversal for the vertical flow $(\varphi^v_t)_{t\in\R}$. Then $T_{\omega,I}=T_\Lambda$ for some $\Lambda=(\pi,\lambda)\in S_d\times\R^d_{>0}$.
For every $1\leq j\leq d$  we denote by $\xi_j=\xi_j(\omega,I)\in H_1(M,\Sigma,\Z)$ the homology class of any loop formed by the vertical orbit segment  starting at any $x\in(b_{j-1}(\Lambda),b_{j}(\Lambda))\subset I$ and ending at $T_\Lambda x\in I$ closed by the segment of $I$ that joins $T_\Lambda x$ and $x$.
\end{definition}
\noindent
Then for every $1\leq j\leq d$ we have
\begin{equation}\label{eq:xibt}
\langle \rp\omega,\xi_j\rangle =b_{j}(\Lambda)-t_{\pi (j)}(\Lambda).
\end{equation}
In view of Corollary~\ref{cor:uniq}, the formula \eqref{eq:xibt} can be useful to prove the recurrence of $(M,\omega)$.

\smallskip

We now give effective formulas to compute $\langle \omega,\gamma\rangle $ for $\gamma\in H_1(M\setminus \Sigma,\Z)$ or $\gamma\in H_1(M,\Sigma,\Z)$ relied on \v{C}ech cohomology.
Suppose that $\mathcal{P}=\{P_\alpha:\alpha\in\mathcal A\}$ is a finite partition of the translation surface $(M,\omega)$ into polygons, i.e.\ $\mathcal{P}=\{P_\alpha:\alpha\in\mathcal A\}$ is a finite family of  closed connected and simply connected subsets of $M$, called polygons, such that
\begin{itemize}
\item[$(i)$] for every $\alpha\in \mathcal A$ there exists a chart $\zeta_\alpha:U_\alpha\to \C$ such that $P_\alpha\setminus\Sigma\subset U_\alpha$, $\zeta_\alpha$ has a { continuous} extension
$\bar{\zeta}_\alpha:U_\alpha\cup P_\alpha\to \C$ { such that $\bar{\zeta}_\alpha:P_\alpha\to \bar{\zeta}_\alpha(P_\alpha)$ is a homeomorphism}, $\bar{\zeta}_\alpha(P_\alpha)$ is a polygon in $\C$ and each point from $\bar{\zeta}_\alpha(P_\alpha\cap \Sigma)$ is its corner;
\item[$(ii)$] if $P_\alpha\cap P_\beta\neq \emptyset$ then it is the union of common sides and corners of the polygons $P_\alpha$, $P_\beta$;
\item[$(iii)$] $\bigcup_{\alpha\in \mathcal A}P_\alpha=M$.
\end{itemize}
We call $\mathcal P$  a partition of $(M,\omega)$ into polygons.
Let
\[\into\mathcal P:=\bigcup_{\alpha\in \mathcal A}\into P_\alpha\quad\text{and}\quad\partial\mathcal P:=\bigcup_{\alpha\in \mathcal A}\partial P_\alpha.\]
We denote by $\operatorname{dir}\mathcal P\subset S^1$ the set of directions of all sides in the partition $\mathcal P$.
\begin{definition}\label{def:1}
Let $\gamma:[a,b]\to M$ be a simple curve (possibly closed) with $\#\gamma([a,b])\cap\partial\mathcal P<+\infty$ and $x\in M$. We define a pairing $\langle x,\gamma\rangle \in \C$ as follows:
\begin{itemize}
\item if $x$ does not belong to the curve then $\langle x,\gamma\rangle :=0$;
\item if $x=\gamma(s_0)$ with $s_0\in (a,b)$ or $x=\gamma(a)=\gamma(b)$ and there exists $\vep>0$ such that $\gamma(s_0,s_0+\vep)\subset \into P_\alpha$ and
$\gamma(s_0-\vep,s_0)\subset \into P_\beta$ then $\langle x,\gamma\rangle :=\bar{\zeta}_\beta(x)-\bar{\zeta}_\alpha(x)$;
\item if $x=\gamma(a)\neq \gamma(b)$ and there exists $\vep>0$ such that $\gamma(a,a+\vep)\subset \into P_\alpha$  then $\langle x,\gamma\rangle :=-\bar{\zeta}_\alpha(x)$;
\item if $x=\gamma(b)\neq \gamma(a)$ and there exists $\vep>0$ such that $\gamma(b-\vep,b)\subset \into P_\beta$  then $\langle x,\gamma\rangle :=\bar{\zeta}_\beta(x)$.
\end{itemize}
\end{definition}
Suppose that the curve $\gamma$  does not start and does not end in $x\in M$.
Notice that if $x\in \into\mathcal P$ then $\langle x,\gamma\rangle =0$. If $x\in \partial\mathcal P\setminus \Sigma$ and $\gamma$
{ passes from $P_\alpha$  to $P_{\beta}$ through $x$} then, by definition,
\begin{equation}\label{eq:<>}
\langle x,\gamma\rangle ={\zeta}_\beta(x)-{\zeta}_\alpha(x)=v_{\alpha,\beta}^C,
\end{equation}
where $C$ is  the connected component of $U_\alpha\cap U_\beta$ containing $x$. { $v_{\alpha,\beta}^C$ is the displacement
of the transfer function between local coordinates (see Definition~\ref{def:transsurf})}.

\begin{theorem}\label{thm:<>}
Suppose that $\gamma:[a,b]\to M$ is a simple curve with $\#\gamma([a,b])\cap\partial\mathcal P<+\infty$ such that
\begin{itemize}
\item[$(i)$] $\gamma(b)=\gamma(a)$ and $\gamma([a,b])\cap\Sigma=\emptyset$ (i.e.\ $[\gamma]\in H_1(M\setminus\Sigma,\Z)$), or
\item[$(ii)$] $\gamma(a),\gamma(b)\in\Sigma$ and $\gamma((a,b))\cap\Sigma=\emptyset$ (i.e.\ $[\gamma]\in H_1(M,\Sigma,\Z)$).
\end{itemize}
Then
\[\langle \omega,[\gamma]\rangle =\sum_{x\in\partial \mathcal P}\langle x,\gamma\rangle .\]
In particular, 
\begin{equation}\label{eq:thetasum}
\langle \rp\omega,[\gamma]\rangle =\sum_{x\in\partial \mathcal P}\rp\langle x,\gamma\rangle.
\end{equation}
\end{theorem}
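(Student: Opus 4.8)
The plan is to compute the left-hand side directly as the line integral $\langle\omega,[\gamma]\rangle=\int_\gamma\omega$ (this is the Kronecker pairing) by cutting $\gamma$ into arcs that each lie in a single polygon, evaluating the integral over each arc through the associated chart, and then reorganising the resulting telescoping sum so that it becomes indexed by the boundary points that $\gamma$ meets.

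First I would use the hypothesis $\#\gamma([a,b])\cap\partial\mathcal P<+\infty$ to choose a subdivision $a=u_0<u_1<\cdots<u_n=b$ of the parameter interval whose interior cut points $u_1,\dots,u_{n-1}$ are exactly the parameters at which $\gamma$ meets $\partial\mathcal P$. Since $M\setminus\partial\mathcal P=\bigsqcup_{\alpha}\into P_\alpha$ is a disjoint union of open sets and each arc $\gamma((u_{i-1},u_i))$ is connected and disjoint from $\partial\mathcal P$, it lies in a single $\into P_{\alpha_i}$. Because $\into P_{\alpha_i}\subset U_{\alpha_i}$ and the chart is a translation chart, $\omega=d\zeta_{\alpha_i}$ there, so by the fundamental theorem of calculus $\int_{\gamma|_{(u_{i-1}+\delta,u_i-\delta)}}\omega=\zeta_{\alpha_i}(\gamma(u_i-\delta))-\zeta_{\alpha_i}(\gamma(u_{i-1}+\delta))$; letting $\delta\to0$ and using the continuity of the extension $\bar\zeta_{\alpha_i}$ on the closed polygon (which is precisely what permits evaluation at the boundary, and in case $(ii)$ at the singular, endpoints) yields
\[\int_{\gamma|_{[u_{i-1},u_i]}}\omega=\bar\zeta_{\alpha_i}(\gamma(u_i))-\bar\zeta_{\alpha_i}(\gamma(u_{i-1})).\]

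Summing over $i$ and regrouping the terms by the point at which they are evaluated, each interior cut point $x=\gamma(u_i)$ ($1\le i\le n-1$) collects the two contributions $\bar\zeta_{\alpha_i}(x)$ (from the incoming arc) and $-\bar\zeta_{\alpha_{i+1}}(x)$ (from the outgoing arc), whose sum is exactly $\langle x,\gamma\rangle$ by the interior clause of Definition~\ref{def:1} (with $\beta=\alpha_i$ the polygon before and $\alpha=\alpha_{i+1}$ the polygon after $x$). The only leftover terms are $-\bar\zeta_{\alpha_1}(\gamma(a))$ and $+\bar\zeta_{\alpha_n}(\gamma(b))$ coming from the endpoints. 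I would then split according to the two cases of the statement: in case $(i)$, where $\gamma(a)=\gamma(b)=:x_0$, these two terms combine into $\bar\zeta_{\alpha_n}(x_0)-\bar\zeta_{\alpha_1}(x_0)=\langle x_0,\gamma\rangle$ by the closed-curve clause of Definition~\ref{def:1}; in case $(ii)$, where $\gamma(a),\gamma(b)\in\Sigma$, the two leftover terms match the third and fourth clauses when $\gamma(a)\neq\gamma(b)$ (and the closed-curve clause when they coincide), giving $\langle\gamma(a),\gamma\rangle$ and $\langle\gamma(b),\gamma\rangle$. Since $\langle x,\gamma\rangle=0$ for every $x\in\partial\mathcal P$ that is neither a cut point nor an endpoint, this establishes $\langle\omega,[\gamma]\rangle=\sum_{x\in\partial\mathcal P}\langle x,\gamma\rangle$; taking real parts of this finite sum immediately gives \eqref{eq:thetasum}.

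The routine parts are the subdivision and the telescoping. The step requiring the most care, and which I expect to be the main obstacle, is the boundary evaluation: the chart $\zeta_{\alpha_i}$ is a priori defined only on $U_{\alpha_i}\supset\into P_{\alpha_i}$, so the endpoint values must be produced through the continuous extension $\bar\zeta_{\alpha_i}$ together with the limiting argument $\delta\to0$, and one must check that this limit exists and is legitimate even when the endpoint is a singularity in $\Sigma$ (case $(ii)$) or a vertex of the partition where several polygons meet. Alongside this, careful bookkeeping of the orientation conventions — which polygon is counted as ``before'' and which as ``after'' each crossing — is needed to line the four clauses of Definition~\ref{def:1} up with the signs produced by the telescoping sum.
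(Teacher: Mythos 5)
Your proof is correct and follows essentially the same route as the paper's: subdivide $\gamma$ at its finitely many intersections with $\partial\mathcal P$, integrate $\omega$ over each sub-arc through the chart and its continuous extension $\bar\zeta_{\alpha_i}$, and telescope so that each crossing point contributes exactly $\langle x,\gamma\rangle$ from Definition~\ref{def:1}, with the endpoint terms handled case by case. The only cosmetic difference is that in case $(i)$ the paper normalizes the basepoint to lie in $\into\mathcal P$ (so the two endpoint terms cancel outright), whereas you treat a possible boundary basepoint directly via the closed-curve clause of Definition~\ref{def:1}; both are legitimate.
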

\begin{proof}
Let $a=t_0<t_1<\ldots<t_{n-1}<t_n=b$ be a partition of $[a,b]$ such that for any $1\leq j\leq n$ there exists $\alpha_j\in\mathcal A$ such that $\gamma(t_{j-1},t_j)\subset \into P_{\alpha_j}$.
Then
\begin{align*}
\langle \omega,[\gamma]\rangle &=\int_\gamma\omega=
\sum_{j=1}^n\int_{t_{j-1}}^{t_j}(\bar{\zeta}_{\alpha_j}\circ\gamma)'(t)\,dt=
\sum_{j=1}^n\big(\bar{\zeta}_{\alpha_j}(\gamma(t_j))-\bar{\zeta}_{\alpha_j}(\gamma(t_{j-1}))\big)\\
&=\sum_{j=1}^{n-1}\big(\bar{\zeta}_{\alpha_j}(\gamma(t_j))-\bar{\zeta}_{\alpha_{j+1}}(\gamma(t_{j}))\big)+
\bar{\zeta}_{\alpha_n}(\gamma(t_n))-\bar{\zeta}_{\alpha_1}(\gamma(t_{0}))
\\
&=\sum_{j=1}^{n-1}\langle \gamma(t_{j}),\gamma \rangle  + \bar{\zeta}_{\alpha_n}(\gamma(b))-\bar{\zeta}_{\alpha_1}(\gamma(a)).
\end{align*}

In the case $(i)$ we can assume that $\gamma(a)=\gamma(b)\in \into\mathcal P$. Then $\alpha_1=\alpha_n$ and $\bar{\zeta}_{\alpha_n}(\gamma(b))=\bar{\zeta}_{\alpha_1}(\gamma(a))$.
Therefore,
\[\langle \omega,[\gamma]\rangle =\sum_{j=1}^{n-1}\langle \gamma(t_{j}),\gamma\rangle =\sum_{x\in\partial \mathcal P}\langle x,\gamma\rangle .\]

In the case $(ii)$ we have $\gamma(a),\gamma(b)\in\Sigma\subset\partial \mathcal P$. As
$\langle \gamma(a),\gamma\rangle =-\bar{\zeta}_{\alpha_1}(\gamma(a))$ and $\langle \gamma(b),\gamma\rangle =\bar{\zeta}_{\alpha_n}(\gamma(b))$,
we have
\[\langle \omega,[\gamma]\rangle =\sum_{j=0}^{n}\langle \gamma(t_{j}),\gamma\rangle =\sum_{x\in\partial \mathcal P}\langle x,\gamma\rangle,\]
which completes the proof.
\end{proof}

\begin{definition}\label{def:DBE}
Suppose that  the vertical direction does not belong to $\operatorname{dir}\mathcal P$. Denote by:
\begin{itemize}
 \item $\widehat{D}=\widehat{D}(\omega,\mathcal P)$ the set of triples $(\alpha,\beta,C)$ ({ $\alpha,\beta\in\mathcal A$ and} $C$ is a connected component of $U_\alpha\cap U_\beta$) for which there is a vertical orbit segment $\{\varphi^v_tx:t\in[-\vep,\vep]\}\subset C$ ($\vep>0$) such that $\varphi^v_tx\in P_\beta$ for $t\in[-\vep,0]$ and $\varphi^v_tx\in P_\alpha$ for $t\in[0,\vep]$;
 \item ${D}={D}(\omega,\mathcal P)$ the subset of all triples $(\alpha,\beta, C)\in\widehat{D}$ such that the point $x$ belongs to the interior of a common side of $P_\alpha$ and $P_\beta$;
 \item $B=B(\omega,\mathcal P)$ the set of pairs $(\sigma,\alpha)\in\Sigma\times \mathcal A$ for which $\sigma\in P_\alpha$ and there is a vertical curve $\gamma:[0,\vep]\to P_\alpha$ with $\gamma(0)=\sigma$ and $\bar{\zeta}_\alpha(\gamma(t))=\bar{\zeta}_\alpha(\sigma)+it$ for $t\in[0,\vep]$.
 \item $E=E(\omega,\mathcal P)$ the set of pairs $(\sigma,\beta)\in\Sigma\times \mathcal A$ for which $\sigma\in P_\beta$ and there is a vertical curve $\gamma:[-\vep,0]\to P_\beta$ with $\gamma(0)=\sigma$ and $\bar{\zeta}_\beta(\gamma(t))=\bar{\zeta}_\beta(\sigma)+it$ for $t\in[-\vep,0]$.
\end{itemize}
\end{definition}

\begin{remark}\label{DBEtheta}
Suppose that $\mathcal P$ is a partition of $(M,\omega)$ into polygons  and $\theta\notin \operatorname{dir}\mathcal P$. In a similar way as in Definition~\ref{def:DBE} we can define
the sets $D_\theta$, $B_\theta$ and $E_\theta$ using orbits in direction $\theta$ instead of vertical orbits.

Let us consider the rotated translation surface $(M,r_{\pi/2-\theta}\omega)$. Then the rotated partition $r_{\pi/2-\theta}\mathcal P$ is a partition of  $(M,r_{\pi/2-\theta}\omega)$
into polygons such that  the vertical directions does not belong to $\operatorname{dir}r_{\pi/2-\theta}\mathcal P$ and
\begin{align*}
{D}(r_{\pi/2-\theta}\omega,&r_{\pi/2-\theta}\mathcal P)={D}_\theta(\omega,\mathcal P),\ {B}(r_{\pi/2-\theta}\omega,r_{\pi/2-\theta}\mathcal P)={B}_\theta(\omega,\mathcal P),\\ &{E}(r_{\pi/2-\theta}\omega,r_{\pi/2-\theta}\mathcal P)={E}_\theta(\omega,\mathcal P).
\end{align*}
\end{remark}

\begin{lemma}\label{lem:cech}
For every  $(\alpha,\beta,C)\in \widehat{D}\setminus {D}$ there exists a sequence $\{(\alpha_{j+1},\alpha_{j}, C_j)\}_{j=1}^n$ of elements in ${D}$ such that $\alpha_1=\beta$, $\alpha_{n+1}=\alpha$ and
\[v_{\alpha,\beta}^C=\sum_{j=1}^nv_{\alpha_{j+1},\alpha_{j}}^{C_j}.\]
\end{lemma}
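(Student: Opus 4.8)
The plan is to realize the jump $v_{\alpha,\beta}^C$ as a telescoping sum of jumps across \emph{genuine} common sides, obtained by walking once around the point where the witnessing vertical orbit crosses $\partial\mathcal P$. First I would pin down the local picture. By definition of $\widehat D$ there is a point $x\in C$ and $\vep>0$ with $\{\varphi^v_t x:t\in[-\vep,\vep]\}\subset C$, $\varphi^v_t x\in P_\beta$ for $t\le 0$ and $\varphi^v_t x\in P_\alpha$ for $t\ge 0$. Since the vertical flow is defined only on $M\setminus\Sigma$, necessarily $x\notin\Sigma$, so a neighbourhood of $x$ carries (via the translation charts) a flat structure of total cone angle $2\pi$, and the polygons containing $x$ subdivide it into finitely many angular sectors meeting at $x$; each polygon contributes exactly one sector, since $\bar\zeta_\gamma$ maps $P_\gamma$ homeomorphically onto a polygon in $\C$. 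Because the vertical direction is not in $\operatorname{dir}\mathcal P$, no side issuing from $x$ is vertical, so the downward ray lies in the interior of the sector of $P_\beta$ and the upward ray in the interior of the sector of $P_\alpha$. The key preliminary observation is that $(\alpha,\beta,C)\notin D$ forces $x$ to be a \emph{vertex} of the partition: if $x$ lay in the relative interior of a side $e$, then $e$ would be the common side of the two polygons meeting along it, namely $P_\beta$ below and $P_\alpha$ above, and the triple would be in $D$.

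Next I would construct the chain. Rotating counterclockwise about $x$ through the right half-plane, list the sectors met from the downward ray to the upward ray as $P_{\alpha_1}=P_\beta,P_{\alpha_2},\dots,P_{\alpha_{n+1}}=P_\alpha$; consecutive sectors $P_{\alpha_j},P_{\alpha_{j+1}}$ share the ray issuing from $x$ at some angle $\phi_j\in(-\pi/2,\pi/2)$. By condition $(ii)$ this shared ray is part of a genuine common side, so any $p_j$ on it close enough to $x$ lies in the relative interior of a common side of $P_{\alpha_j}$ and $P_{\alpha_{j+1}}$. A short computation with the counterclockwise normal $(-\sin\phi_j,\cos\phi_j)$ shows that for a rightward side ($\cos\phi_j>0$) the upward vertical points to the counterclockwise side; hence the vertical orbit through $p_j$ passes from $P_{\alpha_j}$ (below) to $P_{\alpha_{j+1}}$ (above). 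Letting $C_j$ be the component of $U_{\alpha_j}\cap U_{\alpha_{j+1}}$ containing $p_j$ (equivalently $x$), this says precisely that $(\alpha_{j+1},\alpha_j,C_j)\in D$.

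Finally I would telescope. Each $x$ lies in $P_{\alpha_j}\setminus\Sigma\subset U_{\alpha_j}$ and in each $C_j$ as well as in $C$, so by the translation-cocycle convention of Definition~\ref{def:transsurf} and \eqref{eq:<>} the transition constants are evaluated at $x$ as $v_{\alpha_{j+1},\alpha_j}^{C_j}=\bar\zeta_{\alpha_j}(x)-\bar\zeta_{\alpha_{j+1}}(x)$ and $v_{\alpha,\beta}^{C}=\bar\zeta_\beta(x)-\bar\zeta_\alpha(x)$. The sum then reads
\[
\sum_{j=1}^n v_{\alpha_{j+1},\alpha_j}^{C_j}=\sum_{j=1}^n\big(\bar\zeta_{\alpha_j}(x)-\bar\zeta_{\alpha_{j+1}}(x)\big)=\bar\zeta_{\alpha_1}(x)-\bar\zeta_{\alpha_{n+1}}(x)=\bar\zeta_\beta(x)-\bar\zeta_\alpha(x)=v_{\alpha,\beta}^{C},
\]
which is the claimed identity.

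I expect the main obstacle to be the orientation bookkeeping in the middle step: one must verify that the chosen sense of rotation makes every crossing an element of $D$ with the correct ``below/above'' roles, and check that the hypothesis $(\alpha,\beta,C)\notin D$ rules out the degenerate configuration in which $P_\beta$ and $P_\alpha$ are directly adjacent across a rightward side within $C$ (which would otherwise return the single-link chain to $D$ and contradict the assumption). Everything else — the flatness at $x$, the identification of each shared ray with a genuine common side through $(ii)$, and the telescoping — is routine once the local sector configuration is fixed.
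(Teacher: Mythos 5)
Your proof is correct, and while it produces essentially the same chain of polygons as the paper's proof, it establishes the key identity by a genuinely different and more elementary mechanism. The paper also localizes at the corner $x$, but instead of rotating around it, it pushes the vertical segment horizontally by a small $\delta$: the pushed-off segment crosses only relative interiors of sides, which yields the chain $(\alpha_{j+1},\alpha_j,C_j)\in D$, and the identity $v^C_{\alpha,\beta}=\sum_{j}v^{C_j}_{\alpha_{j+1},\alpha_j}$ is then deduced by pairing $\omega$ with the null-homologous oriented boundary of the rectangle $R$ swept out between the two segments, via Theorem~\ref{thm:<>}. Your sector walk gives the same chain (the sectors of the right half-plane at $x$ are exactly the polygons crossed by the nearby pushed-off segment, in order), but you bypass the homological step entirely: since $C$ and all the $C_j$ contain the common regular point $x$, every transition constant can be evaluated there, and the identity becomes a pure telescoping of the \v{C}ech cocycle of Definition~\ref{def:transsurf}, namely $\sum_{j}\bigl(\bar\zeta_{\alpha_j}(x)-\bar\zeta_{\alpha_{j+1}}(x)\bigr)=\bar\zeta_\beta(x)-\bar\zeta_\alpha(x)$. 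The trade-off is where the work sits. The paper's route buys economy of local analysis: transversality of the pushed-off vertical segment makes every crossing automatically an element of $D$, so it never needs your auxiliary facts that each polygon meets $x$ in a single sector (embeddedness from condition $(i)$), that condition $(ii)$ excludes T-junctions, or the counterclockwise orientation bookkeeping — though it does need the small rectangle $R$ to avoid $\Sigma$ and all corners other than $x$. Your route buys self-containedness: no integration, no null-homology, no appeal to Theorem~\ref{thm:<>}, just the cocycle relation evaluated at one point; and your sign conventions match the ones actually used in the paper (note that the displayed sentence around \eqref{eq:<>} has the roles of $\alpha$ and $\beta$ swapped relative to Definition~\ref{def:1}, so your care here is warranted). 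Both arguments degenerate gracefully to the empty chain when $\alpha=\beta$ and no side separates the two rays.
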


\begin{proof}
Suppose that $(\alpha,\beta,C)\in \widehat{D}\setminus {D}$. Then there is  a vertical  orbit segment $\{\varphi^v_tx:t\in[-\vep,\vep]\}\subset C$ such that $\varphi^v_t x\in \into P_\beta$ for $t\in[-\vep,0)$,  $\varphi^v_tx\in \into P_\alpha$ for $t\in(0,\vep]$ and $x$ is a common corner of $P_\alpha$ and $P_\beta$. Then there exists $\delta>0$ such that
\begin{itemize}
\item the rectangle $R:=\{\varphi^v_t\varphi^h_s x:s\in[0,\delta],t\in[-\vep,\vep]\}\subset M\setminus \Sigma$ is well defined;
\item $R$ does not contain any corner of the partition $\mathcal P$ other then $x$;
\item $\varphi^v_{-\vep} \varphi^h_s x\in \into P_\beta$  and $\varphi^v_\vep\varphi^h_s x\in \into P_\alpha$ for $s\in[0,\delta]$.
\end{itemize}
Let us consider the vertical orbit segment $\{\varphi^v_t\varphi^h_\delta x:t\in[-\vep,\vep]\}$. It has finitely many intersection
points with $\partial\mathcal P$ and all of them are not corners. Let
\[-\vep=t_0<t_1<\ldots<t_{n}<t_{n+1}=\vep\ \text{ and }\
\beta=\alpha_1,\alpha_2,\ldots,\alpha_{n},\alpha_{n+1}=\alpha\]
be elements of $\mathcal A$ such that
\[\varphi^v_t\varphi^h_\delta x\in P_{\alpha_j}\quad\text{for all}\quad t\in[t_{j-1},t_j]\ \text{ and }\ 1\leq j\leq n+1.\]
For every  $1\leq j\leq n$ denote by $C_j$ the connected component of $U_{\alpha_j}\cap U_{\alpha_{j+1}}$ that contains $\varphi^v_{t_j}\varphi^h_\delta x\in P_{\alpha_j}\cap P_{\alpha_{j+1}}$. Then $(\alpha_{j+1},\alpha_{j},C_j)\in {D}$ for every $1\leq j\leq n$.

Let $\gamma$ be the boundary (oriented) of the rectangle $R$. As $[\gamma]\in H_1(M\setminus\Sigma,\R)$ is the zero homology element,
we have $\langle \omega,[\gamma]\rangle =0$. Therefore, by Theorem~\ref{thm:<>} and \eqref{eq:<>}, we have
\[0=\langle \omega,[\gamma]\rangle =\langle x,\gamma\rangle +\sum_{j=1}^n\langle \varphi^v_{t_j}\varphi^h_\delta x,\gamma\rangle =-v_{\alpha,\beta}^C+\sum_{j=1}^nv_{\alpha_{j+1},\alpha_{j}}^{C_j}.\]
which completes the proof.
\end{proof}

Let us come back to a $C^\infty$-curve  $J\ni s\mapsto\omega(s)\in\mathcal M(M,\Sigma)$.
Suppose that there is a finite open cover $(U_\alpha)_{\alpha\in\mathcal A}$ of $M\setminus\Sigma$ and for every $s\in J$
{ there exists}
a partition $\mathcal P(s)=\{P_\alpha(s):\alpha\in \mathcal A\}$ of $(M,\omega(s))$ into polygons so that $ P_{\alpha}(s)\setminus\Sigma\subset U_\alpha$ for every $\alpha\in \mathcal A$ and $s\in J$. Moreover,
assume that for every $\alpha\in \mathcal A$ the polygon $P_\alpha(s)$ and the corresponding chart $\zeta^s_\alpha:U_\alpha\to\C$
vary $C^\infty$-smoothly with $s\in J$. It follows that for every  connected component $C$ of $U_\alpha\cap U_\beta$ the
 map \[J\ni s\mapsto v^C_{\alpha,\beta}(s)={ \zeta^s_\beta(x)-\zeta^s_\alpha(x)}\in \C\]
{ ($x$ is any element of $C$)} is of class $C^\infty$.

Assume that the vertical direction does not belong to  $\operatorname{dir}\mathcal P(s)$ for all $s\in J$. Then the sets $D(\omega(s),\mathcal P(s))$, $B(\omega(s),\mathcal P(s))$, $E(\omega(s),\mathcal P(s))$ (see Definition~\ref{def:DBE}) do not depend on $s\in J$. Let us consider three finite subsets of $C^\infty(J,\C)$:
\begin{align*}
\mathscr D &:=\{s\mapsto v^C_{\alpha,\beta}(s):(\alpha,\beta,C)\in D\},\\
\mathscr B &:=\{s\mapsto-\bar{\zeta}^s_{\alpha}(\sigma):(\sigma,\alpha)\in B\},\\
\mathscr E &:=\{s\mapsto \bar{\zeta}^s_{\beta}(\sigma):(\sigma,\beta)\in E\}.
\end{align*}
{ If $s\in J$ is fixed, then
\begin{itemize}
\item the values of functions from the family $\mathscr D$ at the point $s$ indicate all displacements
between local coordinates when a vertical upward curve passes between polygons (through a common side);
\item the values of functions from the family $\mathscr B$ at the point $s$ indicate all opposite local coordinates
of singular points when a vertical upward curve starts at a singularity;
\item the values of functions from the family $\mathscr E$ at the point $s$ indicate all local coordinates
 of singular points when a vertical downward curve starts at a singularity.
\end{itemize}
}
For any pair of $C^1$-maps $f,g:J\to\R$ we define their bracket $[f,g]:J\to\R$ by $[f,g](s)=f'(s)g(s)-f(s)g'(s)$ for $s\in J$.

\begin{theorem}\label{thm:gencrit}
Let $\ell:J\to\R_{>0}$ be a $C^\infty$-map. Suppose that
\begin{itemize}
\item[$(i)$] for any $f\in \mathscr B$, $g\in \mathscr E$
and any sequence $(n_h)_{h\in \mathscr D}$ of numbers in $\Z_{\geq 0}$ such that the map $f+g+\sum_{h\in \mathscr D}n_hh$ is non-zero we have
\[\rp f(s)+\rp g(s)+\sum_{h\in \mathscr D} n_h\rp h(s)\neq 0\ \text{ for a.e. }\ s\in J;\]
\item[$(ii_+)$] $[\rp h,\ell](s)\geq 0$ for all $h\in  \mathscr D$ and $s\in J$  with
\[\sum_{h\in  \mathscr D}[\rp h,\ell](s)> 0 \quad\text{for a.e. $s\in J$, or}\]
\item[$(ii_-)$] $[\rp h,\ell](s)\leq 0$ for all $h\in  \mathscr D$ and $s\in J$  with
\[\sum_{h\in  \mathscr D}[\rp h,\ell](s)< 0\quad\text{for a.e.}\ s\in J.\]
\end{itemize}
Then $(M,\omega(s))$ is recurrent for a.e.\ $s\in J$.
\end{theorem}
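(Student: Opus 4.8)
The plan is to deduce the statement from Corollary~\ref{cor:uniq} in the normalized form of Remark~\ref{cond:b-t/l}. Because the combinatorial data $D$, $B$, $E$ are independent of $s$ and the partition $\mathcal P(s)$ varies $C^\infty$-smoothly, I would first fix a horizontal global transversal $I_s$ for the vertical flow on $(M,\omega(s))$ and build the exchanged intervals combinatorially, obtaining an IET $T_s=T_{\Lambda(s)}$ with an $s$-independent permutation $\pi$ and a $C^\infty$ length vector $\lambda(s)$ (restricting, if necessary, to the subintervals of $J$ on which no interval $\lambda_i(s)$ degenerates, whose endpoints lie in a null set, so that a.e.\ statements pass back to $J$). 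It then suffices to show, for a.e.\ $s$, that $(M,\omega(s))$ has no vertical saddle connection and that the normalized displacements $(b_j(\Lambda(s))-t_{\pi(j)}(\Lambda(s)))/\ell(s)$ satisfy the monotonicity of Remark~\ref{cond:b-t/l}.

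First I would dispose of vertical saddle connections using hypothesis $(i)$. Fix $s$ and suppose $\gamma$ is a vertical saddle connection, so $\gamma(a),\gamma(b)\in\Sigma$ and $[\gamma]\in H_1(M,\Sigma,\Z)$. Applying Theorem~\ref{thm:<>}$(ii)$ and replacing every crossing through a corner by a chain of interior crossings via Lemma~\ref{lem:cech}, the pairing decomposes as $\langle\omega(s),\gamma\rangle=f(s)+g(s)+\sum_{h\in\mathscr D}n_h h(s)$, where the endpoint terms $f=-\bar{\zeta}^{\,s}_{\alpha_1}(\gamma(a))\in\mathscr B$ and $g=\bar{\zeta}^{\,s}_{\alpha_n}(\gamma(b))\in\mathscr E$ record the outgoing and incoming separatrix at the two singularities, and $n_h\in\Z_{\geq 0}$ counts the interior side-crossings of type $h$. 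By \eqref{eq:etarel} we have $\langle\omega(s),\gamma\rangle\in i\R_{>0}$, so the function $f+g+\sum_h n_h h$ is not identically zero, while its real part is $\langle\rp\omega(s),\gamma\rangle=0$. Hypothesis $(i)$ forces this to occur only on a null set; as there are countably many admissible triples $(f,g,(n_h))$, their union is still null, and for a.e.\ $s$ no vertical saddle connection exists. For such $s$ the vertical flow is minimal, $I_s$ is a global transversal, and $T_s$ has no connection (Remark~\ref{rem:unique}).

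Next I would verify the monotonicity from $(ii_\pm)$. By Definition~\ref{def:xi} and \eqref{eq:xibt}, $b_j(\Lambda(s))-t_{\pi(j)}(\Lambda(s))=\langle\rp\omega(s),\xi_j\rangle$, where $\xi_j$ is the loop formed by a vertical first-return segment closed up along $I_s$. Applying Theorem~\ref{thm:<>}$(i)$ to $\xi_j$ and reducing corner crossings via Lemma~\ref{lem:cech}: since $\xi_j$ meets no singularity, no $\mathscr B$- or $\mathscr E$-terms occur, and choosing the representative so that only the vertical portion contributes to the boundary sum yields $\langle\rp\omega(s),\xi_j\rangle=\sum_{h\in\mathscr D}n^{(j)}_h\,\rp h(s)$ with integers $n^{(j)}_h\geq 0$ that, by constancy of $D$, do not depend on $s$. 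Since $[\,\cdot\,,\ell]$ is $\R$-bilinear,
\[\frac{d}{ds}\frac{b_j(\Lambda(s))-t_{\pi(j)}(\Lambda(s))}{\ell(s)}=\frac{[\langle\rp\omega(s),\xi_j\rangle,\ell](s)}{\ell(s)^2}=\frac{1}{\ell(s)^2}\sum_{h\in\mathscr D}n^{(j)}_h\,[\rp h,\ell](s).\]
Under $(ii_+)$ every $[\rp h,\ell]\geq 0$, so each such derivative is $\geq 0$; summing over $j$ and using that every $h\in\mathscr D$ is crossed by at least one first-return segment (minimality), whence $\sum_j n^{(j)}_h\geq 1$, the total is $\geq \ell^{-2}\sum_h[\rp h,\ell]>0$ for a.e.\ $s$. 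This is precisely the condition of Remark~\ref{cond:b-t/l}, and Corollary~\ref{cor:uniq} gives recurrence of $(M,\omega(s))$ for a.e.\ $s$. Case $(ii_-)$ follows by the substitution $s\mapsto -s$, which reverses the orientation of $J$ and flips the sign of every $[\rp h,\ell]$.

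The main obstacle I expect is the bookkeeping in the two homological decompositions: one must check that a saddle connection produces exactly one $\mathscr B$- and one $\mathscr E$-term while the loops $\xi_j$ produce \emph{only} $\mathscr D$-terms, which requires a careful treatment of the closing segment along $I_s$ (so that it contributes no spurious boundary crossings) and a clean use of Lemma~\ref{lem:cech} to absorb all corner crossings into $D$. Equally delicate is ensuring that the multiplicities $n^{(j)}_h$ and the permutation $\pi$ are genuinely independent of $s$ on each subinterval, so that the bracket identity above is linear and $(ii_\pm)$ can be applied termwise; this is where the $s$-independence of $D$, $B$, $E$ and the smoothness of $\mathcal P(s)$ are essential.
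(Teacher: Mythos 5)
Your overall route coincides with the paper's: reduce to Corollary~\ref{cor:uniq} via Remark~\ref{cond:b-t/l}; exclude vertical saddle connections using hypothesis $(i)$ together with Theorem~\ref{thm:<>} and Lemma~\ref{lem:cech} (countably many admissible triples $(f,g,(n_h))$, each excluded off a null set); express $b_j(\Lambda(s))-t_{\pi(j)}(\Lambda(s))$ through the loops $\xi_j$ of Definition~\ref{def:xi} and \eqref{eq:xibt} as a non-negative integer combination of the functions $\rp h$, $h\in\mathscr D$; and deduce $(ii_-)$ from $(ii_+)$ by reversing the parameter. The saddle-connection step and the inequality \eqref{eq:fin} are argued exactly as in the paper (note that the paper obtains only \emph{local} constancy of the multiplicities $n^j_h$, on a neighbourhood of each $s_0\in\Delta$ after choosing the base points $x^j_s$ so the loops avoid corners; local constancy suffices since the bracket identity is pointwise, and you flag this delicacy yourself).

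There is, however, one step that does not hold as you state it: the claim that minimality gives $\sum_j n^{(j)}_h\geq 1$ for \emph{every} $h\in\mathscr D$. The multiplicities $n^{(j)}_h$ are crossing numbers of the $d$ \emph{chosen} representative loops $\gamma^j_s$, one per exchanged interval. Minimality guarantees that each side of the partition is crossed by \emph{some} first-return segment, but that segment may start at a point of $(b_{j-1}(\Lambda(s)),b_j(\Lambda(s)))$ different from the chosen $x^j_s$: the first-return segments over a single continuity interval sweep a tower which may contain corners of $\mathcal P(s)$, so segments from different parts of the tower cross different sides, and a short side lying inside one tower can be missed by all $d$ chosen loops, making $\sum_j n^{(j)}_h=0$ for that $h$. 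The conclusion survives because of choice-independence: the class $\xi_j$, hence $[b_j(\Lambda(s))-t_{\pi(j)}(\Lambda(s)),\ell(s)]$, does not depend on which representative is used, so given $h_0$ with $[\rp h_0,\ell](s_0)>0$ one may re-choose $x^j_{s_0}$ (by minimality) so that its loop crosses the particular side attached to $h_0$ and avoids corners; this yields $[b_j(\Lambda(s_0))-t_{\pi(j)}(\Lambda(s_0)),\ell(s_0)]\geq[\rp h_0,\ell](s_0)>0$ for that $j$, and since all other terms are non-negative the sum \eqref{eq:fin1} is positive. This is precisely how the paper proves \eqref{eq:fin1} (phrased there as a contradiction). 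The repair stays entirely within your framework and does not alter the structure of the proof, but as written the inference from minimality to $\sum_j n^{(j)}_h\geq 1$ is invalid.
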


\begin{proof}
The proof relies on using Corollary~\ref{cor:uniq} combined with Remark \ref{cond:b-t/l}.
First we show that for a.e.\ $s\in J$ there is no vertical saddle connection on $(M,\omega(s))$. By assumption $(i)$, there exists
a subset $J_0\subset J$ of full Lebesgue measure such that for every $s\in J_0$ if  $f\in \mathscr B$, $g\in \mathscr E$ and $(n_h)_{h\in \mathscr D}$ is a sequence of numbers in $\Z_{\geq 0}$ with $f+g+\sum_{h\in \mathscr D}n_hh$ being non-zero map, then
\begin{equation}\label{neq:f+g+h}
 \rp f(s)+\rp g(s)+\sum_{h\in \mathscr D}n_h\rp h(s)\neq 0.
\end{equation}

 We show that for every $s\in J_0$ there is no vertical saddle connection on $(M,\omega(s))$. Indeed, suppose contrary to our claim
that $\gamma:[0,\tau]\to M$ is a vertical saddle connection on $(M,\omega(s))$, i.e.\ $\gamma(0),\gamma(\tau)\in\Sigma$,
$\gamma(0,\tau)\subset M\setminus \Sigma$ and $\gamma'(t)=i$ for $t\in(0,\tau)$ in local coordinates on $(M,\omega(s))$.  Then,  by  Theorem~\ref{thm:<>},
\begin{equation}\label{eq:tau}
i\tau=\langle \omega(s),[\gamma]\rangle =\sum_{x\in\partial \mathcal P(s)}\langle x,\gamma\rangle .
\end{equation}
Let
\[0=t_0<t_1<\ldots<t_{n}<t_{n+1}=\tau\ \text{ and }\
\alpha_1,\alpha_2,\ldots,\alpha_{n},\alpha_{n+1}\in\mathcal{A}\]
be such that
\[\gamma(t)\in \into P_{\alpha_j}(s)\quad\text{for all}\quad t\in(t_{j-1},t_j)\ \text{ and }\ 1\leq j\leq n+1.\]
For every $1\leq j\leq n$ denote by $C_j$ the connected component of $U_{\alpha_j}\cap U_{\alpha_{j+1}}$ that contains
$\gamma(t_j)\in P_{\alpha_j}\cap P_{\alpha_{j+1}}$. Then
\[(\gamma(0),\alpha_0)\in B,\quad (\gamma(\tau),\alpha_{n+1})\in E, \quad(\alpha_{j+1},\alpha_{j},C_j)\in \widehat{D}\]
for every $1\leq j\leq n$ and
\[\sum_{x\in\partial \mathcal P(s)}\langle x,\gamma\rangle =\sum_{j=0}^{n+1}\langle \gamma(t_j),\gamma\rangle =
-\bar{\zeta}^s_{\alpha_0}(\gamma(0))+\sum_{j=1}^nv_{\alpha_{j+1},\alpha_j}^{C_j}(s)+\bar{\zeta}^s_{\alpha_{n+1}}(\gamma(\tau)).\]
In view of Lemma~\ref{lem:cech}, \eqref{eq:tau} and by the definition of sets $\mathscr D$, $\mathscr B$, $\mathscr E$,
there exist $f\in \mathscr B$, $g\in \mathscr E$  and a sequence $(n_h)_{h\in \mathscr D}$ of numbers in $\Z_{\geq 0}$ such that
\begin{equation}\label{eq:neq0}
i\tau=\sum_{x\in\partial \mathcal P(s)}\langle x,\gamma\rangle =f(s)+g(s)+\sum_{h\in \mathscr D}n_hh(s).
\end{equation}
Hence
\begin{equation}\label{eq:zero}
0=\rp\langle \omega(s),[\gamma]\rangle
=\rp f(s)+\rp g(s)+\sum_{h\in \mathscr D}n_h\rp h(s).
\end{equation}
By \eqref{eq:neq0}, the map $f+g+\sum_{h\in \mathscr D}n_hh$ is non-zero. As $s\in J_0$,   \eqref{eq:zero} contradicts \eqref{neq:f+g+h}. This yields the absence of vertical  saddle connections.

\smallskip
Fix  $\alpha_0\in\mathcal A$ and for every $s\in J$ choose a vertical interval $I_s\subset \into P_{\alpha_0}(s)$ so that the map $s\mapsto I_s$ is of class $C^\infty$. Then for every $s_0\in J_0$ (the subset $J_0\subset J$ is defined in the previous paragraph) the interval $I_{s_0}$ is a global transversal for the vertical flow on
$(M,\omega(s_0))$.
Since $s\mapsto (M,\omega(s))$ is a $C^\infty$-curve in $\mathcal{M}(M,\Sigma)$ and the choice of the interval $I_s$ in $(M,\omega(s))$ is smooth, for every $s_0\in J_0$ there exists $\vep>0$
such that for every $s\in(s_0-\vep,s_0+\vep)$ the interval $I_s$ is a global transversal for the vertical flow on
$(M,\omega(s))$ and the corresponding first return map $T_{\omega(s),I_s}=T_s:I_s\to I_s$ has the same combinatorial data (the number of exchanged intervals and permutation) as $T_{s_0}$. It follows that there exists a countable family $\mathcal{J}$ of pairwise disjoint open subintervals in $J$ such that:
\begin{itemize}
\item[(\emph{i})] the complement of $\bigcup_{\Delta\in\mathcal{J}}\Delta$ in $J$ has zero Lebesgue measure;
\item[(\emph{ii})] $I_s\subset \into P_{\alpha_0}(s)$ is a global transversal for every  $s\in \bigcup_{\Delta\in\mathcal{J}}\Delta$;
\item[(\emph{iii})] for every $\Delta\in\mathcal{J}$ all IETs $T_s$, $s\in \Delta$ have the same combinatorial data.
\end{itemize}
Therefore, it suffices to show that for every $\Delta\in\mathcal{J}$ and for a.e.\ $s\in \Delta$ the translation surface
$(M,\omega(s))$ is recurrent.

Fix $\Delta\in\mathcal{J}$. Then there exist $d\geq 2$, { a permutation $\pi\in S_d$}  and a $C^\infty$-map $\Delta\ni s \mapsto \Lambda(s)=(\pi,\lambda(s))\in S_d\times \R^d_{>0}$ such that $T_s=T_{\Lambda(s)}$ for all $s\in \Delta$. In view of Corollary~\ref{cor:uniq} combined with Remark~\ref{cond:b-t/l}, we need to show that for a.e.\ $s\in \Delta$  we have
\begin{align}
\label{eq:fin}
&[b_j(\Lambda(s))-t_{\pi(j)}(\Lambda(s)),\ell(s)]\geq 0\ \text{ for }\ 1\leq j\leq d\ \text{ and }\\
\label{eq:fin1}
&\sum_{j=1}^d[b_j(\Lambda(s))-t_{\pi(j)}(\Lambda(s)),\ell(s)]>0 .
\end{align}

 For every $s\in \Delta$ and $1\leq j\leq d$ let $\xi_j(s)=\xi_j(\omega(s),I_s)\in H_1(M,\Sigma,\Z)$ be the homology element defined in Definition~\ref{def:xi}. Then, by \eqref{eq:xibt},
\begin{equation*}
\rp \langle \omega(s),\xi_j(s)\rangle =b_{j}(\Lambda(s))-t_{\pi (j)}(\Lambda(s))\ \text{ for every }\ s\in\Delta.
\end{equation*}
Since $\xi_j(s)$ is the homology class of a loop $\gamma^j_s$ formed by the segment of the vertical orbit in  $(M,\omega(s))$ starting at any $x^j_s\in(b_{j-1}(\Lambda(s)),b_{j}(\Lambda(s)))\subset I_s$ and ending at $T_sx^j_s\in I_s$ closed by the segment of $I_s\subset \into P_{\alpha_0}(s)$ that joins $T_sx^j_s$ and $x^j_s$, by Theorem~\ref{thm:<>} for every $s\in \Delta$ we have
\begin{align}\label{eq:omxi}
\begin{aligned}
\rp\langle \omega(s),\xi_j(s)\rangle &=\sum_{x\in\partial \mathcal P(s)}\rp\langle x,\gamma^j_s\rangle \\
&=\sum_{(\alpha,\beta,C)\in \widehat{D}}n^j_{(\alpha,\beta,C)}(s)\rp v_{\alpha,\beta}^C(s),
\end{aligned}
\end{align}
where $n^j_{(\alpha,\beta,C)}(s)$ is the number of meeting points $x\in \partial P(s)$ of $\gamma^j_s$ with $\partial P(s)$ such that $\gamma^j_s$ passes from $\into P_\beta(s)$ to $\into P_\alpha(s)$ through $x$
and $x$ belongs to the connected component $C$ of $U_\alpha\cap U_\beta$.

Take any $s_0\in\Delta$. Since the partition $\mathcal P(s_0)$ into polygons has finitely many corners, for every $1\leq j\leq d$ we can find $x^j_{s_0}\in (b_{j-1}(\Lambda(s_0)),b_{j}(\Lambda(s_0)))$ such that the corresponding loops $\gamma^j_{s_0}$, $1\leq j\leq d$
do not meet the corners of $\mathcal P(s_0)$. Then we choose other points $x^j_s$ for $s\in\Delta\setminus\{s_0\}$ such that the map
\[\Delta\ni s\mapsto x^j_s\in (b_{j-1}(\Lambda(s)),b_{j}(\Lambda(s)))\] is of class $C^\infty$ for  every $1\leq j\leq d$.
We deal with the family of corresponding loops $\gamma^j_s$ for  $s\in\Delta$ and $1\leq j\leq d$. By the continuity of the maps $s\mapsto\gamma_s^j$, we can find $\vep>0$ such that $(s_0-\vep,s_0+\vep)\subset\Delta$ and for every $s\in (s_0-\vep,s_0+\vep)$ the loops $\gamma^j_s$, $1\leq j\leq d$ do not meet the corners of $\mathcal P(s)$. It follows that each map $n^j_{(\alpha,\beta,C)}$ is constant on $(s_0-\vep,s_0+\vep)$ and the range of the second sum in \eqref{eq:omxi} is $D$.
Therefore for every $1\leq j\leq d$ there exists a sequence $(n^j_h)_{h\in \mathcal D}$ numbers in $\Z_{\geq 0}$ such that
\begin{align}\label{eq:btsin}
b_{j}(\Lambda(s))-t_{\pi (j)}(\Lambda(s))=\rp\langle \omega(s),\xi_j(s)\rangle =\sum_{h\in \mathcal D}n^j_h\rp h(s)
\end{align}
for all $s\in (s_0-\vep,s_0+\vep)$. It follows that
\begin{equation}\label{eq:btbarcket}
[b_{j}(\Lambda(s))-t_{\pi (j)}(\Lambda(s)),\ell(s)]=\sum_{h\in \mathcal D}n^j_h[\rp h(s),\ell(s)]
\end{equation}
for all $s\in (s_0-\vep,s_0+\vep)$ and $1\leq j\leq d$.


Now assume that the condition $(ii_+)$ holds.
In view of $(ii_+)$, $[\rp h(s),\ell(s)]\geq 0$ for all  $h\in \mathcal D$ and $s\in (s_0-\vep,s_0+\vep)$. Therefore, by \eqref{eq:btbarcket}, $[b_{j}(\Lambda(s))-t_{\pi (j)}(\Lambda(s)),\ell(s)]\geq 0$ for all $s\in(s_0-\vep,s_0+\vep)$ and $1\leq j\leq d$. As $s_0$ is an arbitrary element of $\Delta$,
it follows that  $[b_{j}(\Lambda(s))-t_{\pi (j)}(\Lambda(s)),\ell(s)]\geq 0$ for every $s\in\Delta$. It gives \eqref{eq:fin} under the assumption $(ii_+)$.
%

To complete the proof in this case we need to show \eqref{eq:fin1}.
Suppose, contrary to our claim, that the subset $J_1\subset J$ of all  $s\in J$ such that
\begin{equation*}
[b_{j}(\Lambda(s))-t_{\pi (j)}(\Lambda(s)),\ell(s)]= 0\  \text{ for every }\ 1\leq j\leq d
\end{equation*}
has positive Lebesgue measure.
By the assumption $(ii_+)$, there exist $s_0\in J_0\cap J_1$ and $h_0\in\mathcal D$ such that
\begin{align}\label{eq:h_0}
[\rp h_0(s_0),\ell(s_0)]>0\ \text{ and }\
[\rp h(s_0),\ell(s_0)]\geq 0\ \text{for all}\ h\in \mathcal D.
\end{align}
Let $(\alpha_0,\beta_0,C_0)\in D$ be a triple such that $h_0(s)=v_{\alpha_0,\beta_0}^{C_0}(s)$ for all $s\in J$. For every $s\in J$ choose a common side $e^0_s$  (without the ends) of $P_{\alpha_0}(s)$ and $P_{\beta_0}(s)$ contained in $C_0$ so that the map $s\mapsto e^0_s$ is smooth.

Since the flow $(\varphi^v_t)_{t\in\R}$ on $(M,\omega(s_0))$ is minimal, there exists $1\leq j\leq d$ and $x^j_{s_0}\in (b_{j-1}(\Lambda(s_0)),b_{j}(\Lambda(s_0)))$ such that the corresponding loop $\gamma^j_{s_0}$ has an intersection with $e^0_{s_0}$ and
does not meet any corner of $\mathcal P(s_0)$. As in the first part of the proof, there exists $\vep>0$ and a smooth map $(s_0-\vep,s_0+\vep)\ni s\mapsto x^j_s\in I_s$ such that for every $s\in(s_0-\vep,s_0+\vep)$ the corresponding loop $\gamma^j_{s}$
does not meet any corner of $\mathcal P(s)$. It follows that the map $n^j_{(\alpha_0,\beta_0,C_0)}$ is constant on $(s_0-\vep,s_0+\vep)$ and takes a positive value.  Therefore
there exists a sequence $(n_h)_{h\in \mathcal D}$ of numbers in $\Z_{\geq 0}$ such that $n_{h_0}>0$ and
\begin{align*}
b_{j}(\Lambda(s))-t_{\pi (j)}(\Lambda(s))=\rp\langle \omega(s),\xi_j(s)\rangle =\sum_{h\in \mathcal D}n_h\rp h(s)
\end{align*}
for all $s\in (s_0-\vep,s_0+\vep)$.
In view of \eqref{eq:h_0}, it follows that
\begin{align*}
[b_{j}(\Lambda(s_0))-t_{\pi (j)}(\Lambda(s_0)),\ell(s_0)]&=\sum_{h\in \mathcal D}n_h[\rp h(s_0),\ell(s_0)]\\
&\geq n_{h_0}[\rp h_0(s_0),\ell(s_0)]>0.
\end{align*}
This contradicts the fact that $s_0\in J_1$ and finishes the proof of \eqref{eq:fin1}. This completes the proof  under the assumption $(ii_+)$.

\smallskip
Now assume that the condition $(ii_-)$ holds. Then we consider the reverse curve $-J\ni s\mapsto (M,\omega(-s))\in \mathcal M(M,\Sigma)$. Since the reverse curve satisfies
$(i)$ and $(ii_+)$, the assertion of the theorem follows from the previous part of the proof.
\end{proof}

\section{Billiards on tables with vertical and horizontal sides}\label{sec:poly}
In the next two sections we deal with  the billiard flow in directions $\pm\pi/4$, $\pm 3\pi/4$ on tables
with vertical and horizontal sides. More precisely, we consider smooth curves of such tables.
The aim of this part of the paper is to formulate and prove a criterion (Theorem~\ref{thm:mainsurf}) for unique ergodicity of the billiard
flow on almost every table in the curve. The proof of Theorem~\ref{thm:mainsurf}
relies on Theorem~\ref{thm:gencrit}.

\smallskip

Denote by $\Xi$ the set of  sequences $(\overline{x},\overline{y})=(x_i,y_i)_{i=1}^k$ of points in $\R^2_{>0}$ such that
\[0<x_1<x_2<\ldots<x_{k-1}<x_k\quad\text{and}\quad 0<y_k<y_{k-1}<\ldots<y_2<y_1.\]
\begin{figure}[h]
\includegraphics[width=0.6 \textwidth]{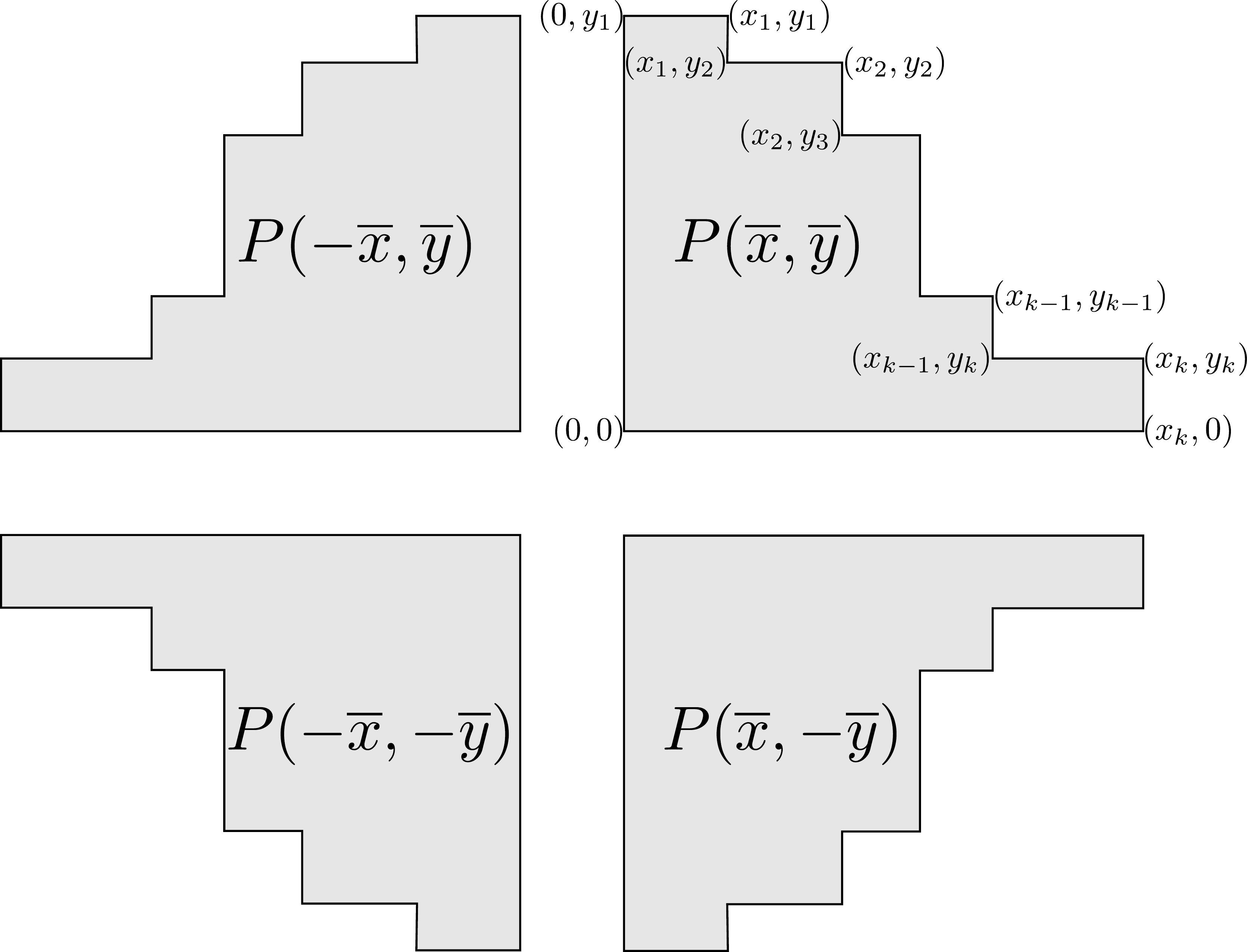}
\caption{Basic polygons $P(\overline{x},\overline{y})$, $P(-\overline{x},\overline{y})$, $P(\overline{x},-\overline{y})$, $P(-\overline{x},-\overline{y})$.}\label{fig:basicpolygon}
\end{figure}
Let $k(\overline{x},\overline{y}):=k$.
For every $(\overline{x},\overline{y})\in\Xi$
denote by $P(\overline{x},\overline{y})$ the right-angle { staircase} polygon on $\R^2$ (i.e.\ with angles $\pi/2$ or $3\pi/2$) with consecutive vertices:
\[(0,0),(0,y_1),(x_1,y_1),(x_1,y_2),\ldots,(x_{k-1},y_{k-1}),(x_{k-1},y_{k}),(x_k,y_k),(x_k,0),\]
see Figure~\ref{fig:basicpolygon}.

Denote by $\Gamma$ the four element group
generated by the vertical and the horizontal reflections $\gamma_v, \gamma_h:\R^2\to\R^2$. We extent the action of $\Gamma$ to the space of finite sequences of points in $\R^2$.
The polygons of the form
\begin{align*}
P(-\overline{x},\overline{y})&= P(\gamma_v(\overline{x},\overline{y})):=\gamma_v P(\overline{x},\overline{y}),\\
P(\overline{x},-\overline{y})&=P(\gamma_h (\overline{x},\overline{y})):=\gamma_h P(\overline{x},\overline{y}),\\
P(-\overline{x},-\overline{y})&= P(\gamma_v\circ\gamma_h(\overline{x},\overline{y})):=\gamma_v\circ\gamma_h P(\overline{x},\overline{y})
\end{align*}
are called \emph{basic polygons}, see Figure~\ref{fig:basicpolygon}.
We say that:
\begin{itemize}
\item $\gamma[(0,0),(0,y_1)]$ is the long vertical side;
\item $\gamma[(0,0),(x_k,0)]$ is the long horizontal side;
\item $\gamma[(x_k,0),(x_k,y_k)]$ is the short vertical side;
\item $\gamma[(0,y_1),(x_1,y_1)]$ is the short horizontal side
\end{itemize}
of the basic polygon $P(\gamma(\overline{x},\overline{y}))$ { for $\gamma\in\Gamma$}.

\smallskip

We deal with billiard flows on right angle connected generalized polygons which are the union of finitely many basic polygons $P(\gamma(\overline{x},\overline{y}))$ for $(\overline{x},\overline{y})\in\Xi$, $\gamma\in\Gamma$ so that some sides of basic polygons are glued by translations.

Denote by $\mathscr{P}$ the collection of such generalized polygons for which the sides of the basic polygons
can be glued only in the following four cases:
\begin{itemize}
\item[$(V)$] we can glue $P(\overline{x},\pm\overline{y})$ with $P(-\overline{x}',\pm\overline{y}')$ along the long vertical sides if their lengths are the same;
\item[$(H)$] we can glue $P(\pm\overline{x},\overline{y})$ with $P(\pm\overline{x}',-\overline{y}')$  along the long horizontal sides if their lengths are the same;
\item[$(v)$] we can glue $P(\overline{x},\pm\overline{y})$ with $P(-\overline{x}',\pm\overline{y}')$ along the short vertical sides if their lengths are the same;
\item[$(h)$] we can glue $P(\pm\overline{x},\overline{y})$ with $P(\pm\overline{x}',-\overline{y}')$ along the short horizontal sides if their lengths are the same.
\end{itemize}

Notice that a generalized polygon $\mathbf{P}\in\mathscr{P}$ is not necessary a polygon in $\R^2$, $\mathbf P$ should be treated rather as a translation surface with boundary.
Translation surface of this type, called parking garages, and the corresponding billiard flows were already studied in \cite{Co-We} in the context of Veech dichotomy.

\smallskip

Suppose that a generalized polygon $\mathbf{P}\in\mathscr{P}$ is formed by gluing $M\geq 1$ basic polygons $\{\gamma_m P_m(\overline{x}^m,\overline{y}^m):m\in\mathcal I\}$, { where} $\mathcal I$ is an $M$-element set of indices
of basic polygons { and elements $\{\gamma_{m}:m\in\mathcal I\}$ in $\Gamma$ describe their types}.  Then we write
\[\mathbf{P}=\biguplus_{m\in \mathcal{I}}\gamma_m P_m(\overline{x}^m,\overline{y}^m).\]
We label the $m$-th basic polygon $P(\gamma_m(\overline{x}^m,\overline{y}^m))$ by the additional subscript $m$ because
many copies of the polygon $P(\gamma_m(\overline{x}^m,\overline{y}^m))$ can be used  to create the generalized polygon $\mathbf P$. The additional subscript helps us to distinguish them.

\begin{figure}[h]
\includegraphics[width=0.7 \textwidth]{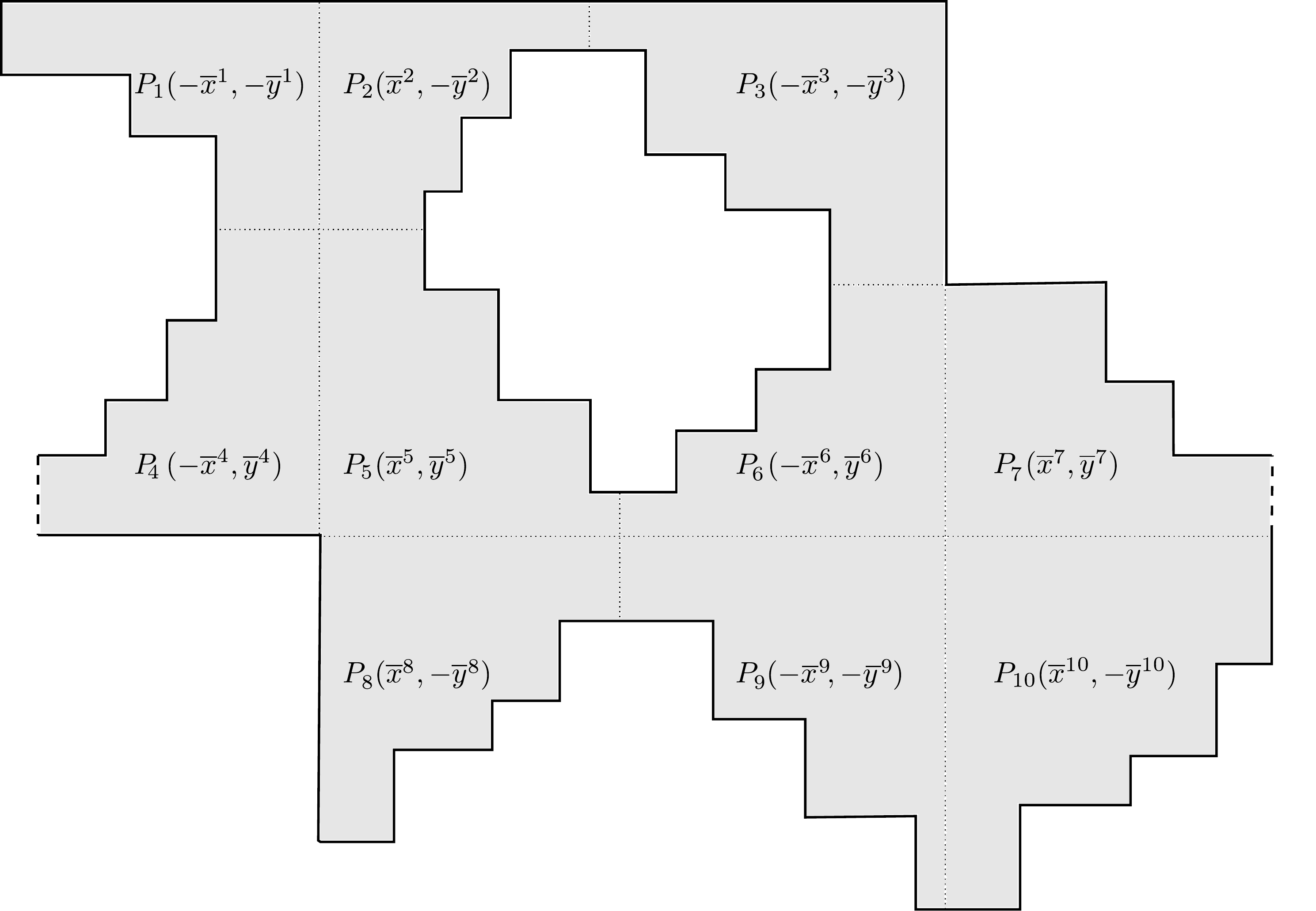}
\caption{An example of a generalized polygon $\mathbf{P}\in\mathscr{P}$ with
$1\sim_V 2$, $4\sim_V 5$, $6\sim_V 7$, $9\sim_V 10$,
$5\sim_H 8$, $6\sim_H 9$, $7\sim_H 10$,
$2\sim_v 3$, $5\sim_v 6$,$7\sim_v 4$, $8\sim_v 9$,
$1\sim_h 4$, $2\sim_h 5$, $3\sim_h 6$.}
\label{fig:polygonP}
\end{figure}

In order to describe the generalized polygon $\mathbf P$ fully we define four symmetric relations $\sim_V$, $\sim_H$, $\sim_v$, $\sim_h$ on $\mathcal I$ which reflect the gluing rules of
basic polygons in $\mathbf{P}$. We let $m\sim_a m'$ if
the polygons  $\gamma_m P_{m}(\overline{x}^m,\overline{y}^m)$ and $\gamma_{m'} P_{m'}(\overline{x}^{m'},\overline{y}^{m'})$
are glued in accordance with the scenario $(a)$, for $a=V,H,v,h$. Moreover, if $m\in\mathcal{I}$ is not $\sim_a$-related to any other element of $\mathcal{I}$
then we adopt the convention that $m\sim_a m$.

For every $m\in\mathcal I$ let $k_m:=k(\overline{x}^m,\overline{y}^m)$. We refer to the collection
\[\big\{\mathcal I,\, (\gamma_m)_{m\in\mathcal I},\, (k_m)_{m\in\mathcal I},\ \sim_V,\ \sim_H,\ \sim_v,\ \sim_h\big\}\]
as the \emph{combinatorial data} of the generalized polygon $\mathbf P\in\mathscr P$.


\begin{remark}\label{rem:gluing_rules}
Suppose that $\mathbf{P}\in\mathscr{P}$ and  consider the directional billiard flow on $\mathbf{P}$ in  directions $\Gamma(\pi/4)=\{\pm\pi/4,\pm 3\pi/4\}$. After performing the unfolding procedure emulating the standard procedure (for rational polygons on $\R^2$) coming from \cite{Fox-Ker} and \cite{Ka-Ze}
{ (roughly presented at the end of Section~\ref{sec:change})} we obtain a translation surface $M(\mathbf{P})\in\mathcal M$.
 Then the billiard flow is isomorphic to the directional flow $(\varphi^{\pi/4}_t)_{t\in\R}$ on $M(\mathbf{P})$.
{ Recall that $M(\mathbf{P})$ is glued from four transformed copies of $\mathbf{P}$, i.e.\ $\mathbf{P}$, $\gamma_v\mathbf{P}$, $\gamma_h\mathbf{P}$ and $\gamma_v\circ\gamma_h\mathbf{P}$.}
Therefore, the translation surface $M(\mathbf{P})$ has a natural partition into basic polygons
\begin{equation}\label{eq:MP}
\mathcal{P}_{\mathbf{P}}=\{P_m(\gamma(\overline{x}^m,\overline{y}^m)):(m,\gamma)\in \mathcal{I}\times\Gamma\}.
\end{equation}
\textbf{ Gluing rules.} The sides of the basic polygons in $M(\mathbf{P})$ are identified by translations in the following way:
\begin{itemize}
\item[$(i)$] for $1\leq i< k_m$ the vertical side $[(x^m_i,\pm y^m_{i}),(x^m_i,\pm y^m_{i+1})]$ of $P_m(\overline{x}^m,\pm\overline{y}^m)$ is identified with the side $[(-x^m_i,\pm y^m_{i}),(-x^m_i,\pm y^m_{i+1})]$ of $P_m(-\overline{x}^m,\pm\overline{y}^m)$;
\item[$(ii)$] for $1< i\leq k_m$ the horizontal side  $[(\pm x^m_{i-1}, y^m_{i}),(\pm x^m_{i}, y^m_{i})]$ of $P_m(\pm\overline{x}^m,\overline{y}^m)$ is identified with the side  $[(\pm x^m_{i-1}, -y^m_{i}),(\pm x^m_i, -y^m_{i})]$ of $P_m(\pm\overline{x}^m,-\overline{y}^m)$;
\item[$(iii)$] if $m\sim_V m'$ then the long vertical side
 of $P_m(\overline{x}^{m},\pm\overline{y}^m)$ is identified with the long vertical side  of $P_{m'}(-\overline{x}^{m'},\pm\overline{y}^{m'})$;
\item[$(iv)$] if $m\sim_H m'$ then the long horizontal side
 of $P_m(\pm\overline{x}^m,\overline{y}^m)$ is identified with the long horizontal side of $P_{m'}(\pm\overline{x}^{m'},-\overline{y}^{m'})$;
\item[$(v)$] if $m\sim_v m'$ then the short vertical side
 of $P_m(\overline{x}^m,\pm\overline{y}^m)$ is identified with the short vertical side of $P_{m'}(-\overline{x}^{m'},\pm\overline{y}^{m'})$;
\item[$(vi)$] if $m\sim_h m'$ then the short horizontal side of $P_m(\pm\overline{x}^m,\overline{y}^m)$ is identified with the short horizontal side of $P_{m'}(\pm\overline{x}^{m'},-\overline{y}^{m'})$.
\end{itemize}
\end{remark}

Let $\mathcal A:=\mathcal I\times\Gamma$ and $P_\alpha:=P_m(\gamma(\overline{x}^m,\overline{y}^m))$ if $\alpha=(m,\gamma)\in\mathcal A$.

\begin{remark}\label{rem:corners}
Denote by $\Sigma\subset M(\mathbf P)$ the set of singular points. Singular points arise from some corners of $P_\alpha$, $\alpha\in\mathcal A$.
More precisely,  for every $\alpha=(m,\gamma)\in\mathcal{A}$
\begin{itemize}
 \item[$(i)$] the corner $V^\alpha_{i,i+1}:=\gamma(x^m_i,y^m_{i+1})\in P_\alpha$ for $1\leq i< k_m$  is a singular point with the total angle $6\pi$;
 \item[$(ii)$] the corner $V^\alpha_{i,i}:=\gamma(x^m_i,y^m_{i})\in P_\alpha$ for $1\leq i\leq k_m$ is a regular point.
\end{itemize}
 The other corners $\gamma(0,0)$, $\gamma(x^m_{k_m},0)$, $\gamma(0,y^m_1)$ in $P_\alpha$ can be singular or regular points. More precisely,
\begin{itemize}
 \item[$(iii)$] if $m\sim_V m'\sim_H m''\sim_V m'''\sim_H m$ for some $m', m'',m'''\in\mathcal I$ then $V^\alpha_{0,0}:=\gamma(0,0)\in P_\alpha$
 is a regular point, otherwise it is singular;
 \item[$(iv)$] if  $m\sim_V m'\sim_h m''\sim_V m'''\sim_h m$  for some $m', m'',m'''\in\mathcal I$ then $V^\alpha_{k_m,0}:=\gamma(x^m_{k_m},0)\in P_\alpha$
  is a regular point, otherwise it is singular;
 \item[$(v)$] if $m\sim_v m'\sim_H m''\sim_v m'''\sim_H m$   for some $m', m'',m'''\in\mathcal I$ then $V^\alpha_{0,1}:=\gamma(0,y^m_1)\in P_\alpha$
 is a regular point, otherwise it is singular.
\end{itemize}
\end{remark}

For every $\alpha\in\mathcal A$ there is an open set $U_\alpha\subset M(\mathbf P)\setminus \Sigma$ such that $P_\alpha\setminus \Sigma\subset U_\alpha$ and a chart $\zeta_\alpha:U_\alpha\to \C$ of the translation atlas
of $M(\mathbf P)$
such that its continuous extension $\bar\zeta_\alpha$ is equal to the identity on $P_\alpha$.

Suppose that $e$ is a common side (without ends) of two  polygons $P_\alpha$ and $P_\beta$. As $e\subset U_\alpha\cap U_\beta$, there exists a connected component $C_{\alpha,\beta}^e$ of $U_\alpha\cap U_\beta$ containing $e$.


The following result describes all triples $(\alpha,\beta,C)$ in $D_{\pi/4}=D_{\pi/4}(M(\mathbf P), \mathcal P_{\mathbf P})$ (see Remark~\ref{DBEtheta}) and the corresponding translation vectors $v_{\alpha,\beta}^C$.

\begin{lemma}\label{lem:setD}
Every triple $(\alpha,\beta,C)\in D_{\pi/4}$ is of the form $(\alpha,\beta,C_{\alpha,\beta}^e)$, where  $e$ a common side of polygons $P_\alpha$ and $P_\beta$.
If $C=C_{\alpha,\beta}^e$ and $e$ is:
\begin{itemize}
\item[$(i_h)$] the horizontal side joining $V_{j-1,j}^\alpha$ and $V_{j,j}^\alpha$ for $1< j\leq k_m$ with $\alpha=(m,\gamma)$, then $\beta=(m,\gamma_h\circ\gamma)$ and $v_{\alpha,\beta}^{C}=2y_j^m i$;
\item[$(ii_h)$] the horizontal side joining $V_{0,1}^\alpha$ and $V_{1,1}^\alpha$ (the short horizontal side of $P_\alpha$), $\alpha=(m,\gamma)$ and $m\sim_h m'$, then $\beta=(m',\gamma_h\circ\gamma)$ and $v_{\alpha,\beta}^{C}=(y_1^m+y_1^{m'})i$;
\item[$(iii_h)$] the long horizontal side $P_\alpha$, $\alpha=(m,\gamma)$ and $m\sim_H m'$, then $\beta=(m',\gamma_h\circ\gamma)$ and $v_{\alpha,\beta}^{C}=0$;
\item[$(i_v)$] the vertical side joining $V_{j,j}^\alpha$ and $V_{j,j+1}^\alpha$ for $1\leq j< k_m$ with $\alpha=(m,\gamma)$, then $\beta=(m,\gamma_v\circ\gamma)$ and $v_{\alpha,\beta}^{C}=2x_j^m$;
\item[$(ii_v)$] the vertical side joining $V_{k_m,k_m}^\alpha$ and $V_{k_m,0}^\alpha$ (the short vertical side of $P_\alpha$), $\alpha=(m,\gamma)$ and $m\sim_v m'$, then $\beta=(m',\gamma_v\circ\gamma)$ and $v_{\alpha,\beta}^{C}=x_{k_m}^m+x_{k_{m'}}^{m'}$;
\item[$(iii_v)$] the long vertical side $P_\alpha$, $\alpha=(m,\gamma)$ and $m\sim_V m'$, then $\beta=(m',\gamma_V\circ\gamma)$ and $v_{\alpha,\beta}^{C}=0$.
\end{itemize}
\end{lemma}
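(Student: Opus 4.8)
The plan is to turn the statement into a finite case analysis over the six types of side of a basic staircase polygon, reading off the partner polygon $\beta$ and the vector $v_{\alpha,\beta}^C$ from the gluing rules of Remark~\ref{rem:gluing_rules} together with the displacement formula \eqref{eq:<>}. The first assertion is immediate from the definition of $D$ in Definition~\ref{def:DBE}: for a triple in $D_{\pi/4}$ the crossing point $x$ lies in the interior of a common side $e$ of $P_\alpha$ and $P_\beta$, and since $C$ is the connected component of $U_\alpha\cap U_\beta$ carrying the orbit segment through $x$, necessarily $C=C^e_{\alpha,\beta}$. I would also record the orientation convention forced by Definition~\ref{def:DBE}: the orbit passes from $P_\beta$ (the past) to $P_\alpha$ (the future) and moves up and to the right, so $P_\alpha$ lies \emph{above} $e$ when $e$ is horizontal and to the \emph{right} of $e$ when $e$ is vertical. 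As every side of $P_\alpha$ is horizontal or vertical and the direction $\pi/4$ is transverse to both, this sign convention is unambiguous.

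Next I would enumerate the candidate sides $e$ of $P_\alpha=P_m(\gamma(\overline{x}^m,\overline{y}^m))$ using the corner labels of Remark~\ref{rem:corners}. The horizontal sides are the internal treads $[V^\alpha_{j-1,j},V^\alpha_{j,j}]$ for $1<j\le k_m$, the short horizontal side $[V^\alpha_{0,1},V^\alpha_{1,1}]$, and the long horizontal side $[V^\alpha_{0,0},V^\alpha_{k_m,0}]$; the vertical sides are the risers $[V^\alpha_{j,j},V^\alpha_{j,j+1}]$ for $1\le j<k_m$, the short vertical side $[V^\alpha_{k_m,k_m},V^\alpha_{k_m,0}]$, and the long vertical side $[V^\alpha_{0,0},V^\alpha_{0,1}]$. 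These are exactly the six cases of the lemma. In each case the orientation convention singles out which reflection $\gamma$ may occur (for instance an internal tread is a lower side of $P_\alpha$ precisely when $\gamma\in\{\gamma_h,\gamma_v\circ\gamma_h\}$, whereas the long horizontal side is a lower side precisely when $\gamma\in\{\mathrm{id},\gamma_v\}$), and the matching gluing rule then names $\beta$.

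I would then compute $v_{\alpha,\beta}^C=\bar{\zeta}_\beta(x)-\bar{\zeta}_\alpha(x)$ for a point $x$ of $e$, using that $\bar{\zeta}_\alpha$ and $\bar{\zeta}_\beta$ are the identity on $P_\alpha$ and $P_\beta$. For an internal tread, rule $(ii)$ identifies $e$ with the corresponding tread of $P_m(\gamma_h\circ\gamma(\overline{x}^m,\overline{y}^m))$, so $\beta=(m,\gamma_h\circ\gamma)$; the point $x$ has second coordinate $-y^m_j$ in the $\alpha$-chart and $y^m_j$ in the $\beta$-chart, whence $v_{\alpha,\beta}^C=2y^m_j i$. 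The short and long horizontal sides are handled identically with rules $(vi)$ and $(iv)$ and the matchings $m\sim_h m'$, $m\sim_H m'$, producing $\beta=(m',\gamma_h\circ\gamma)$ and the values $(y^m_1+y^{m'}_1)i$ and $0$. The three vertical cases are the mirror image, using rules $(i)$, $(v)$, $(iii)$ and the $\gamma_v$-reflection together with $m\sim_v m'$, $m\sim_V m'$: the risers give $\beta=(m,\gamma_v\circ\gamma)$ and $v_{\alpha,\beta}^C=2x^m_j$, while the short and long vertical sides give $\beta=(m',\gamma_v\circ\gamma)$ and the values $x^m_{k_m}+x^{m'}_{k_{m'}}$ and $0$.

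The step I expect to demand the most care is the orientation bookkeeping underlying each sign. Because $\gamma_h$ and $\gamma_v$ interchange ``upper'' with ``lower'' and ``left'' with ``right'', one must verify in every case that the polygon carrying the named side $e$ is genuinely the one the orbit enters, that is, that it lies above (respectively to the right of) $e$; this is exactly what decides whether $\beta$ is obtained from $\alpha$ by $\gamma_h$ or by $\gamma_v$ and what fixes the sign of $v_{\alpha,\beta}^C$. Once the correct reflection is attached to $\alpha$, the displacement $\bar{\zeta}_\beta(x)-\bar{\zeta}_\alpha(x)$ yields the stated vectors by a direct comparison of coordinates in the two charts.
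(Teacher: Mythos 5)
Your proposal is correct and follows essentially the same route as the paper's proof: a case analysis over the horizontal and vertical side types of the basic polygons, reading off $\beta$ from the gluing rules of Remark~\ref{rem:gluing_rules} and computing $v_{\alpha,\beta}^{C}=\bar{\zeta}_\beta(x)-\bar{\zeta}_\alpha(x)$ by comparing coordinates in the two charts (the paper likewise treats only the horizontal cases explicitly and declares the vertical ones analogous). If anything, your explicit orientation bookkeeping -- that $P_\alpha$ must lie above (resp.\ to the right of) $e$ because the $\pi/4$-orbit passes from $P_\beta$ to $P_\alpha$ -- makes the sign determinations cleaner than in the paper, which uses this convention only implicitly.
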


\begin{proof}
{
By definition, a triple $(\alpha,\beta,C)$ belongs to $D_{\pi/4}=D_{\pi/4}(M(\mathbf P), \mathcal P_{\mathbf P})$
if there is an orbit segment of the flow $(\varphi^{\pi/4}_t)_{t\in\R}$ passing from  $P_\beta$ to $P_\alpha$
across their common   side $e$ and $e\subset C$. Then $v_{\alpha,\beta}^C$ computes the displacement between local
coordinates in both polygons. Since all sides of the partition $\mathcal P_{\mathbf P}$ are only horizontal and vertical,
we will deal only with horizontal. In the vertical case, the reasoning is analogous.

There are three types of horizontal sides: long horizontal sides, short horizontal sides and sides joining
$V_{j-1,j}^\alpha$ and $V_{j,j}^\alpha$ for $1< j\leq k_m$.

If $e$ is a long horizontal side, then all  points in $e$ have the same local coordinates in both polygons.
It follows that $v_{\alpha,\beta}^C=0$, which confirms $(iii_h)$.

Suppose that $e$  is a side joining  $V_{j-1,j}^\alpha$ and $V_{j,j}^\alpha$ for $1< j\leq k_m$
and an orbit segment of the flow $(\varphi^{\pi/4}_t)_{t\in\R}$ passes from  $P_\beta$ to $P_\alpha$
across $e$. By the gluing rule $(ii)$, we have $P_\beta= P_m(\pm\overline{x}^m,\overline{y}^m)$ and $P_\alpha=P_m(\pm\overline{x}^m,-\overline{y}^m)$.
Moreover, the local coordinate in $P_\beta$ of any point $x \in e$ is of the form $t+iy_j^m$, whereas its local coordinate in $P_\alpha$ is $t-iy_j^m$.
It follows that $v_{\alpha,\beta}^C=\xi_\beta(x)-\xi_\alpha(x)=2y_j^mi$, which confirms $(i_h)$.

If $e$ is a short horizontal side, the arguments are similar. By the gluing rule $(v)$, we have $P_\beta= P_m(\pm\overline{x}^m,\overline{y}^m)$ and $P_\alpha=P_{m'}(\pm\overline{x}^{m'},-\overline{y}^{m'})$ with $m\sim_v m'$. Then for every $x\in e$ we have
\[v_{\alpha,\beta}^C=\xi_\beta(x)-\xi_\alpha(x)=t+iy_j^m-(t-iy_j^{m'})=(y_1^m+y_1^{m'})i,\]
which confirms $(ii_h)$.
}
\end{proof}

The following result describes all pairs $(\alpha,\sigma)$ in $B_{\pi/4}=B_{\pi/4}(M(\mathbf P), \mathcal P_{\mathbf P})$ and $(\beta,\sigma)$ in $E_{\pi/4}=E_{\pi/4}(M(\mathbf P), \mathcal P_{\mathbf P})$ (see Remark~\ref{DBEtheta}) and the corresponding vectors $-\bar\zeta_\alpha(\sigma)$ and $\bar\zeta_\beta(\sigma)$ respectively.

\begin{lemma}\label{lem:setsBE}
Suppose that $(\alpha,\sigma)\in B_{\pi/4}$.
If the singular point $\sigma\in \Sigma$ is of the form:
\begin{itemize}
\item[$(i_B)$] $V_{j,j+1}^\alpha$ for $1\leq j< k_m$ with $\alpha=(m,\gamma)$, then $\gamma=\gamma_h$ or $\gamma_v$ or $\gamma_v\circ\gamma_h$ and $-\bar\zeta_\alpha(\sigma)=-x^m_j+y^m_{j+1}i$ or $x^m_j-y^m_{j+1}i$ or $x^m_j+y^m_{j+1}i$ respectively;
\item[$(ii_B)$] $V_{0,1}^\alpha$ with $\alpha=(m,\gamma)$, then $\gamma=\gamma_h$ and $-\bar\zeta_\alpha(\sigma)=y^m_{1}i$;
\item[$(iii_B)$] $V_{k_m,0}^\alpha$ with $\alpha=(m,\gamma)$, then $\gamma=\gamma_v$ and $-\bar\zeta_\alpha(\sigma)=x^m_{k_m}$;
\item[$(iv_B)$] $V_{0,0}^\alpha$ with $\alpha=(m,\gamma)$, then $\gamma=id$ and $-\bar\zeta_\alpha(\sigma)=0$.
\end{itemize}
Suppose that $(\beta,\sigma)\in E_{\pi/4}$.
If the singular point $\sigma\in \Sigma$ is of the form:
\begin{itemize}
\item[$(i_E)$] $V_{j,j+1}^\beta$ for $1\leq j< k_m$ with $\beta=(m,\gamma)$, then $\gamma=id$ or $\gamma_h$ or $\gamma_v$  and $\bar{\zeta}_\beta(\sigma)=x^m_j+y^m_{j+1}i$ or $x^m_j-y^m_{j+1}i$ or $-x^m_j+y^m_{j+1}i$ respectively;
\item[$(ii_E)$] $V_{0,1}^\beta$ with $\beta=(m,\gamma)$, then $\gamma=\gamma_v$ and $\bar\zeta_\beta(\sigma)=y^m_{1}i$;
\item[$(iii_E)$] $V_{k_m,0}^\beta$ with $\beta=(m,\gamma)$, then $\gamma=\gamma_h$ and $\bar\zeta_\beta(\sigma)=x^m_{k_m}$;
\item[$(iv_E)$] $V_{0,0}^\beta$ with $\beta=(m,\gamma)$, then $\gamma=\gamma_v\circ\gamma_h$ and $\bar\zeta_\beta(\sigma)=0$.
\end{itemize}
\end{lemma}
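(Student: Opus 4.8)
The plan is to turn the statement into a finite, purely local computation at each singular corner, using that the charts satisfy $\bar\zeta_\alpha=\mathrm{id}$ on $P_\alpha=\gamma P_m(\overline{x}^m,\overline{y}^m)$. By Definition~\ref{def:DBE} together with Remark~\ref{DBEtheta}, a pair $(\alpha,\sigma)$ lies in $B_{\pi/4}$ exactly when $\sigma\in\Sigma\cap P_\alpha$ and some short arc $c\colon[0,\vep]\to P_\alpha$ satisfies $\bar\zeta_\alpha(c(t))=\bar\zeta_\alpha(\sigma)+e^{i\pi/4}t$; dually $(\beta,\sigma)\in E_{\pi/4}$ when such an arc exists for $t\in[-\vep,0]$. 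Since $\bar\zeta_\alpha$ is the identity, this is the Euclidean condition that the ray from $\sigma$ in direction $e^{i\pi/4}$ (for $B$), respectively in direction $-e^{i\pi/4}=e^{-i3\pi/4}$ (for $E$), enters the interior of the angular wedge subtended by $P_\alpha$ at its corner $\sigma$. Moreover $\bar\zeta_\alpha(\sigma)$ is nothing but $\gamma$ applied to the relevant base vertex, so once the admissible $\gamma$'s are found the coordinate formulas fall out by inspection.

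First I would record the base wedges. In $P_m(\mathrm{id})$ the reflex corner $V_{j,j+1}=(x_j^m,y_{j+1}^m)$ subtends the $3\pi/2$-wedge equal to the complement of the open first quadrant, while the convex corners $V_{0,0}=(0,0)$, $V_{k_m,0}=(x_{k_m}^m,0)$ and $V_{0,1}=(0,y_1^m)$ subtend the first, second and fourth quadrants respectively. For a general copy $\alpha=(m,\gamma)$ the wedge is the image of the corresponding quadrant under $\gamma$, computed from the action $\gamma_h\colon\theta\mapsto-\theta$, $\gamma_v\colon\theta\mapsto\pi-\theta$, $\gamma_v\gamma_h\colon\theta\mapsto\theta+\pi$ on directions.

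The core is then a case check over the four corner types and the four elements of $\Gamma$. The decisive point is that the diagonal directions $e^{i\pi/4}$ and $e^{-i3\pi/4}$ lie strictly inside a single quadrant ($Q_1$, resp.\ $Q_3$) and never along a horizontal or vertical edge; hence for each copy the membership test has an unambiguous yes/no answer and any admissible separatrix lies strictly inside one copy. For the reflex corner this excludes exactly $\gamma=\mathrm{id}$ from $B$ (the missing quadrant is then $Q_1$) and exactly $\gamma=\gamma_v\gamma_h$ from $E$ (the missing quadrant is then $Q_3$), giving $(i_B)$ and $(i_E)$. For each convex corner the test selects the unique $\gamma$ carrying its base quadrant onto $Q_1$ (for $B$) or onto $Q_3$ (for $E$): $\gamma=\mathrm{id},\gamma_v,\gamma_h$ for $B$ at $V_{0,0},V_{k_m,0},V_{0,1}$ and $\gamma=\gamma_v\gamma_h,\gamma_h,\gamma_v$ for $E$, which are cases $(ii_B)$--$(iv_B)$ and $(ii_E)$--$(iv_E)$. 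Evaluating $-\bar\zeta_\alpha(\sigma)=-\gamma(\text{vertex})$ and $\bar\zeta_\beta(\sigma)=\gamma(\text{vertex})$ in the surviving cases yields the listed vectors. In fact the $E$-part can be deduced from the $B$-part without recomputation: the central symmetry $\gamma_v\gamma_h$ sends the direction $e^{i\pi/4}$ to $e^{-i3\pi/4}$ and the wedge of $(m,\gamma)$ to that of $(m,\gamma_v\gamma_h\gamma)$, so $(m,\gamma)\in E_{\pi/4}\Leftrightarrow(m,\gamma_v\gamma_h\gamma)\in B_{\pi/4}$, and on vertices $\gamma_v\gamma_h$ acts by negation, matching $\bar\zeta_\beta(\sigma)$ against the corresponding $-\bar\zeta_\alpha(\sigma)$.

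The one subtlety, and the only place the global surface enters, is that $V_{0,0}$, $V_{k_m,0}$, $V_{0,1}$ are genuine cone points whose total angle may exceed $2\pi$; one must check that membership is governed by the local wedge of a single copy rather than by the whole cone. This is precisely where the strict-interiority observation is used: each copy $P_\alpha$ contributes one convex quadrant-wedge to the cone, the direction $e^{i\pi/4}$ meets the interior of at most one such wedge, so the outgoing (or incoming) $\pi/4$-separatrix is carried by a uniquely determined copy. A higher cone angle merely reproduces several copies with a given $\gamma$ (one per $2\pi$ of angle) and hence several separatrices of the same admissible form, which is exactly what the statement allows. I expect this locality justification to be the only non-mechanical step; the remainder is the reflection bookkeeping summarized above.
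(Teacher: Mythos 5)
Your proof is correct and follows essentially the same route as the paper: a local case-check at each corner type, determining which reflected copies $\gamma\in\Gamma$ admit an outgoing (resp.\ incoming) ray in direction $\pi/4$ and then reading off $\mp\bar\zeta_\alpha(\sigma)$ from the identity charts. Your treatment is in fact more complete than the paper's (which verifies only the reflex-corner case of $B_{\pi/4}$ and leaves the rest ``to the reader''), and the central-symmetry observation $(m,\gamma)\in E_{\pi/4}\Leftrightarrow(m,\gamma_v\circ\gamma_h\circ\gamma)\in B_{\pi/4}$ is a clean shortcut the paper does not use.
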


\begin{proof}
{
Since the descriptions of the sets $B_{\pi/4}$ and $E_{\pi/4}$ result from similar reasoning, we will focus only on $B_{\pi/4}$.

By definition,  $(\alpha,\sigma)\in\mathcal{A}\times\Sigma$ belongs to $B_{\pi/4}$
if there is an orbit segment of the flow $(\varphi^{\pi/4}_t)_{t\in\R}$ in $P_\alpha$ starting from the singular point $\sigma\in P_\alpha$.
By Remark~\ref{rem:corners}, there are four types of singularities in $M(\mathbf P)$: $V_{0,0}^\alpha$, $V_{0,1}^\alpha$, $V_{k_m,0}^\alpha$
or  $V_{j,j+1}^\alpha$ for $1\leq j< k_m$.

Suppose that  $\sigma = V_{j,j+1}^\alpha$ for some $1\leq j< k_m$, where $\alpha=(m,\gamma)$. Then $P_\alpha$ is of the form
$P_m(\overline{x}^m,-\overline{y}^m)$ or $P_m(-\overline{x}^m,-\overline{y}^m)$ or $P_m(-\overline{x}^m,-\overline{y}^m)$ or $P_m(\overline{x}^m,\overline{y}^m)$. However, the only corner in $P_m(\overline{x}^m,\overline{y}^m)$ from which an orbit segment in direction $\pi/4$ comes out is $(0,0)$,
so the case $P_\alpha=P_m(\overline{x}^m,\overline{y}^m)$ cannot occur.
In other cases, local coordinates $\bar\zeta_\alpha(\sigma)$ are of the form  $x^m_j-y^m_{j+1}i$ or $-x^m_j+y^m_{j+1}i$ or $-x^m_j-y^m_{j+1}i$ respectively, which confirms $(i_B)$.

Remaining types of singularities (i.e.\ $V_{0,0}^\alpha$, $V_{0,1}^\alpha$, $V_{k_m,0}^\alpha$) follow by the same arguments
and we leave it to the reader.
}\end{proof}

\section{Smooth curves of  billiard tables}
Suppose that $J\ni s\mapsto \mathbf{P}(s)\in\mathscr{P}$ ($J\subset\R$ is an open interval) is a $C^\infty$-curve of polygonal tables.
Assume that all generalized polygons $\mathbf{P}(s)$, $s\in J$ have the same combinatorial data
$\{\mathcal I, (\gamma_m)_{m\in\mathcal I}, (k_m)_{m\in\mathcal I}, \sim_H,\sim_V,\sim_h,\sim_v\}$.
Then for every $ m\in \mathcal I$ there exists a
 $C^\infty$-map
\[J\ni s\mapsto (\overline{x}^m(s),\overline{y}^m(s))\in\Xi\]
so that for every $s\in J$ we have
\[\mathbf{P}(s)= \biguplus_{m\in\mathcal I}\gamma_m P(\overline{x}^m(s),\overline{y}^m(s))\]
with the gluing rules of basic polygons given by the four binary relations $\sim_V$, $\sim_H$, $\sim_v$, $\sim_h$.

The smooth curve of polygons $s\mapsto \mathbf{P}(s)$ provides a $C^\infty$ curve $s\mapsto M(\mathbf{P}(s))$ in the moduli space of translation surfaces $\mathcal M$.
Then for every $s\in J$ the surface $M(\mathbf{P}(s))\in \mathcal M$ has a natural partition into basic polygons
\begin{equation*}
\big\{P_m(\gamma(\overline{x}^m(s),\overline{y}^m(s))):(m,\gamma)\in \mathcal I\times\Gamma\big\}
\end{equation*}
so that their sides are identified according to the rules described in Remark~\ref{rem:gluing_rules}.

Let us consider two finite subsets in $C^\infty(J,\R_{>0})$ given by
\[\mathscr{X}_{\mathbf P}:=\{ x^m_j:m\in\mathcal I, 1\leq j\leq k_m\},\quad
\mathscr{Y}_{\mathbf P}:=\{ y^m_j:m\in\mathcal I, 1\leq j\leq k_m\}.\]

For any sequence  $(g_k)_{k=1}^n$ of maps in $C^\infty(J,\R)$ denote by $|W|((g_k)_{k=1}^n)$ the absolute value of its Wronskian, i.e.\
\[|W|((g_k)_{k=1}^n)(s)=\Big|\det\Big[\frac{d^{j-1}}{ds^{j-1}}g_k(s)\Big]_{j,k=1,\ldots,n}\Big|.\]

The following lemma is a straightforward consequence of Lebesgue's density theorem.
\begin{lemma}\label{lem:wron}
Suppose that $(g_k)_{k=1}^n$ is a sequence of maps in $C^\infty(J,\R)$ such that
\[|W|((g_k)_{k=1}^n)(s)>0\quad \text{for a.e.}\quad s\in J.\]
Then for a.e.\ $s\in J$ and for every sequence $(m_k)_{k=1}^n$ of at least one non-zero integer numbers we have $\sum_{k=1}^nm_kg_k(s)\neq 0$.
\end{lemma}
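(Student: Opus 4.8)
The plan is to fix a single nonzero integer vector, show that the corresponding linear combination has a Lebesgue-null zero set, and then take a countable union over all such vectors. The key analytic device is the Lebesgue density theorem combined with the classical fact that a smooth function vanishes to high order at every accumulation point of its zero set; the Wronskian hypothesis then produces the contradiction.

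First I would reduce to one combination. The set of ``bad'' parameters is
\[
B:=\bigcup_{(m_k)\in\Z^n\setminus\{0\}}\Big\{s\in J:\sum_{k=1}^n m_k g_k(s)=0\Big\},
\]
which is a \emph{countable} union. Hence it suffices to prove that for each fixed $(m_k)\in\Z^n\setminus\{0\}$ the zero set $Z:=\{s\in J:G(s)=0\}$ of $G:=\sum_{k=1}^n m_k g_k\in C^\infty(J,\R)$ has measure zero; then $B$ is null and its complement is the required full-measure set on which no nontrivial integer combination vanishes.

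Suppose, for contradiction, that $Z$ has positive Lebesgue measure. By the Lebesgue density theorem almost every point of $Z$ is a density point of $Z$. Since the set $\{s\in J:|W|((g_k)_{k=1}^n)(s)=0\}$ is null while $Z$ is not, I can choose a point $s_0\in Z$ that is simultaneously a density point of $Z$ and satisfies $|W|((g_k)_{k=1}^n)(s_0)>0$; in particular $s_0$ is an accumulation point of $Z$. The crucial step is that $G(s_0)=G'(s_0)=\cdots=G^{(n-1)}(s_0)=0$. Indeed, $G(s_0)=0$ by continuity, and iterating Rolle's theorem between consecutive zeros of $G$ accumulating at $s_0$ yields zeros of $G'$ accumulating at $s_0$, whence $G'(s_0)=0$ by continuity; repeating this argument gives the vanishing of all derivatives up to order $n-1$ (equivalently, the Taylor--Lagrange remainder of $G$ at $s_0$ forces it inductively).

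Finally I read off the contradiction from the Wronskian. The $n$ equations $\sum_{k=1}^n m_k g_k^{(j-1)}(s_0)=0$ for $1\leq j\leq n$ say that the nonzero vector $(m_1,\dots,m_n)$ lies in the kernel of the matrix $\big[g_k^{(j-1)}(s_0)\big]_{j,k=1}^n$, whose determinant has absolute value $|W|((g_k)_{k=1}^n)(s_0)\neq 0$. This is impossible, so $Z$ is null and the proof is complete. The only genuinely nontrivial ingredient is the infinite-order vanishing of a smooth function at accumulation points of its zero set; everything else is bookkeeping. The main point to watch is arranging $s_0$ to lie in the full-measure set where the Wronskian is positive while remaining a density point of $Z$, which is possible precisely because a positive-measure set cannot be swallowed by a null set.
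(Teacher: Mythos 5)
Your proof is correct and follows exactly the route the paper intends: the paper gives no written proof beyond the remark that the lemma is ``a straightforward consequence of Lebesgue's density theorem,'' and your argument (countable union over integer vectors, a density point of a positive-measure zero set chosen where the Wronskian is nonzero, Rolle iteration to kill the first $n-1$ derivatives, and the resulting kernel contradiction) is precisely the natural fleshing-out of that remark.
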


Since the absolute value of Wronskian does not depend on the order of the sequence, we can also define  the absolute value of Wronskian for
finite subsets in $C^\infty(J,\R)$ letting
\[|W|\{g_k:1\leq k\leq n\}:=|W|((g_k)_{k=1}^n),\]
if $g_k$, $1\leq k\leq n$ are distinct maps.

\begin{theorem}\label{thm:mainsurf}
Let $\ell:J\to\R_{>0}$ be a $C^\infty$-map. Suppose that
\begin{itemize}
\item[$(i)$] $|W|(\mathscr{X}_{\mathbf P}\cup\mathscr{Y}_{\mathbf P}\cup\{\ell\})(s)> 0$ for a.e.\ $s\in J$, and
\item[$(ii_{+-})$] $[\mathbf{x},\ell](s)\geq 0$ and $[\mathbf{y},\ell](s)<0$ for all $\mathbf{x}\in\mathscr{X}_{\mathbf P}$, $\mathbf{y}\in\mathscr{Y}_{\mathbf P}$ and a.e.\ $s\in J$, or
\item[$(ii_{-+})$] $[\mathbf{x},\ell](s)\leq 0$ and $[\mathbf{y},\ell](s)>0$ for all $\mathbf{x}\in\mathscr{X}_{\mathbf P}$, $\mathbf{y}\in\mathscr{Y}_{\mathbf P}$ and a.e.\ $s\in J$.
\end{itemize}
 Then  for a.e.\ $s\in J$ the translation flow $(\varphi^{\pi/4}_t)_{t\in\R}$  on
$M(\mathbf{P}(s))$ is uniquely ergodic.
\end{theorem}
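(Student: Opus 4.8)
The plan is to deduce the statement from the recurrence criterion Theorem~\ref{thm:gencrit} applied to the $r_{\pi/4}$-rotated curve $s\mapsto r_{\pi/4}\omega(s)$, where $\omega(s)$ denotes the translation structure of $M(\mathbf P(s))$. By Remark~\ref{rem:rot} the flow $(\varphi^{\pi/4}_t)_{t\in\R}$ on $(M,\omega(s))$ is the vertical flow on $(M,r_{\pi/4}\omega(s))$, so by Proposition~\ref{prop:mas} it suffices to show that $r_{\pi/4}\omega(s)$ is recurrent for a.e.\ $s\in J$. First I would set up the data required by Theorem~\ref{thm:gencrit}: the common finite open cover $(U_\alpha)_{\alpha\in\mathcal A}$ with $\mathcal A=\mathcal I\times\Gamma$, the partition $\mathcal P_{\mathbf P}(s)$ into basic polygons, which varies $C^\infty$-smoothly because $s\mapsto(\overline x^m(s),\overline y^m(s))$ is $C^\infty$, and the identity charts $\bar\zeta_\alpha$. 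Since the sides of $\mathcal P_{\mathbf P}$ are only horizontal and vertical, after rotation by $\pi/4$ their directions become $\pm\pi/4,\pm3\pi/4$, so the vertical direction is absent; equivalently, by Remark~\ref{DBEtheta} I would work directly with the sets $D_{\pi/4},B_{\pi/4},E_{\pi/4}$ on $(M,\omega(s))$, which are the sets $D,B,E$ of the rotated surface, and which do not depend on $s$ as the combinatorial data is fixed.

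The next step is to make the families $\mathscr D,\mathscr B,\mathscr E$ of Theorem~\ref{thm:gencrit} explicit for the rotated curve. A quantity $w\in\C$ attached to $(M,\omega)$ becomes $e^{i\pi/4}w$ for $r_{\pi/4}\omega$, and $\rp(e^{i\pi/4}w)=\tfrac1{\sqrt2}(\rp w-\ip w)$. Reading off the translation vectors in Lemma~\ref{lem:setD} and the singular local coordinates in Lemma~\ref{lem:setsBE}, every such $w$ has real part a $\Z$-combination of $\mathscr X_{\mathbf P}$ and imaginary part a $\Z$-combination of $\mathscr Y_{\mathbf P}$: horizontal sides contribute the pure imaginaries $2y^m_ji$ and $(y^m_1+y^{m'}_1)i$, vertical sides contribute the pure reals $2x^m_j$ and $x^m_{k_m}+x^{m'}_{k_{m'}}$, long sides contribute $0$, and each singular coordinate has the form $\pm x\pm yi$. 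Consequently, writing an unrotated combination as $W=A+Bi$ with $A$ an integer $x$-combination and $B$ an integer $y$-combination, the rotated real part of $f+g+\sum_h n_hh$ equals $\tfrac1{\sqrt2}(A-B)$; in particular $\rp$ of a rotated class in $\mathscr D$ is $\sqrt2$ times an $\mathscr X_{\mathbf P}$-term (vertical sides) or $-\sqrt2$ times a $\mathscr Y_{\mathbf P}$-term (horizontal sides).

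With this description, condition~$(i)$ of Theorem~\ref{thm:gencrit} follows from hypothesis~$(i)$ via Lemma~\ref{lem:wron}. If the rotated map $e^{i\pi/4}W$ is non-zero, then $W\neq0$, so $A-B$ is a \emph{non-trivial} integer combination of the family $\mathscr X_{\mathbf P}\cup\mathscr Y_{\mathbf P}$ (the $x$-coefficients coming from $A$, the $y$-coefficients from $-B$, and these two sub-families being disjoint). Since $|W|(\mathscr X_{\mathbf P}\cup\mathscr Y_{\mathbf P}\cup\{\ell\})>0$ a.e.\ by $(i)$, Lemma~\ref{lem:wron} gives $\tfrac1{\sqrt2}(A-B)(s)\neq0$ for a.e.\ $s$, which is exactly condition~$(i)$ of Theorem~\ref{thm:gencrit}. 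For the second condition I would use the bracket values: a vertical-side class gives $[\rp h,\ell]=\sqrt2\,[\mathbf x,\ell]$ or $\tfrac1{\sqrt2}([\mathbf x,\ell]+[\mathbf x',\ell])$, a horizontal-side class gives $[\rp h,\ell]=-\sqrt2\,[\mathbf y,\ell]$ or $-\tfrac1{\sqrt2}([\mathbf y,\ell]+[\mathbf y',\ell])$, and long sides give $0$. Under $(ii_{+-})$ all these are $\geq0$, so $(ii_+)$ of Theorem~\ref{thm:gencrit} holds; moreover every basic polygon possesses a short horizontal side, which is always glued in $M(\mathbf P(s))$ and hence yields a class of type $(ii_h)$ whose bracket $-\tfrac1{\sqrt2}([\mathbf y,\ell]+[\mathbf y',\ell])$ is strictly positive, forcing $\sum_{h}[\rp h,\ell]>0$ a.e. Symmetrically $(ii_{-+})$ yields $(ii_-)$ of Theorem~\ref{thm:gencrit}. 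Theorem~\ref{thm:gencrit} then gives recurrence of $r_{\pi/4}\omega(s)$ for a.e.\ $s\in J$, whence unique ergodicity of $(\varphi^{\pi/4}_t)_{t\in\R}$.

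I expect the main obstacle to be the careful bookkeeping of the $\pi/4$-rotation together with the verification that a non-zero complex combination produces a non-trivial real integer combination of $\mathscr X_{\mathbf P}\cup\mathscr Y_{\mathbf P}$. This hinges on the clean separation supplied by Lemmas~\ref{lem:setD} and~\ref{lem:setsBE}, namely that real parts live in the $x$-coordinates and imaginary parts in the $y$-coordinates, and on confirming that at least one horizontal gluing always exists so that the strict inequality demanded by $(ii_\pm)$ of Theorem~\ref{thm:gencrit} is genuinely attained.
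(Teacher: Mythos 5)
Your proposal is correct and takes essentially the same approach as the paper's own proof: reduction to Theorem~\ref{thm:gencrit} via the $r_{\pi/4}$-rotation (Remark~\ref{rem:rot} and Proposition~\ref{prop:mas}), identification of the families $\mathscr D$, $\mathscr B$, $\mathscr E$ through Remark~\ref{DBEtheta} and Lemmas~\ref{lem:setD}--\ref{lem:setsBE}, Lemma~\ref{lem:wron} applied to hypothesis $(i)$ for condition $(i)$ of Theorem~\ref{thm:gencrit}, and the same bracket computation split into vertical-side and horizontal-side classes for $(ii_\pm)$. Your explicit observation that a glued short horizontal side always exists, and hence furnishes a class of type $(ii_h)$ giving the strict inequality, is in fact a slightly more detailed justification of the step the paper covers by simply citing Lemma~\ref{lem:setD}.
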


\begin{proof}
Since the translation flow $(\varphi^{\pi/4}_t)_{t\in\R}$  on
$M(\mathbf{P}(s))$ coincide  with the vertical flow on $r_{\pi/4}M(\mathbf{P}(s))$ { (see Remark~\ref{rem:rot})} and the vertical direction does not belong to $\operatorname{dir} r_{\pi/4}\mathcal P_{\mathbf P(s)}$ for all $s\in J$, we can apply Theorem~\ref{thm:gencrit}
{(together with Proposition~\ref{prop:mas})} to the $C^\infty$-map $J\ni s\mapsto r_{\pi/4}M(\mathbf P(s))\in \mathcal{M}$ and to the smooth family of polygonal partitions $r_{\pi/4}\mathcal P_{\mathbf P(s)}$ of the translation surface $r_{\pi/4}M(\mathbf P(s))$ for $s\in J$. We need to verify the conditions $(i)$ and $(ii_\pm)$ from Theorem~\ref{thm:gencrit}.

Using Remark~\ref{DBEtheta} and Lemmas~\ref{lem:setD}-\ref{lem:setsBE}, we can easily localize the corresponding finite subset $\mathscr D$, $\mathscr B$, $\mathscr E$ in $C^{\infty}(J,\C)$. More precisely,
\begin{align}
\label{eq:D}
\mathscr D&\subset e^{i\pi/4}\big(\{0\}\cup(\mathscr{X}_{\mathbf P}+\mathscr{X}_{\mathbf P})\cup i(\mathscr{Y}_{\mathbf P}+\mathscr{Y}_{\mathbf P})\big)\\
\label{eq:BE}
\mathscr B, \mathscr E&\subset e^{i\pi/4}\big((\mathscr{X}_{\mathbf P}-i\mathscr{Y}_{\mathbf P})\cup(-\mathscr{X}_{\mathbf P}+i\mathscr{Y}_{\mathbf P})\cup\big((\{0\}\cup\mathscr{X}_{\mathbf P})+i(\{0\}\cup\mathscr{Y}_{\mathbf P})\big)\big).
\end{align}

Suppose that $f\in \mathscr B$, $g\in \mathscr E$ and $(n_h)_{h\in \mathscr D}$ is a sequence of numbers in $\Z_{\geq 0}$ such that the map $f+g+\sum_{h\in \mathscr D}n_hh$ is non-zero.
By \eqref{eq:D} and \eqref{eq:BE}, there exist integer numbers $a_{\mathbf x}$ for $\mathbf x\in \mathscr{X}_{\mathbf P}$ and $b_{\mathbf y}$ for $\mathbf y\in \mathscr{Y}_{\mathbf P}$ such that
\[f+g+\sum_{h\in \mathscr D}n_hh=e^{i\pi/4}\Big(\sum_{\mathbf x\in \mathscr{X}_{\mathbf P}}a_{\mathbf x}\mathbf x+i\sum_{\mathbf y\in \mathscr{Y}_{\mathbf P}}b_{\mathbf y}\mathbf y\Big).\]
As the left hand side is a non-zero map, at least one integer number $a_{\mathbf x}$ or $b_{\mathbf y}$  is non-zero. By the assumption $(i)$ and Lemma~\ref{lem:wron}, for a.e.\ $s\in J$ we have
\begin{align*}
\rp f(s)+\rp g(s)+\sum_{h\in \mathscr D}n_h\rp h(s)=
\frac{1}{\sqrt{2}}\Big(\sum_{\mathbf x\in \mathscr{X}_{\mathbf P}}a_{\mathbf x}\mathbf x(s)-\sum_{\mathbf y\in \mathscr{Y}_{\mathbf P}}b_{\mathbf y}\mathbf y(s)\Big)\neq 0.
\end{align*}
Therefore the condition $(i)$ from Theorem~\ref{thm:gencrit} holds.

\smallskip
In order to verify the conditions $(ii_\pm)$ in Theorem~\ref{thm:gencrit} we take any non-zero map $h\in \mathscr D$. In view of \eqref{eq:D},
\begin{align}
\label{typex}
h=e^{i\pi/4}(\mathbf x_1+\mathbf x_2)\quad&\text{for some}\quad  \mathbf x_1,\mathbf x_2\in \mathscr{X}_{\mathbf P}, or\\
\label{typey}
h=e^{i\pi/4}i(\mathbf y_1+\mathbf y_2)\quad&\text{for some}\quad  \mathbf y_1,\mathbf y_2\in \mathscr{Y}_{\mathbf P}.
\end{align}
By Lemma~\ref{lem:setD}, there are   maps in $\mathscr D$ of both types \eqref{typex} and \eqref{typey}.
Moreover,
\begin{align*}
[\rp h(s),\ell(s)]&=\Big[\frac{\mathbf x_1+\mathbf x_2}{\sqrt{2}},\ell\Big](s)=\frac{[\mathbf{x}_1,\ell](s)+[\mathbf{x}_2,\ell](s)}{\sqrt{2}},\text{ or}\\
[\rp h(s),\ell(s)]&=\Big[-\frac{\mathbf y_1+\mathbf y_2}{\sqrt{2}},\ell\Big](s)=-\frac{[\mathbf{y}_1,\ell](s)+[\mathbf{y}_2,\ell](s)}{\sqrt{2}}.
\end{align*}

Under the assumption $(ii_{+-})$, it follows that for all  $h\in \mathscr D$ and  $s\in J$ we have $[\rp h(s),\ell(s)]\geq 0$, and if $h$ is of type \eqref{typey}
then $[\rp h(s),\ell(s)]> 0$ for a.e.\ $s\in J$. This gives the condition $(ii_+)$ in Theorem~\ref{thm:gencrit}.

Under the assumption $(ii_{-+})$, we obtain that $[\rp h(s),\ell(s)]\leq 0$ for all $h\in \mathscr D$ and $s\in J$, and if $h$ is of type \eqref{typey}
then $[\rp h(s),\ell(s)]< 0$ for a.e.\ $s\in J$. This gives the condition $(ii_-)$ in Theorem~\ref{thm:gencrit}.

Therefore, by Theorem~\ref{thm:gencrit}, $r_{\pi/4}M(\mathbf{P}(s))$ is recurrent for a.e.\ $s\in J$. {
In view of Proposition~\ref{prop:mas} and Remark~\ref{rem:rot},} it follows that the translation flow $(\varphi^{\pi/4}_t)_{t\in\R}$  on
$M(\mathbf{P}(s))$ is uniquely ergodic for a.e.\ $s\in J$.
\end{proof}

\section{Unique ergodicity of the billiard flow  on $\mathcal D$ restricted to $\mathcal S_s$.}\label{sec:uniqerg}
Let us consider the billiard flow on a table
\[\mathcal D=\mathcal{D}^{(\overline{\alpha}^{++},\overline{\beta}^{++})(\overline{\alpha}^{+-},\overline{\beta}^{+-})}_{ (\overline{\alpha}^{-+},\overline{\beta}^{-+})(\overline{\alpha}^{--},\overline{\beta}^{--})}.\]
Without loss of generality we can assume  that $\beta^t\leq\beta^b\leq\beta^l\leq\beta^r$.
Recall the phase space $S^1\mathcal D$ of the billiard flow on $\mathcal{D}$ splits into invariant subsets $\mathcal S_s$, $s\in(\beta^t,a)$. By Proposition~\ref{prop:bilflow}, if $s\neq b$ then the billiard flow restricted to $\mathcal S_s$
is topologically conjugate to the directional billiard flow  in  directions $\pm\pi/4,\pm 3\pi/4$ on $\sigma_s( S_s)\in\mathscr{P}$. For more precise description of every generalized polygon $\sigma_s(S_s)$ we need to consider the partition $\mathscr{J}$ of the interval $(\beta^t,a)$ into open intervals by the points $\alpha^{\pm\pm}_i$, $\beta^{\pm\pm}_i$ for $1\leq i\leq k(\overline{\alpha}^{\pm\pm}, \overline{\beta}^{\pm\pm})$. For every open interval $J\in \mathscr{J}$
the generalized polygons $\sigma_s(S_s)$, $s\in J$ have the same combinatorial data and the map $J\ni s\mapsto \sigma_s(S_s)\in \mathscr{P}$ is of class $C^\infty$.
For every $J\in \mathscr{J}$ denote by $l=l^{\pm\pm}_J$ the  largest integer number between $0$ and $k(\overline{\alpha}^{\pm\pm}, \overline{\beta}^{\pm\pm})$ so that
$J\subset (\beta^{\pm\pm}_{l},\alpha^{\pm\pm}_{l-1})$.

{ The following precise description of $\sigma_s(S_s)$ follows directly from the shape of the set $\mathcal D$ and the definition of $\sigma_s$.}
\begin{proposition}\label{prop:desc}
For every $J\in\mathscr J$ all generalized polygons $\sigma_s(S_s)$, $s\in J$ belong to the family $\mathscr P$ (introduced in Section~\ref{sec:poly}) and are described as follows:
\begin{itemize}
\item[$(i)$] if $J\subset (\beta^t,\beta^b)$ then  $\sigma_s(S_s)$ for $s\in J$ consists of two  basic polygons
\[P_{++}(\overline{x}^{++}(s),\overline{y}^{++}(s)), \quad P_{-+}(-\overline{x}^{-+}(s),\overline{y}^{-+}(s))\]
with $k(\overline{x}^{\pm+}(s),\overline{y}^{\pm+}(s))=l^{\pm+}_J$ glued according to the rule $++\sim_V-+$  and
    \[
   x_i^{\pm+}(s)=\int_{\alpha^{\pm+}_i}^ae(\lambda,s)\,d\lambda,\quad y_i^{\pm+}(s)=\int_{\beta^{\pm+}_i}^se(\lambda,s)\,d\lambda=\ell(s)-\int_{-\infty}^{\beta^{\pm+}_i}e(\lambda,s)\,d\lambda
    \]
    for $1\leq i\leq l^{\pm+}_J$;
\item[$(ii)$] if $J\subset (\beta^b,b)$ then  $\sigma_s(S_s)$ for $s\in J$ consists of four  basic polygons
    \begin{align*}
    &P_{++}(\overline{x}^{++}(s),\overline{y}^{++}(s)),\ P_{-+}(-\overline{x}^{-+}(s),\overline{y}^{-+}(s)),\\
    &P_{+-}(-\overline{x}^{+-}(s),\overline{y}^{+-}(s)),\ P_{--}(\overline{x}^{--}(s),\overline{y}^{--}(s))
    \end{align*}
    glued according to the rules
\begin{align*}
  ++\sim_V-+, +-\sim_V-- & \ \text{ if }\  J\subset (\beta^b,\beta^l)\\
  ++\sim_V-+,\ -+\sim_v--,\ --\sim_V+- & \ \text{ if }\ J\subset (\beta^l,\beta^r) \\
  ++\sim_V-+,\ -+\sim_v--,\ --\sim_V+-,\ +-\sim_v++ & \ \text{ if }\ J\subset (\beta^r,b),
\end{align*}
with $k(\overline{x}^{\pm\pm}(s),\overline{y}^{\pm\pm}(s))=l^{\pm\pm}_J$ and
\[
   x_i^{\pm\pm}(s)=\int_{\alpha^{\pm\pm}_i}^ae(\lambda,s)\,d\lambda,\quad y_i^{\pm\pm}(s)=\int_{\beta^{\pm\pm}_i}^se(\lambda,s)\,d\lambda=\ell(s)-\int_{-\infty}^{\beta^{\pm\pm}_i}e(\lambda,s)\,d\lambda
\]
    for $1\leq i\leq l^{\pm\pm}_J$. If $J\subset (\beta^b,\beta^l)$ then $\sigma_s(S_s)$ is not connected and it is the union of two polygons: $\sigma_s(S^+_s)$ glued from $P_{++}$ and $P_{-+}$, and $\sigma_s(S^-_s)$ glued from $P_{+-}$ and $P_{--}$;
\item[$(iii)$] if $J\subset (b,a)$ then  $\sigma_s(S_s)$ for $s\in J$ consists of four  basic polygons
    \begin{align*}
    &P_{++}(\overline{x}^{++}(s),\overline{y}^{++}(s)),\ P_{-+}(-\overline{x}^{-+}(s),\overline{y}^{-+}(s)),\\
    &P_{+-}(\overline{x}^{+-}(s),-\overline{y}^{+-}(s)),\ P_{--}(-\overline{x}^{--}(s),-\overline{y}^{--}(s))
    \end{align*}
    glued according to the rules
    \[++\sim_V-+,\ -+\sim_H--,\ --\sim_V+-,\ +-\sim_H++\]
     with $k(\overline{x}^{\pm\pm}(s),\overline{y}^{\pm\pm}(s))=l^{\pm\pm}_J$ and
    \begin{align*}
   &x_i^{\pm\pm}(s)=\int_{\alpha^{\pm\pm}_i}^ae(\lambda,s)\,d\lambda\ \text{ for }\ 1\leq i< l^{\pm\pm}_J,\\ &x_{l^{\pm\pm}_J}^{\pm\pm}(s)=\int_{s}^ae(\lambda,s)\,d\lambda=\ell(s)=\int_{-\infty}^be(\lambda,s)\,d\lambda,\\
   &y_i^{\pm\pm}(s)=\int_{\beta^{\pm\pm}_i}^b e(\lambda,s)\,d\lambda\ \text{ for }\ 1\leq i\leq l^{\pm\pm}_J.
    \end{align*}
\end{itemize}
\end{proposition}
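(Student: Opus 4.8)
The plan is to establish the four items by direct inspection, the key point being that the change of variables $\sigma_s$ straightens the confocal coordinate net. I would first record the dictionary between the confocal coordinates $(\lambda_1,\lambda_2)$ of Section~\ref{sec:change} and the conics bounding $\mathcal D$: a hyperbolic arc $\mathcal C_{\alpha_i}$ is the vertical segment $\{\lambda_1=\alpha_i\}$, an elliptic arc $\mathcal C_{\beta_i}$ is the horizontal segment $\{\lambda_2=\beta_i\}$, and the two degenerate conics are the coordinate axes, $\mathcal C_a=\{\lambda_1=a\}$ (the $y$-axis) and $\mathcal C_b$ (the $x$-axis). Reading the corner list of Section~\ref{sec:change} through this dictionary exhibits each set $S_s\cap\R^2_{\geq0}$ as an axis-parallel staircase region in the $(\lambda_1,\lambda_2)$-plane, truncated by the caustic.

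First I would treat one quadrant. On $\R^2_{\geq0}$ the chart $\sigma_s$ acts on the two coordinates separately, through the strictly monotone substitutions $\lambda_1\mapsto\int_{\lambda_1}^a e(\lambda,s)\,d\lambda$ and $\lambda_2\mapsto\int_{\lambda_2}^s e(\lambda,s)\,d\lambda$ in the elliptic case (and $\int_{\lambda_2}^b$ in the hyperbolic one). Being a product of two increasing reparametrizations, $\sigma_s$ carries vertical segments to vertical segments and horizontal to horizontal, so $\sigma_s(S_s\cap\R^2_{\geq0})$ is a right-angle staircase polygon of the family $\mathscr P$. Substituting the conic parameters $\lambda_1=\alpha_i$ and $\lambda_2=\beta_i$ of the consecutive corners into these integrals gives exactly the coordinates $x^{\pm\pm}_i,y^{\pm\pm}_i$ asserted in (i)--(iii). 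The caustic is the level $\{\lambda_2=s\}$ for $s<b$ (resp. $\{\lambda_1=s\}$ for $s>b$); it maps to the zero level of the corresponding coordinate and so appears as a single outer side of the polygon, namely the long horizontal side when $s<b$ and the short vertical side $x=\ell(s)$ when $s>b$. Only the steps lying outside the caustic survive this truncation, which is precisely the condition $J\subset(\beta^{\pm\pm}_{l},\alpha^{\pm\pm}_{l-1})$ defining $l=l^{\pm\pm}_J$.

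Next I would assemble the quadrants and read off the gluings. The symmetry-extension formulas for $\sigma_s$ in Section~\ref{sec:change} express its restriction to each non-positive quadrant as a fixed translate of $\sigma_s\circ\gamma$ with $\gamma\in\Gamma$; comparison with the definition of $P(\gamma(\overline x,\overline y))$ identifies the four images as the signed basic polygons $P_{++},P_{-+},P_{+-},P_{--}$. Two quadrants sharing a coordinate semi-axis are glued along the side that this semi-axis becomes. The $y$-axis lies on $\mathcal C_a=\{\lambda_1=a\}$ and maps to the long vertical side, forcing the relations of type $\sim_V$ (such as $++\sim_V-+$). The bounding portion of the $x$-axis lies on $\mathcal C_b$; for $s<b$ the caustic ellipse meets it beyond the foci, on the branch $\{\lambda_1=b\}$, which maps to the short vertical side and yields the $\sim_v$ relations, whereas for $s>b$ the caustic hyperbola meets it between the foci, on the focal segment $\{\lambda_2=b\}$, which maps to the long horizontal side and yields the $\sim_H$ relations. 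Equality of the lengths of two matched sides is automatic, as they are $\sigma_s$-images of one conic arc taken between the same endpoints.

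It remains to determine, in each subinterval, which quadrants occur and which seams are active; this combinatorial bookkeeping, tied to the normalization $\beta^t\leq\beta^b\leq\beta^l\leq\beta^r$, is the step I expect to be the main obstacle. The mechanism is uniform: a quadrant contributes exactly when its innermost step lies outside the caustic, and a semi-axis on $\mathcal C_b$ becomes an interior gluing seam exactly when the caustic meets it inside the table, i.e. when $s$ exceeds the outermost step parameter of the two adjoining quadrants. Thus for $s\in(\beta^t,\beta^b)$ the lower innermost steps $\beta^{+-}_1=\beta^{--}_1=\beta^b$ lie inside the caustic, only the two upper quadrants survive, and one obtains item (i). For $s\in(\beta^b,b)$ all four elliptic quadrants occur; the negative and the positive $x$-axes become $\sim_v$ seams as $s$ passes the left outermost parameter $\beta^{-+}_{k}=\beta^{--}_{k}=\beta^l$ and then the right one $\beta^{++}_{k}=\beta^{+-}_{k}=\beta^r$, while for $s<\beta^l$ no such seam is present and the surface splits along the caustic into the two pieces $\sigma_s(S^+_s)$ and $\sigma_s(S^-_s)$; this gives item (ii). Finally, for $s\in(b,a)$ the caustic is a hyperbola, $S_s=\mathcal D\cap\mathcal H_s$, and the same analysis with the substitution $\int_{\lambda_2}^b$ places the caustic on the short vertical side $x=\ell(s)$ and turns the focal segment $\{\lambda_2=b\}$ into a long horizontal $\sim_H$ seam, producing the four basic polygons and the gluing pattern of item (iii). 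Matching each case against Figure~\ref{fig:Splitting} completes the verification.
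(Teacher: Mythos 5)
Your proposal is correct and follows essentially the same route as the paper: the paper's entire proof of Proposition~\ref{prop:desc} is the single remark that the description ``follows directly from the shape of the set $\mathcal D$ and the definition of $\sigma_s$,'' and your argument is exactly that direct inspection (confocal coordinates straighten the boundary arcs, the caustic becomes the long horizontal side for $s<b$ and the short vertical side $x=\ell(s)$ for $s>b$, the symmetry extension of $\sigma_s$ yields the four signed basic polygons, and comparison of $s$ with $\beta^t,\beta^b,\beta^l,\beta^r$ determines which quadrants and which $\sim_V,\sim_v,\sim_H$ seams occur), just carried out in the detail the paper omits.
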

\begin{figure}[h]
\includegraphics[width=1 \textwidth]{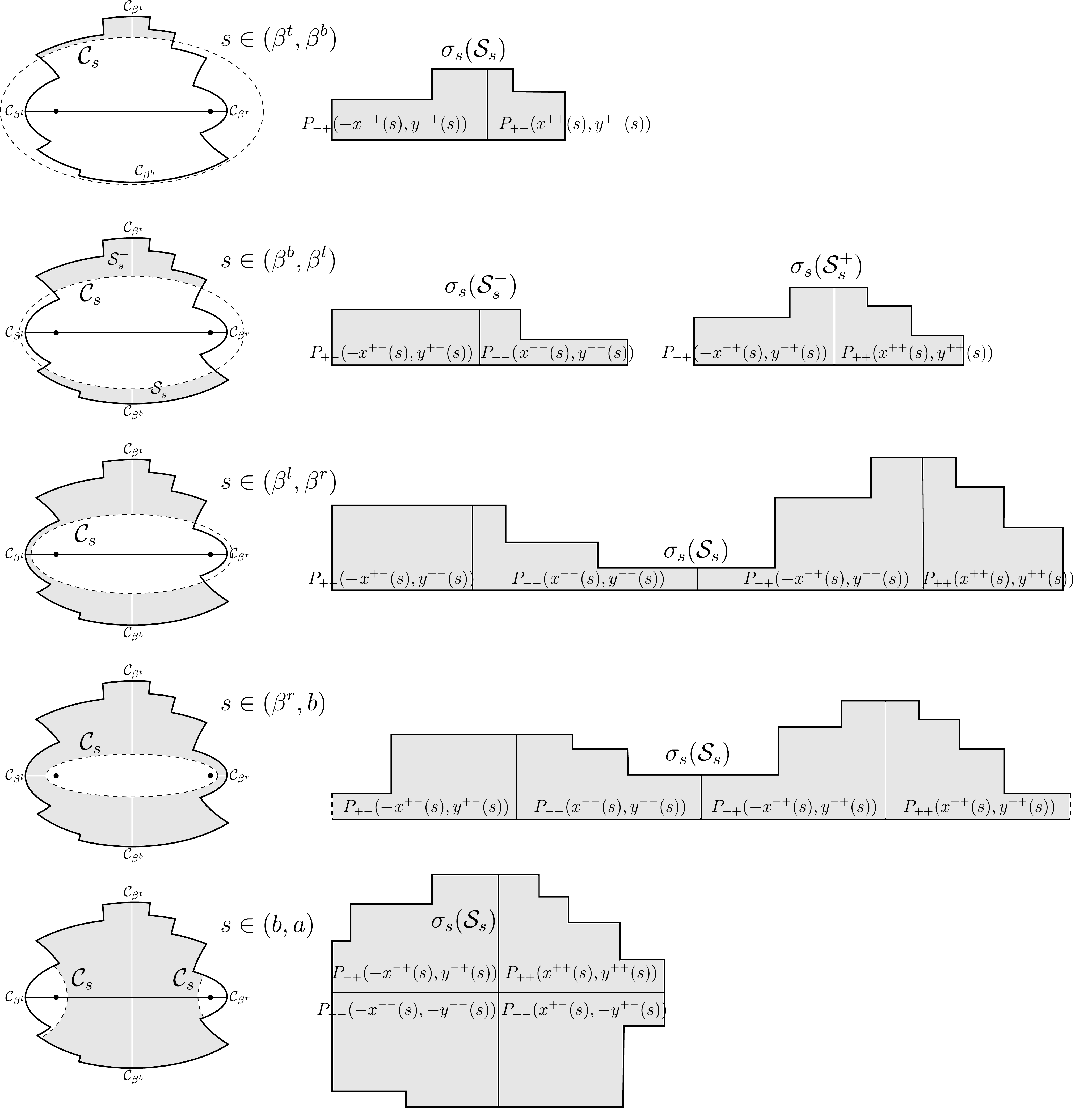}
\caption{All possible types of the polygon $\sigma_s(S_s)$.}
\label{fig:sigmaSs}
\end{figure}

For every $s<a$ denote by $\Delta_s\subset\R$ the domain of $\lambda\mapsto e(\lambda,s)=\tfrac{1}{\sqrt{(a-\lambda)(b-\lambda)(s-\lambda)}}$.
If $s<b$ then $\Delta_s=(-\infty,s)\cup(b,a)$ and
\[\int_{\Delta_s} e(\lambda,s)\,d\lambda= 2\int_b^a e(\lambda,s)\,d\lambda<+\infty.\]
If $b<s<a$ then $\Delta_s=(-\infty,b)\cup(s,a)$ and
\[\int_{\Delta_s} e(\lambda,s)\,d\lambda= 2\int_{-\infty}^b e(\lambda,s)\,d\lambda<+\infty.\]

Take an interval $J\in\mathscr{J}$ and an open interval $D$ such that $D\subset \Delta_s$ for every $s\in J$.
Denote by $\xi_D:J\to\R_{>0}$ the map given by $\xi_D(s)=\int_De(\lambda,s)\,d\lambda$. Then $\xi_D$ is a $C^\infty$-map
such that
\begin{equation}\label{eq:derxi}
\frac{d^k}{ds^k}\xi_D(s)=\frac{(2k-1)!!}{2^k}\int_D\frac{e(\lambda,s)}{(\lambda-s)^k}\,d\lambda\ \text{ for all }\ k\geq 1, \ s\in J.
\end{equation}

Fix an interval $J\in\mathscr{J}$. Then the family of polygons $\sigma_s(S_s)$, $s\in J$ is determined
be a $C^\infty$-map $J\ni s\mapsto \mathbf{P}(s)\in\mathscr{P}$. { In view of Proposition~\ref{prop:desc},
we have the following result.}

\begin{corollary}\label{cor:XYl}
For every interval $J\in\mathscr J$ there exist
\[a> \alpha_{1}>\alpha_{2}>\ldots>\alpha_m>b>\beta_n>\ldots>\beta_2>\beta_1\geq 0 \ \text{ with }\ J\subset(\beta_n,\alpha_m)\]
such that  if  $J\subset(\beta_t,b)$ then
\[
 \mathscr{X}_{\mathbf{P}}\cup\{\ell\}=\{\ell=\xi_{(b,a)},\ \xi_{(\alpha_i,a)}: 1\leq i\leq m\},\quad
   \mathscr{Y}_{\mathbf{P}}=\{\ell-\xi_{(-\infty,\beta_j)}: 1\leq j\leq n\}
 \]
and if  $J\subset(b,a)$ then
\[
\mathscr{X}_{\mathbf{P}}\cup\{\ell\}=\{\ell=\xi_{(-\infty,b)},\ \xi_{(\alpha_i,a)}: 1\leq i\leq m\},\quad
   \mathscr{Y}_{\mathbf{P}}=\{\xi_{(\beta_j,b)}:1\leq j\leq n\}.
\]
\end{corollary}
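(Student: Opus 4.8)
The plan is to treat this as a transcription: Proposition~\ref{prop:desc} already lists every coordinate $x_i^{\pm\pm}(s)$, $y_i^{\pm\pm}(s)$ of every basic polygon making up $\sigma_s(S_s)$, so all that remains is to rewrite these coordinates through the maps $\xi_D(s)=\int_D e(\lambda,s)\,d\lambda$, to collect them, and to discard repetitions. I would first fix $J\in\mathscr J$ and separate the elliptic caustic case $J\subset(\beta^t,b)$ (cases $(i)$ and $(ii)$ of Proposition~\ref{prop:desc}) from the hyperbolic caustic case $J\subset(b,a)$ (case $(iii)$).

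In the elliptic case I would use $\int_c^a e(\lambda,s)\,d\lambda=\xi_{(c,a)}(s)$, $\int_{-\infty}^c e(\lambda,s)\,d\lambda=\xi_{(-\infty,c)}(s)$ and the identity $\ell(s)=\int_b^a e(\lambda,s)\,d\lambda=\xi_{(b,a)}(s)$ to turn the formulas of Proposition~\ref{prop:desc} into
\[x_i^{\pm\pm}(s)=\xi_{(\alpha_i^{\pm\pm},a)}(s),\qquad y_i^{\pm\pm}(s)=\ell(s)-\xi_{(-\infty,\beta_i^{\pm\pm})}(s)\quad(1\le i\le l^{\pm\pm}_J).\]
In the hyperbolic case the relevant identities are $\ell(s)=\int_{-\infty}^b e(\lambda,s)\,d\lambda=\xi_{(-\infty,b)}(s)$ and $\int_c^b e(\lambda,s)\,d\lambda=\xi_{(c,b)}(s)$, which give $x_i^{\pm\pm}=\xi_{(\alpha_i^{\pm\pm},a)}$ for $1\le i<l^{\pm\pm}_J$, the top coordinate $x^{\pm\pm}_{l^{\pm\pm}_J}=\ell=\xi_{(-\infty,b)}$, and $y_i^{\pm\pm}=\xi_{(\beta_i^{\pm\pm},b)}$ for $1\le i\le l^{\pm\pm}_J$. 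Each interval $D$ arising here lies in $\Delta_s$ for every $s\in J$ (this uses $\beta_i^{\pm\pm}<\inf J$ in the elliptic case and $\alpha_i^{\pm\pm}\ge\sup J$ in the hyperbolic one), so these $\xi_D$ are genuinely defined on $J$; and in both cases every function in $\mathscr X_{\mathbf P}$, together with $\ell$, equals $\ell$ or some $\xi_{(\alpha_i^{\pm\pm},a)}$, while every function in $\mathscr Y_{\mathbf P}$ has the asserted shape.

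Next I would pass from indexed parameters to their distinct values. Because $e(\lambda,s)>0$, for each fixed $s$ the maps $c\mapsto\xi_{(c,a)}(s)$ and $c\mapsto\xi_{(c,b)}(s)$ are strictly decreasing and $c\mapsto\xi_{(-\infty,c)}(s)$ is strictly increasing; therefore distinct parameter values produce functions that already differ pointwise, and no $\xi_{(\alpha_i,a)}$ can equal $\ell$ since every $\alpha_i>b$. Listing the finitely many distinct values occurring among the $\alpha_i^{\pm\pm}$ over all surviving polygons and all signs, in decreasing order, yields $a>\alpha_1>\dots>\alpha_m>b$; doing the same for the $\beta_i^{\pm\pm}$ yields $b>\beta_n>\dots>\beta_1\ge0$. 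With this labelling the two displayed sets in the statement are exactly $\mathscr X_{\mathbf P}\cup\{\ell\}$ and $\mathscr Y_{\mathbf P}$.

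It remains to check $J\subset(\beta_n,\alpha_m)$, and this is the step I would handle most carefully, since it is where the definition of $l^{\pm\pm}_J$ enters. One inclusion is free in each case: all the $\alpha$'s exceed $b$ and all the $\beta$'s lie below $b$, so in the elliptic case $\sup J\le b<\alpha_m$ while in the hyperbolic case $\inf J\ge b>\beta_n$. For the other inclusion I would recall that $l^{\pm\pm}_J$ is the \emph{largest} integer with $J\subset(\beta^{\pm\pm}_{l},\alpha^{\pm\pm}_{l-1})$. In the elliptic case the largest $\beta$-value that appears is $\max_{\pm\pm}\beta^{\pm\pm}_{l^{\pm\pm}_J}$, each term satisfies $\beta^{\pm\pm}_{l^{\pm\pm}_J}\le\inf J$, so $\beta_n\le\inf J$ and $J\subset(\beta_n,\cdot)$ because $J$ is open. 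In the hyperbolic case the smallest $\alpha$-value that appears is $\min_{\pm\pm}\alpha^{\pm\pm}_{l^{\pm\pm}_J-1}$ — here the one delicate point is that the topmost horizontal coordinate has collapsed to $\ell$ and contributes no hyperbola parameter, so the $\alpha$-indices stop at $l^{\pm\pm}_J-1$ — and each such value is $\ge\sup J$, whence $\alpha_m\ge\sup J$ and $J\subset(\cdot,\alpha_m)$. Apart from this index bookkeeping the argument is purely mechanical.
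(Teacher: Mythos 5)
Your overall strategy --- transcribing Proposition~\ref{prop:desc} into the $\xi_D$ notation, collecting distinct parameter values, and extracting $J\subset(\beta_n,\alpha_m)$ from the definition of $l^{\pm\pm}_J$ --- is exactly the paper's (whose proof is the single remark that the corollary follows from Proposition~\ref{prop:desc}), and most of your write-up is a correct, more detailed version of it.

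There is, however, one concrete slip, in the elliptic case. You assert that every hyperbola parameter occurring there satisfies $\alpha_i^{\pm\pm}>b$, and you use this both to list the collected values as $a>\alpha_1>\dots>\alpha_m>b$ and to conclude that no $\xi_{(\alpha_i,a)}$ can equal $\ell$. This fails whenever $l^{\pm\pm}_J=k(\overline{\alpha}^{\pm\pm},\overline{\beta}^{\pm\pm})$: by the definition of $\Theta$ one has $\alpha^{\pm\pm}_{k}=b$, so the top coordinate is
\[
x^{\pm\pm}_{l^{\pm\pm}_J}=\xi_{(\alpha^{\pm\pm}_{k},a)}=\xi_{(b,a)}=\ell .
\]
In other words, the elliptic case exhibits exactly the same ``collapse to $\ell$'' phenomenon that you correctly flagged as the delicate point in the hyperbolic case; but while Proposition~\ref{prop:desc}$(iii)$ displays it explicitly, in cases $(i)$--$(ii)$ it is hidden inside the uniform formula $x_i^{\pm\pm}=\xi_{(\alpha_i^{\pm\pm},a)}$, $1\le i\le l^{\pm\pm}_J$. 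This scenario is not exotic: for the interval $J=\bigl(\max_{\pm\pm}\beta^{\pm\pm}_{k},\,b\bigr)\in\mathscr{J}$ adjacent to $b$ one checks $l^{\pm\pm}_J=k(\overline{\alpha}^{\pm\pm},\overline{\beta}^{\pm\pm})$ for all sign pairs, so it occurs for every nibbled ellipse. The repair is immediate and leaves the rest of your argument intact: define $\alpha_1>\dots>\alpha_m$ to be the distinct values among the occurring $\alpha^{\pm\pm}_i$ that are \emph{strictly} larger than $b$; the excluded value $b$ contributes the function $\xi_{(b,a)}=\ell$, which already belongs to $\mathscr{X}_{\mathbf P}\cup\{\ell\}$, so the asserted set equality, the ordering, and $J\subset(\beta_n,\alpha_m)$ all survive (in the degenerate situation where this exclusion empties the list, interpret $\alpha_m$ as $\alpha_0=a$, and the inclusion is trivial since $\sup J\le b$).
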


\begin{lemma}[Lemma~3.3 in \cite{Fr-Shi-Ul}]\label{lem:nonzerodet}
Let $f:(c_1,c_2)\cup(c_3,c_4)\to\R_{>0}$ $(-\infty\le c_1<c_2<c_3<c_4\le \infty)$ be a
positive  continuous function with finite integrals
$
\int_{c_1}^{c_2}f(\lambda)\, d \lambda$
and
 $ \int_{c_3}^{c_4}f(\lambda)\, d \lambda$.
If $\{A_i:1\leq i\leq
k\}$ is a family of pairwise disjoint subintervals of
$(c_1,c_2)\cup(c_3,c_4)$, then we have
\begin{equation*}\label{ineq:int}
\det\left[\int_{A_{i}}\frac{f(\lambda)\,d\lambda}{(\lambda-s)^{j-1}}\right]_{i,j=1,\ldots,k}\neq 0\quad\text{for every }\ s\in(c_2,c_3).
\end{equation*}
\end{lemma}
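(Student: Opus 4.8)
The plan is to prove the nonvanishing by the classical device of expanding the determinant by multilinearity in the rows and then recognizing a Vandermonde determinant after the substitution $u=1/(\lambda-s)$. Before anything else I would check that the determinant is well defined: for $s\in(c_2,c_3)$ and $\lambda\in(c_1,c_2)\cup(c_3,c_4)$ one has $|\lambda-s|\geq\delta:=\min\{s-c_2,\,c_3-s\}>0$, so each integrand is dominated by $\delta^{-(j-1)}f(\lambda)$ and is therefore integrable thanks to the finiteness of $\int_{c_1}^{c_2}f$ and $\int_{c_3}^{c_4}f$.

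The main step is to regard the $i$-th row as $\int_{A_i}f(\lambda)\big(1,\tfrac{1}{\lambda-s},\dots,\tfrac{1}{(\lambda-s)^{k-1}}\big)\,d\lambda$ and to use that the determinant is multilinear and alternating in its rows. This collapses the determinant of integrals into a single multiple integral of a determinant,
\[\det\Big[\int_{A_i}\tfrac{f(\lambda)}{(\lambda-s)^{j-1}}\,d\lambda\Big]_{i,j}=\int_{A_1}\!\!\cdots\!\int_{A_k}\Big(\prod_{i=1}^{k}f(\lambda_i)\Big)\det\Big[\tfrac{1}{(\lambda_i-s)^{j-1}}\Big]_{i,j}\,d\lambda_1\cdots d\lambda_k,\]
the interchange of the permutation sum with the integrals being legitimate by the uniform bounds just noted. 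Setting $u_i:=1/(\lambda_i-s)$, the inner determinant is exactly the Vandermonde determinant $\det[u_i^{\,j-1}]_{i,j}=\prod_{1\le i<i'\le k}(u_{i'}-u_i)$.

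It then remains to show that this Vandermonde keeps a constant, nonzero sign on $A_1\times\cdots\times A_k$. I would establish this from the injectivity of $\lambda\mapsto 1/(\lambda-s)$ on $(c_1,c_2)\cup(c_3,c_4)$: the map is strictly decreasing on each of the two intervals, and it carries $(c_1,c_2)$ into the negative half-line and $(c_3,c_4)$ into the positive half-line, so even though it is not globally monotone it never repeats a value. Because the intervals $A_i$ are pairwise disjoint, at every point of $A_1\times\cdots\times A_k$ the coordinates $\lambda_i$ are pairwise distinct, hence so are the $u_i$; thus every factor $u_{i'}-u_i$ is nonzero and the Vandermonde never vanishes. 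Since $A_1\times\cdots\times A_k$ is connected and the Vandermonde is continuous, it has constant sign there, and as $\prod_i f(\lambda_i)>0$ the entire integrand has constant sign while not being identically zero. Consequently the integral, and with it the determinant, is nonzero.

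I expect the only genuinely delicate point — the main obstacle — to be the constant-sign claim, and within it precisely the \emph{global} injectivity of $\lambda\mapsto 1/(\lambda-s)$ across the gap $(c_2,c_3)$, since the map fails to be monotone there and one must use the sign separation of the two half-lines rather than naive monotonicity. Once that observation is secured, the remaining Vandermonde bookkeeping and the final positivity conclusion are routine.
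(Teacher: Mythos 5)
Your proof is correct, but there is nothing in this paper to compare it against: the author does not prove Lemma~\ref{lem:nonzerodet} here at all, importing it by citation as Lemma~3.3 of \cite{Fr-Shi-Ul}. Taken on its own terms, your argument is complete. The multilinearity step is the classical Andr\'eief-type identity
$\det\bigl[\int_{A_i}f(\lambda)(\lambda-s)^{-(j-1)}\,d\lambda\bigr]_{i,j}
=\int_{A_1}\cdots\int_{A_k}\prod_{i}f(\lambda_i)\det\bigl[(\lambda_i-s)^{-(j-1)}\bigr]_{i,j}\,d\lambda_1\cdots d\lambda_k$,
and your domination bound $|\lambda-s|\ge\min\{s-c_2,\,c_3-s\}>0$ legitimizes the interchange of the permutation sum with the integrals. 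You also correctly isolate the crux: $\lambda\mapsto 1/(\lambda-s)$ is injective on the disconnected domain because it is strictly decreasing on each component and takes negative values on $(c_1,c_2)$ and positive values on $(c_3,c_4)$ --- this is exactly where the hypothesis $s\in(c_2,c_3)$ enters. Pairwise disjointness of the $A_i$ then makes the Vandermonde factor nowhere zero on the connected product $A_1\times\cdots\times A_k$, hence of constant sign, and positivity of $f$ gives a nonzero integral. Two minor remarks. First, your last step tacitly assumes each $A_i$ has positive length; this is an implicit hypothesis of the lemma anyway (a degenerate $A_i$ would produce a zero row and a zero determinant), and it holds in the paper's application (Corollary~\ref{cor:wron}, where the $A_i$ are nonempty open intervals). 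Second, there is a dual route that avoids both Andr\'eief and Fubini: if the determinant vanished, a nonzero vector $(c_j)_{j=1}^k$ with $\sum_j c_j\int_{A_i}f(\lambda)(\lambda-s)^{-(j-1)}\,d\lambda=0$ for every $i$ would produce a continuous function $f(\lambda)\sum_j c_j(\lambda-s)^{-(j-1)}$ with zero integral, hence a zero, in each of the $k$ disjoint intervals; but multiplying by $(\lambda-s)^{k-1}/f(\lambda)$ yields a nonzero polynomial of degree at most $k-1$, which admits at most $k-1$ zeros --- a contradiction. Both arguments rest on the same Chebyshev-system property of $1,(\lambda-s)^{-1},\ldots,(\lambda-s)^{-(k-1)}$ on the split domain; yours has the small bonus of pinning down the sign of the determinant, the dual one requires less measure-theoretic bookkeeping.
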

As a consequence, in view of \eqref{eq:derxi}, we obtain the following result.
\begin{corollary}\label{cor:wron}
Suppose that $D_i$, $1\leq i\leq k$ is a family of pairwise disjoint open intervals such that $\bigcup_{i=1}^kD_i\subset\Delta_s$ for all $s\in J$.
Then
\[|W|((\xi_{D_j})_{j=1}^k)(s)> 0\quad\text{for all}\quad s\in J.\]
\end{corollary}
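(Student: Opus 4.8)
The plan is to read the Wronskian through the differentiation formula \eqref{eq:derxi} and then invoke Lemma~\ref{lem:nonzerodet}. Fix $s\in J$. By \eqref{eq:derxi}, the $(p,q)$-entry of the Wronskian matrix of the family $(\xi_{D_q})_{q}$ is
\[
\frac{d^{p-1}}{ds^{p-1}}\xi_{D_q}(s)=\frac{(2p-3)!!}{2^{\,p-1}}\int_{D_q}\frac{e(\lambda,s)}{(\lambda-s)^{p-1}}\,d\lambda,
\]
with the convention $(-1)!!=1$ for the row $p=1$. Since the scalar $\tfrac{(2p-3)!!}{2^{\,p-1}}>0$ depends only on the row index $p$, it can be pulled out row by row, so that $|W|((\xi_{D_q})_{q})(s)$ equals a strictly positive constant times the absolute value of $\det[\int_{D_q}\tfrac{e(\lambda,s)}{(\lambda-s)^{p-1}}\,d\lambda]_{p,q}$. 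After transposing (which does not affect vanishing), this last determinant is precisely the object in Lemma~\ref{lem:nonzerodet}, taken with $f=e(\,\cdot\,,s)$ and $A_i=D_i$. Thus it suffices to verify that Lemma~\ref{lem:nonzerodet} genuinely applies.

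Here lies the only real difficulty. Lemma~\ref{lem:nonzerodet} requires the evaluation point $s$ to lie in the \emph{open gap} $(c_2,c_3)$ between the two intervals carrying $f$, whereas $s$ is an \emph{endpoint} of the domain $\Delta_s$ of $e(\,\cdot\,,s)$: indeed $\Delta_s=(-\infty,s)\cup(b,a)$ when $s<b$ and $\Delta_s=(-\infty,b)\cup(s,a)$ when $b<s<a$, so in both cases $e(\,\cdot\,,s)$ blows up at $\lambda=s$. The way around this is to use the hypothesis $\bigcup_i D_i\subset\Delta_{s'}$ for \emph{every} $s'\in J$, not merely for the fixed $s$. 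Since $s$ is interior to the open interval $J$, I can choose $s'\in J$ on the appropriate side of $s$ and use the inclusion at $s'$ to conclude that the intervals $D_i$ are bounded away from $s$; this lets me shrink one of the two constituent intervals so that $s$ falls into the interior of the gap.

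Concretely, in the case $s<b$ I would pick $s'\in J$ with $s'<s$; then $\bigcup_i D_i\subset\Delta_{s'}=(-\infty,s')\cup(b,a)$, so each $D_i$ is a subinterval of $(-\infty,s')\cup(b,a)$, and I apply Lemma~\ref{lem:nonzerodet} with $c_1=-\infty$, $c_2=s'$, $c_3=b$, $c_4=a$. The function $e(\,\cdot\,,s)$ is positive and continuous on $(-\infty,s')\cup(b,a)$ (its singularity at $\lambda=s$ is excluded because $s'<s<b$), and its integrals over $(-\infty,s')$ and over $(b,a)$ are finite by the bounds for $\int_{\Delta_s}e(\lambda,s)\,d\lambda$ recorded before the corollary; moreover $s\in(s',b)=(c_2,c_3)$, as required. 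The case $b<s<a$ is entirely symmetric: one takes $s'\in J$ with $s'>s$, works with $\Delta_{s'}=(-\infty,b)\cup(s',a)$, and notes $s\in(b,s')$. In either case Lemma~\ref{lem:nonzerodet} gives $\det[\int_{D_i}\tfrac{e(\lambda,s)}{(\lambda-s)^{j-1}}\,d\lambda]_{i,j}\neq0$, whence, by the reduction of the first paragraph, $|W|((\xi_{D_q})_{q})(s)$ is a positive constant times a nonzero quantity and so is strictly positive. As $s\in J$ was arbitrary, this holds for all $s\in J$, which is the assertion.
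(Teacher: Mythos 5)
Your proof is correct and follows essentially the same route as the paper, which simply cites \eqref{eq:derxi} together with Lemma~\ref{lem:nonzerodet} and leaves the details to the reader. Your additional care in choosing an auxiliary parameter $s'\in J$ so that the evaluation point $s$ lies strictly inside the gap $(c_2,c_3)$ required by Lemma~\ref{lem:nonzerodet} is exactly the right way to make the implicit application rigorous, and it is precisely how the hypothesis ``$\bigcup_i D_i\subset\Delta_s$ for \emph{all} $s\in J$'' is meant to be used.
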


\begin{lemma}\label{lem:bracket}
Suppose that $D_1$, $D_2$ are disjoint open intervals such that $D_1\cup D_2\subset\Delta_s$ for all $s\in J$. Then
\begin{align*}
[\xi_{D_1},\xi_{D_2}](s)<0\ \text{ for all }\ s\in J\ &\text{ if }\ D_2<D_1<J\ \text{ or }\ J<D_2<D_1,\\
[\xi_{D_1},\xi_{D_2}](s)>0\ \text{ for all }\ s\in J\ &\text{ if }\ D_2<J<D_1.
\end{align*}
\end{lemma}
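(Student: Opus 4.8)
The plan is to rewrite the bracket as a single absolutely convergent double integral whose integrand has a manifestly constant sign in each of the three configurations, so that the conclusion reduces to elementary sign bookkeeping. First I would use the case $k=1$ of formula \eqref{eq:derxi}, which reads
\[
\xi_D'(s)=\tfrac{1}{2}\int_D\frac{e(\lambda,s)}{\lambda-s}\,d\lambda ,
\]
together with $\xi_D(s)=\int_D e(\lambda,s)\,d\lambda$. Since $D_1\cup D_2\subset\Delta_s$, the density $e(\cdot,s)$ is strictly positive on both intervals, so all four quantities $\xi_{D_i}(s),\xi_{D_i}'(s)$ are well defined. Writing out $[\xi_{D_1},\xi_{D_2}](s)=\xi_{D_1}'(s)\,\xi_{D_2}(s)-\xi_{D_1}(s)\,\xi_{D_2}'(s)$ as a difference of products of integrals, labelling the $D_1$-variable by $\lambda$ and the $D_2$-variable by $\mu$, and combining over the rectangle $D_1\times D_2$, I obtain
\[
[\xi_{D_1},\xi_{D_2}](s)=\tfrac{1}{2}\int_{D_1}\!\!\int_{D_2}e(\lambda,s)\,e(\mu,s)\Big(\frac{1}{\lambda-s}-\frac{1}{\mu-s}\Big)\,d\mu\,d\lambda .
\]
Placing the inner factor over a common denominator collapses it to $\dfrac{\mu-\lambda}{(\lambda-s)(\mu-s)}$, giving
\[
[\xi_{D_1},\xi_{D_2}](s)=\tfrac{1}{2}\int_{D_1}\!\!\int_{D_2}\frac{e(\lambda,s)\,e(\mu,s)\,(\mu-\lambda)}{(\lambda-s)(\mu-s)}\,d\mu\,d\lambda .
\]

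The remaining step, which is the crux of the argument, is a sign analysis of the integrand. Because $e(\lambda,s)e(\mu,s)>0$ throughout $D_1\times D_2$, only the three linear factors $\mu-\lambda$, $\lambda-s$, $\mu-s$ matter, and each of them keeps a fixed sign on the whole rectangle once the position of $s\in J$ relative to $D_1,D_2$ is fixed. If $D_2<D_1<J$, then every $\lambda\in D_1$ and $\mu\in D_2$ satisfies $\mu<\lambda<s$, so $\mu-\lambda<0$ while $(\lambda-s)(\mu-s)>0$; hence the integrand is negative and the integral is $<0$. If $J<D_2<D_1$, then $s<\mu<\lambda$, so again $\mu-\lambda<0$ and $(\lambda-s)(\mu-s)>0$, yielding $<0$. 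Finally, if $D_2<J<D_1$, then $\mu<s<\lambda$, so $\mu-\lambda<0$ but now $(\lambda-s)(\mu-s)<0$, making the integrand positive and the integral $>0$. These three outcomes are exactly the assertions of the lemma, and the positive constant $\tfrac12$ plays no role in the signs.

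I expect no essential obstacle here: the content is entirely the observation that the interval orderings $D_2<D_1<J$, $J<D_2<D_1$, and $D_2<J<D_1$ force each of the three linear factors to be of one sign on $D_1\times D_2$, which is immediate since $s\in J$. The only point deserving a word is absolute convergence, justifying the use of Fubini's theorem to pass from the product of one-dimensional integrals to the double integral: for a fixed $s\in J$ the factors $\lambda-s$ and $\mu-s$ are bounded away from $0$ on $D_1$ and $D_2$ respectively (in each case $s$ lies strictly on one side of each interval, since $J$ is open and $D_1,D_2$ are separated from $J$ by the stated orderings), and the only remaining singularities of $e(\cdot,s)$ are the integrable square-root singularities at the endpoints of $\Delta_s$ already built into \eqref{eq:derxi}. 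Hence the integrand is integrable over $D_1\times D_2$ and the manipulation is legitimate, completing the proof.
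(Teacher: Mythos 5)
Your proof is correct and follows essentially the same route as the paper: both use the $k=1$ case of \eqref{eq:derxi} to express the bracket as a single double integral of $e(\lambda,s)e(\mu,s)(\mu-\lambda)/\bigl((\lambda-s)(\mu-s)\bigr)$ over $D_1\times D_2$ and then read off the sign of each linear factor from the interval orderings. Your added remark on absolute convergence and Fubini is a harmless elaboration of a step the paper leaves implicit.
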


\begin{proof}
In view of \eqref{eq:derxi}, we have
\begin{align*}
[\xi_{D_1},\xi_{D_2}](s)&=\frac{1}{2}\int_{D_1\times D_2}\left(\frac{e(\lambda_1,s)e(\lambda_2,s)}{\lambda_1-s}-\frac{e(\lambda_1,s)e(\lambda_2,s)}{\lambda_2-s}\right)d\lambda_1d\lambda_2\\
&=\frac{1}{2}\int_{D_1\times D_2}\frac{e(\lambda_1,s)e(\lambda_2,s)(\lambda_2-\lambda_1)}{(\lambda_1-s)(\lambda_2-s)}d\lambda_1d\lambda_2.
\end{align*}
Since for all $\lambda_1\in D_1$,  $\lambda_2\in D_2$, $s\in J$ we have $\lambda_2-\lambda_1<0$
and $(\lambda_1-s)(\lambda_2-s)>0$ if $ D_2<D_1<J$ or $J<D_2<D_1$ and $(\lambda_1-s)(\lambda_2-s)<0$ if $ D_2<J<D_1$, this implies the required inequalities.
\end{proof}

\begin{theorem}\label{thm:wronbrack}
For every $J\in\mathscr{J}$ we have $|W|(\mathscr X_{\mathbf P}\cup\mathscr Y_{\mathbf P}\cup\{\ell\})(s)> 0$ for every $s\in J$. Moreover,
for all $\mathbf x\in \mathscr X_{\mathbf P}$, $\mathbf y\in \mathscr Y_{\mathbf P}$ and $s\in J$ we have
\begin{align*}
&[\mathbf x,\ell](s)\leq 0\ \text{ and }\ [\mathbf y,\ell](s)>0\ \text{ if }\  J\subset(\beta^t,b),\\
&[\mathbf x,\ell](s)\geq 0\ \text{ and }\ [\mathbf y,\ell](s)<0\ \text{ if }\  J\subset(b,a).
\end{align*}
\end{theorem}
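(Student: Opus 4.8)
The plan is to reduce everything to the two tools already established for the functions $\xi_D(s)=\int_D e(\lambda,s)\,d\lambda$: the Wronskian positivity of Corollary~\ref{cor:wron} and the sign computation of Lemma~\ref{lem:bracket}, both of which demand that the intervals $D$ be \emph{pairwise disjoint} and contained in $\Delta_s$. By Corollary~\ref{cor:XYl} every member of $\mathscr X_{\mathbf P}\cup\mathscr Y_{\mathbf P}\cup\{\ell\}$ is either of the form $\xi_D$ or of the form $\ell-\xi_D$, so the statement becomes a statement about finitely many $\xi_D$'s whose intervals are \emph{nested} rather than disjoint. The elementary fact I would lean on throughout is the additivity $\xi_{D}=\xi_{D'}+\xi_{D''}$ whenever $D$ is the disjoint union of $D'$ and $D''$; this is what lets me pass between nested and disjoint families.

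For the Wronskian I would exploit that $|W|$ is, up to sign, a determinant, hence invariant under adding a scalar multiple of one function-column (together with all its derivatives) to another. Consider first $J\subset(\beta^t,b)$, where $\ell=\xi_{(b,a)}$, the $\mathbf x$'s are $\xi_{(\alpha_i,a)}$ and the $\mathbf y$'s are $\ell-\xi_{(-\infty,\beta_j)}$. Subtracting the $\ell$-column from each $\mathbf y$-column replaces it by $\xi_{(-\infty,\beta_j)}$; then successive differencing of the nested families $\xi_{(b,a)},\xi_{(\alpha_m,a)},\dots,\xi_{(\alpha_1,a)}$ and $\xi_{(-\infty,\beta_1)},\dots,\xi_{(-\infty,\beta_n)}$, using additivity, turns the columns into the pairwise disjoint family $\xi_{(\alpha_1,a)},\xi_{(\alpha_2,\alpha_1)},\dots,\xi_{(\alpha_m,\alpha_{m-1})},\xi_{(b,\alpha_m)}$ together with $\xi_{(-\infty,\beta_1)},\xi_{(\beta_1,\beta_2)},\dots,\xi_{(\beta_{n-1},\beta_n)}$. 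These tile disjoint parts of $(b,a)$ and of $(-\infty,\beta_n)\subset(-\infty,s)$, so their union lies in $\Delta_s=(-\infty,s)\cup(b,a)$, and Corollary~\ref{cor:wron} yields $|W|>0$. The case $J\subset(b,a)$ is identical in spirit, with $\ell=\xi_{(-\infty,b)}$: after differencing the $\alpha$- and $\beta$-families and replacing $\ell$ by $\ell-\xi_{(\beta_1,b)}=\xi_{(-\infty,\beta_1)}$, one obtains pairwise disjoint intervals inside $(s,a)$ and $(-\infty,b)$, again contained in $\Delta_s=(-\infty,b)\cup(s,a)$.

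For the brackets I would use that $[\,\cdot\,,\,\cdot\,]$ is bilinear and antisymmetric, so in particular $[\xi_D,\xi_D]=0$, which removes overlaps directly. Take $J\subset(b,a)$. For $\mathbf x=\xi_{(\alpha_i,a)}$ the intervals $(\alpha_i,a)$ and $(-\infty,b)$ are already disjoint, in the configuration $(-\infty,b)<J<(\alpha_i,a)$, so Lemma~\ref{lem:bracket} gives $[\mathbf x,\ell]=[\xi_{(\alpha_i,a)},\xi_{(-\infty,b)}]>0\ge 0$. For $\mathbf y=\xi_{(\beta_j,b)}$, writing $\ell=\xi_{(-\infty,\beta_j)}+\xi_{(\beta_j,b)}$ and using $[\xi_{(\beta_j,b)},\xi_{(\beta_j,b)}]=0$ reduces $[\mathbf y,\ell]$ to $[\xi_{(\beta_j,b)},\xi_{(-\infty,\beta_j)}]$, where both intervals lie left of $J$ in the order $(-\infty,\beta_j)<(\beta_j,b)<J$, giving a negative value. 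The case $J\subset(\beta^t,b)$ is symmetric: for $\mathbf x=\xi_{(\alpha_i,a)}$ one splits $\ell=\xi_{(b,a)}=\xi_{(b,\alpha_i)}+\xi_{(\alpha_i,a)}$ and lands on $[\xi_{(\alpha_i,a)},\xi_{(b,\alpha_i)}]$ with $J<(b,\alpha_i)<(\alpha_i,a)$ (negative), while for $\mathbf y=\ell-\xi_{(-\infty,\beta_j)}$ antisymmetry gives $[\mathbf y,\ell]=-[\xi_{(-\infty,\beta_j)},\xi_{(b,a)}]$ in the disjoint configuration $(-\infty,\beta_j)<J<(b,a)$, which after the sign flip is positive.

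The routine part is the bookkeeping of which tiling interval sits on which side of $J$; the only genuine point is the observation that additivity of $\xi_D$, combined with the column-invariance of the Wronskian (respectively the bilinearity of the bracket), converts the nested interval families produced by Corollary~\ref{cor:XYl} into the disjoint families demanded by Corollary~\ref{cor:wron} and Lemma~\ref{lem:bracket}. I expect the main care to be needed in verifying, in each of the two regimes for $J$, that every interval arising after the reduction genuinely lies in $\Delta_s$ for all $s\in J$ — this uses $J\subset(\beta_n,\alpha_m)$ to place the $\beta$-intervals strictly left of $s$ and the $\alpha$-intervals strictly right of $s$ — and in keeping the orientation conventions of Lemma~\ref{lem:bracket} consistent under the antisymmetry of the bracket.
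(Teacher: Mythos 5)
Your proposal is correct and follows essentially the same route as the paper's proof: both use additivity of $\xi_D$ and column operations to turn the nested families into the pairwise disjoint tilings $\xi_{(\alpha_1,a)},\xi_{(\alpha_2,\alpha_1)},\dots,\xi_{(\alpha_m,\alpha_{m-1})},\xi_{(b,\alpha_m)}$ (resp.\ $\xi_{(\beta_n,b)}$) and $\xi_{(-\infty,\beta_1)},\xi_{(\beta_1,\beta_2)},\dots,\xi_{(\beta_{n-1},\beta_n)}$ so that Corollary~\ref{cor:wron} applies, and both reduce the brackets by bilinearity/antisymmetry to exactly the configurations $[\xi_{(\alpha_i,a)},\xi_{(b,\alpha_i)}]$, $[\xi_{(b,a)},\xi_{(-\infty,\beta_j)}]$, $[\xi_{(\alpha_i,a)},\xi_{(-\infty,b)}]$, $[\xi_{(\beta_j,b)},\xi_{(-\infty,\beta_j)}]$ handled by Lemma~\ref{lem:bracket}. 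The only cosmetic difference is that the paper explicitly notes $[\ell,\ell]=0$ to cover the case $\ell\in\mathscr X_{\mathbf P}$ (which does occur, e.g.\ when $J\subset(b,a)$), but this is trivially absorbed by the non-strict inequalities via the antisymmetry you already invoke.
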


\begin{proof}
The proof relies on Corollary~\ref{cor:wron}, Lemma~\ref{lem:bracket} and the fact the both the Wronskian $W$ and the bracket $[\,\cdot\, ,\,\cdot\, ]$ are alternating multilinear forms.

\textbf{Case $J\subset (\beta^t,b)$.} In view of Corollary~\ref{cor:XYl}, $J\subset(\beta_n,b)$ and for every $s\in J$ we have $\ell=\xi_{(b,a)}$ and
\begin{align*}
|W|&(\mathscr X_{\mathbf P}\cup\mathscr Y_{\mathbf P}\cup\{\ell\})(s)=|W|(\xi_{(\alpha_1,a)},\ldots,\xi_{(\alpha_m,a)},\ell-\xi_{(-\infty,\beta_1)}, \ldots, \ell-\xi_{(-\infty,\beta_n)},\ell)(s)\\
&=|W|(\xi_{(\alpha_1,a)},\ldots,\xi_{(\alpha_m,a)},\xi_{(b,a)},\xi_{(-\infty,\beta_1)}, \ldots, \xi_{(-\infty,\beta_n)})(s)\\
&=|W|(\xi_{(\alpha_1,a)},\xi_{(\alpha_2,\alpha_1)},\ldots,\xi_{(\alpha_m,\alpha_{m-1})},\xi_{(b,\alpha_m)},\xi_{(-\infty,\beta_1)},\xi_{(\beta_1,\beta_2)}, \ldots, \xi_{(\beta_{n-1},\beta_n)})(s).
\end{align*}
Since the intervals
\[(-\infty,\beta_1), (\beta_1,\beta_2), \ldots,(\beta_{n-1},\beta_n), (b,\alpha_m), (\alpha_m,\alpha_{m-1}),\ldots,(\alpha_2,\alpha_1), (\alpha_1,a)\]
are pairwise disjoint, by Corollary~\ref{cor:wron}, we have $|W|(\mathscr X_{\mathbf P}\cup\mathscr Y_{\mathbf P}\cup\{\ell\})(s)> 0$ for every $s\in J$.

By  Corollary~\ref{cor:XYl}, if $\mathbf x\in \mathscr X_{\mathbf P}$ then $\mathbf x=\xi_{(\alpha_i,a)}$ for some $1\leq i\leq m$ or $\mathbf x=\ell=\xi_{(b,a)}$.
Since $J<(b,\alpha_i)<(\alpha_i,a)$, by  Lemma~\ref{lem:bracket}, for every $s\in J$ we have
\[[\xi_{(\alpha_i,a)},\ell](s)=[\xi_{(\alpha_i,a)},\xi_{(b,\alpha_i)}](s)<0.\]
As $[\ell,\ell]=0$, we obtain that $[\mathbf x,\ell](s)\leq 0$ for all $\mathbf x\in \mathscr X_{\mathbf P}$ and $s\in J$.

By  Corollary~\ref{cor:XYl}, if $\mathbf y\in \mathscr Y_{\mathbf P}$ then $\mathbf y=\ell-\xi_{(-\infty,\beta_j)}$ for some $1\leq j\leq n$.
Since $(-\infty,\beta_j)<J<(b,a)$, by  Lemma~\ref{lem:bracket}, for every $s\in J$ we have
\[[\ell-\xi_{(-\infty,\beta_j)},\ell](s)=-[\xi_{(-\infty,\beta_j)},\xi_{(b,a)}](s)=[\xi_{(b,a)},\xi_{(-\infty,\beta_j)}](s)>0.\]
Therefore,  $[\mathbf y,\ell](s)> 0$ for all $\mathbf y\in \mathscr Y_{\mathbf P}$ and $s\in J$.

\textbf{Case $J\subset (b,a)$.} In view of Corollary~\ref{cor:XYl}, $J\subset(b,\alpha_m)$ and for every $s\in J$ we have $\ell=\xi_{(-\infty,b)}$ and
\begin{align*}
|W|&(\mathscr X_{\mathbf P}\cup\mathscr Y_{\mathbf P}\cup\{\ell\})(s)=|W|(\xi_{(\alpha_1,a)},\ldots,\xi_{(\alpha_m,a)},\xi_{(\beta_1,b)}, \ldots, \xi_{(\beta_n,b)},\xi_{(-\infty,b)})(s)\\
&=|W|(\xi_{(\alpha_1,a)},\xi_{(\alpha_2,\alpha_1)},\ldots,\xi_{(\alpha_m,\alpha_{m-1})},\xi_{(-\infty,\beta_1)},\xi_{(\beta_1,\beta_2)}, \ldots, \xi_{(\beta_{n-1},\beta_n)},\xi_{(\beta_n,b)})(s).
\end{align*}
Since the intervals
\[(-\infty,\beta_1), (\beta_1,\beta_2), \ldots,(\beta_{n-1},\beta_n), (\beta_n,b), (\alpha_m,\alpha_{m-1}),\ldots,(\alpha_2,\alpha_1), (\alpha_1,a)\]
are pairwise disjoint, by Corollary~\ref{cor:wron}, we have $|W|(\mathscr X_{\mathbf P}\cup\mathscr Y_{\mathbf P}\cup\{\ell\})(s)> 0$ for every $s\in J$.

By  Corollary~\ref{cor:XYl}, if $\mathbf x\in \mathscr X_{\mathbf P}$ then $\mathbf x=\xi_{(\alpha_i,a)}$ for some $1\leq i\leq m$ or $\mathbf x=\ell=\xi_{(-\infty,b)}$.
Since $(-\infty,b)<J<(\alpha_i,a)$, by  Lemma~\ref{lem:bracket}, for every $s\in J$ we have
\[[\xi_{(\alpha_i,a)},\ell](s)=[\xi_{(\alpha_i,a)},\xi_{(-\infty,b)}](s)>0.\]
As $[\ell,\ell]=0$, we obtain that $[\mathbf x,\ell](s)\geq 0$ for all $\mathbf x\in \mathscr X_{\mathbf P}$ and $s\in J$.

By  Corollary~\ref{cor:XYl}, if $\mathbf y\in \mathscr Y_{\mathbf P}$ then $\mathbf y=\xi_{(\beta_j,b)}$ for some $1\leq j\leq n$.
Since $(-\infty,\beta_j)<(\beta_j,b)<J$, by  Lemma~\ref{lem:bracket}, for every $s\in J$ we have
\[[\xi_{(\beta_j,b)},\ell](s)=[\xi_{(\beta_j,b)},\xi_{(-\infty,\beta_j)}](s)<0.\]
Therefore,  $[\mathbf y,\ell](s)< 0$ for all $\mathbf y\in \mathscr Y_{\mathbf P}$ and $s\in J$.
\end{proof}

\begin{proof}[Proof of Theorem~\ref{thm:main}]
By Proposition~\ref{prop:bilflow}, for every  $s\in(\min\{\beta^t,\beta^b\},b)\cup (b,a)$ the billiard flow on $\mathcal D$ restricted to $\mathcal S_s$
is topologically conjugated to the directional billiard flow in  directions $\pm\pi/4$, $\pm3\pi/4$ on the polygon $\sigma_s(S_s)\in\mathscr{P}$. Moreover,
for $s\in(\min\{\beta^t,\beta^b\},\min\{\beta^l,\beta^r\})$ the polygon $\sigma_s(S_s)$ is the union of two connected polygons
$\sigma_s(S^+_s)$ and $\sigma_s(S^-_s)$. To conclude the proof we need to show that for every open interval $J$
of the partition $\mathscr J$ and for a.e.\ $s\in J$ the flow $(\varphi^{\pi/4}_t)_{t\in\R}$ on $M(\sigma_s(S_s))$ (or $M(\sigma_s(S^\pm_s))$ if $J\subset (\min\{\beta^t,\beta^b\},\min\{\beta^l,\beta^r\})$) is uniquely ergodic.

By Theorem~\ref{thm:wronbrack}, for every $J\in\mathscr J$ the map $J\ni s\mapsto \sigma_s(S_s)\in\mathscr P$ (or $J\ni s\mapsto \sigma_s(S^{\pm}_s)\in\mathscr P$ if $J\subset (\min\{\beta^t,\beta^b\},\min\{\beta^l,\beta^r\})$) has the form $s\mapsto \mathbf P(s)$ with $|W|(\mathscr X_{\mathbf P}\cup\mathscr Y_{\mathbf P}\cup\{\ell\})(s)> 0$ for every $s\in J$ and
for all $\mathbf x\in \mathscr X_{\mathbf P}$, $\mathbf y\in \mathscr Y_{\mathbf P}$ and $s\in J$ we have
\begin{align*}
&[\mathbf x,\ell](s)\leq 0\ \text{ and }\ [\mathbf y,\ell](s)>0\ \text{ if }\  J\subset(\beta^t,b),\\
&[\mathbf x,\ell](s)\geq 0\ \text{ and }\ [\mathbf y,\ell](s)<0\ \text{ if }\  J\subset(b,a).
\end{align*}
Applying Theorem~\ref{thm:mainsurf} this gives the required conclusion.
\end{proof}

\section*{Acknowledgments}
The author would like to thank Corinna Ulcigrai and Barak Weiss for fruitful discussions in the initial stage of the project.
They both had an invaluable impact on solving the problem. We acknowledge  the  \emph{Centre International de Rencontres Math\'ematiques} in Luminy for hospitality where preliminary discussions were conducted.

\end{document}